\DeclareSymbolFont{cyrletters}{OT2}{wncyr}{m}{n}\DeclareMathSymbol{\Sha}{\mathalpha}{cyrletters}{"58}
\newtheorem{theorem}{Theorem}[section]
\newtheorem{lemma}[theorem]{Lemma}
\newtheorem{proposition}[theorem]{Proposition}
\newtheorem{definition}[theorem]{Definition}
\newtheorem{conjecture}[theorem]{Conjecture}
\Crefname{conjecture}{Conjecture}{Conjectures}
\theoremstyle{remark}
\newtheorem*{remark}{Remark}
\theoremstyle{plain}
\theoremstyle{plain}
\newcommand{\N}{\mathbb{N}}
\newcommand{\Div}{{\text {\rm div}}}
\newcommand{\Z}{\mathbb{Z}}
\newcommand\smod[1]{\ \left(\operatorname{mod} #1\right)}
\newcommand{\Q}{\mathbb{Q}}
\newcommand{\R}{\mathbb{R}}
\newcommand{\C}{\mathbb{C}}
\newcommand{\Jac}{\text {\rm Jac}}
\newcommand{{\D}}{\delta}
\newcommand{{\Conj}}{\text {\rm Conj}}
\newcommand{\Stab}{\operatorname{Stab}}
\newcommand{\eps}{\varepsilon}
\newcommand{\GL}{\operatorname{GL}}
\newcommand{\SL}{\operatorname{SL}}
\newcommand{\calH}{\mathscr{H}}
\newcommand{\SLZ}{\SL_2(\Z)}
\newcommand{\abcd}{\left(\begin{smallmatrix} a & b \\ c & d \end{smallmatrix}\right)}
\newcommand{\tr}{\operatorname{tr}}
\newcommand{\calQ}{\mathcal{Q}}
\newcommand{\rk}{\operatorname{rk}}
\newcommand{\Gal}{\operatorname{Gal}}
\newcommand{\calT}{\mathscr{T}}
\newcommand{\mult}{\operatorname{mult}}
\newcommand{\HH}{\mathfrak{H}}
\newcommand{\pr}{\operatorname{pr}}
\renewcommand{\t}{\mathbf{t}}
\newcommand{\ON}{\textsl{O'N}}
\newcommand{\Tr}{\operatorname{Tr}}
\newcommand{\vol}{\operatorname{vol}}
\newcommand{\Sel}{\operatorname{Sel}}
\newcommand{\Reg}{\operatorname{Reg}}
\newcommand{\ord}{\operatorname{ord}}
\newcommand{\Diff}{\operatorname{Diff}}
\newcommand{\g}{\mathscr{G}}
\numberwithin{equation}{section}
\numberwithin{table}{section}
\author{John F. R. Duncan, Michael H. Mertens, and Ken Ono}
\address{Department of Mathematics and Computer Science, Emory University, 400 Dowman Drive, Atlanta, GA 30322}
\email{john.duncan@emory.edu }
\address{Mathematisches Institut der Universt\"at zu K\"oln, Weyertal 86-90, D-50931 K\"oln} 
\email{mmertens@math.uni-koeln.de}
\address{Department of Mathematics and Computer Science, Emory University, 400 Dowman Drive, Atlanta, GA 30322}
\email{ono@mathcs.emory.edu }
\subjclass[2010]{11F22, 11F37}
\title{O'Nan Moonshine and Arithmetic}
\begin{document}

\begin{abstract} Answering a question posed by Conway and Norton in their seminal 1979 paper on moonshine,
we prove the existence of a graded infinite-dimensional module for the sporadic simple group of O'Nan, 
for which the McKay--Thompson series are weight $3/2$ modular forms.
The coefficients of these series may be expressed in terms of class numbers, traces of singular moduli, and central critical values of quadratic twists of weight 2 modular $L$-functions.
As a consequence, for primes 
$p$ dividing the order of the O'Nan group
we obtain congruences between 
O'Nan group character values and class numbers, $p$-parts of Selmer groups, and Tate--Shafarevich groups of
certain elliptic curves. This work represents the first example of moonshine involving arithmetic invariants of this type. 
\end{abstract}

\maketitle

\section{Introduction and Statement of Results}

The {sporadic} simple groups are the twenty-six exceptions to the classification \cite{MR2072045} of finite simple groups: those examples that aren't included in any of the natural infinite families.
It is natural to wonder where they appear, outside of the classification itself.

At least for the {\em monster}, being the largest of the sporadics, this question has an interesting answer. 
By the last decade of the last century, Ogg's observation \cite{Ogg_AutCrbMdl} on primes dividing the order of the monster, McKay's famous formula $$196884=1+196883,$$ and the much broader family of coincidences 
observed by Thompson \cite{Tho_FinGpsModFns,Tho_NmrlgyMonsEllModFn} and Conway--Norton \cite{CN79}, were proven by Borcherds \cite{borcherds_monstrous} to reflect the existence of a certain distinguished algebraic structure. This {\em moonshine module}, constructed by Frenkel--Lepowsky--Meurman \cite{FLMPNAS,FLMBerk,FLM}, admits a vertex operator algebra structure, has the monster as its full symmetry group, and has modular functions for traces. It is a cornerstone of {\em monstrous moonshine}, and indicates a pathway by which ideas from theoretical physics, and string theory in particular, may ultimately reveal a natural origin for the monster group and its curious connection to modularity. 

In addition to the monster itself, nineteen of the sporadic simple groups appear as quotients of subgroups of the monster. As such, we may expect that monstrous moonshine extends to them in some form. This is consequent upon the {\em generalized moonshine conjecture}, which was formulated by Norton \cite{generalized_moonshine} following preliminary observations of Conway--Norton \cite{CN79} and Queen \cite{MR628715}, and which has been recently proven in powerful work by Carnahan \cite{CarGM}. 

Certain more general analogues of monstrous moonshine have appeared in this century. In 2010, Eguchi--Ooguri--Tachikawa \cite{EOT} sparked a resurgence of interest in moonshine with their observation that the elliptic genus of a K3 surface---a trace function arising from a non-linear sigma model with K3 target---is, essentially, the sum of an indefinite theta function and a $q$-series whose coefficients are dimensions of modules for Mathieu's largest sporadic group, $M_{24}$. In fact,  this $q$-series is a mock modular form which, together with most of 
Ramanujan's mock theta functions, belongs to a family of distinguished examples \cite{omjt} arising from a family of finite groups. This is {\em umbral moonshine} \cite{UM,MUM,mumcor}, and the existence of corresponding umbral moonshine modules has been verified by Gannon \cite{Gannon} in the case of $M_{24}$, and in general by Griffin and two of the authors of this work \cite{DGO}. However, it must be noted that this theory is not yet on the same footing as monstrous moonshine, as suitable umbral counterparts to the moonshine module vertex operator algebra of Frenkel--Lepowsky--Meurman are not yet known in general. 

We refer the reader to \cite{FLM,MR2201600} for fuller discussions of monstrous moonshine, and to Gannon's book \cite{GannonMBM} for a broad perspective on the theory. The more recent review \cite{DGOSurvey} includes some umbral developments. We refer to \cite{PPV16,PPV17} for new work on the string theoretic interpretation of monstrous moonshine, and refer to \cite{ACH,CD17,DH,DO} for vertex algebraic constructions of some of the umbral moonshine modules.

Very recently, yet another form of moonshine has appeared in work of Harvey--Rayhaun \cite{HarveyRayhaun} which manifests a kind of half-integral weight counterpart to 
generalized moonshine for Thompson's sporadic group. This is known as {\em Thompson moonshine}. The existence of a corresponding module has been confirmed by Griffin and one of the authors \cite{GM16} (but in this case too, a vertex algebraic realization is yet to be found).

All the umbral groups are involved in the monster in some way, so we are left to wonder if there are counterparts to monstrous moonshine for the remaining six {\em pariah} sporadic groups: the \emph{Janko groups} $J_1, J_3,$ and $J_4$, the \emph{Lyons group} $Ly$, the {\em Rudvalis group} $Ru$, and the \emph{O'Nan group} $\ON$.  
Can moonshine shed light on these groups too? Conway and Norton asked this question 
(cf. p. 321 of \cite{CN79}) in their seminal 1979 paper:

\smallskip
\noindent
{\it ``Finally, we ask whether the sporadic simple groups that may not be involved in [the monster]... have moonshine properties.''}
\smallskip

\noindent
{This question is also Problem \#9 in the 1998 paper by Borcherds entitled ``Problems in Moonshine'' \cite{Borcherds98}.}

Rudvalis group analogues of the moonshine module were constructed in \cite{Duncan1,Duncan2}, but the physical significance of these structures is yet to be illuminated. In this work we present a new form of moonshine which reveals a role for the O'Nan group in arithmetic: as an organizing object for congruences between class numbers, $p$-parts of Selmer groups and Tate--Shafarevich groups of elliptic curves. (See e.g. \cite{Silverman1} or \cite{Tate} for background on elliptic curve arithmetic.) This is the first occurrence of moonshine of this type. Since $J_1$ is a subgroup of $\ON$ it suggests that at least two pariah groups play an active part in some of the deepest open questions in arithmetic. 

\subsection{Moonshine and Divisors}

Before describing our results in more detail we offer a conceptual number theoretic perspective which ties together some of the recent developments mentioned above. Suppose that $G$ is one of the finite groups appearing in the aforementioned cases of moonshine. 
Then we have an infinite-dimensional graded $G$-module, say $V^{G}$, which manifests a collection of modular forms,
one for each conjugacy class. For monstrous, umbral, and Thompson moonshine we have 
$$
V^G=\bigoplus_m V^G_m\ \  \overset{\rm moonshine}{\xrightarrow{\hspace{1in}}} \ \ ( f_{[g]})\in 
\begin{cases}  \ \bigoplus\limits_{[g]\in \Conj(G)} M_0^{!}(\Gamma_{[g]}) \ \ \ \ \ &{\text {\rm monstrous}}\\
\ \ \\
\bigoplus\limits_{[g]\in \Conj(G)} H_{\frac{1}{2}}(\Gamma_{[g]}) \ \ \ \ \ &{\text {\rm umbral, Thompson}}.
\end{cases}
$$
The defining feature of the $f_{[g]}$ is that their $m^{\rm th}$ coefficients equal the
graded traces $\tr(g | V^{G}_m)$.

In monstrous moonshine, the $f_{[g]}$ are Hauptmoduln for genus 0 groups $\Gamma_{[g]}$
(essentially level $o(g)$ congruence subgroups).
At the cusp $\infty$, they have Fourier expansion 
$$
f_{[g]}=q^{-1} + O(q)
$$
(note $q:=e^{2\pi i \tau}$ throughout), and are holomorphic at other cusps. In particular, this means that
 $\Div(f_{[g]})=cz-\infty$ for some $z\in X(\Gamma_{[g]})$ and some integer $c$. 
In contrast, the  $f_{[g]}$ in umbral and Thompson moonshine are not functions on modular curves, so it does not generally make sense to consider their divisors.
Instead, they are weight 1/2 harmonic Maass forms (with multiplier) for $\Gamma_{[g]}$, which means
that the McKay--Thompson series are generally {\it mock modular forms}, the holomorphic parts of the $f_{[g]}$.
Although they are not functions on these modular curves, it turns out that they actually encode even more information about divisors on
$X(\Gamma_{[g]})$.
For each discriminant $D$,
there is a map $\Psi_D$ for which
$$
V^{G} =\bigoplus_m V^G_m\ \ \overset{\rm moonshine}{\xrightarrow{\hspace{1in}}}\ \  (f_{[g]})
\ \  \overset{\Psi_D}{\xrightarrow{\hspace{.5in}}}
\ \ \ (\Psi_D(f_{[g]})) \in \bigoplus\limits_{[g]\in \Conj(G)} \mathcal{K}(\Gamma_{[g]}),
$$
where $\mathcal{K}(\Gamma_{[g]})$ is the field of modular functions for $\Gamma_{[g]}$. The $\Psi_D(f_{[g]})$ are {\it generalized Borcherds products} as defined by Bruinier and one of the authors \cite{BruinierOno}. They are meromorphic modular functions with a discriminant $D$ Heegner divisor, and their fields of definition are dictated by the Fourier coefficients of the $f_{[g]}$.

As the preceding discussion  illustrates, monstrous, umbral, and Thompson moonshine are (surprising) phenomena in which
a single infinite-dimensional graded $G$-module organizes information about divisors on products of modular curves that are indexed by the conjugacy classes of $G$. Moreover, the levels of these modular curves are (essentially) the orders of elements in these classes.
In the case of monstrous moonshine, the divisors are simple: they are of the form $cz-\infty$. In umbral and Thompson moonshine, we obtain Heegner divisors on $X(\Gamma_{[g]})$. 

The appearance of Heegner divisors recalls the seminal work of
Zagier \cite{Zag02} on traces of singular moduli on $X_0(1)$.
Loosely speaking, Zagier proved that the generating function for such traces in $D$-aspect can be
weight 3/2 weakly holomorphic modular forms. One of his motivations was to offer a classical perspective on
special cases of Borcherds' work \cite{Borcherds95}   on infinite product expansions of modular forms with Heegner divisor.

Although Zagier's paper has inspired too many papers to mention, we highlight an important note by
Gross \cite{Gross}. Gross observed that these types of theorems could be recast in terms of
  {\it generalized Jacobians} with cuspidal moduli. 
In particular, the generalized Jacobian of $X_0(1)$ with respect to the cuspidal divisor $2(\infty)$
is isomorphic to the additive group, and so the sum of the conjugates of Heegner
points in the generalized Jacobian is equal to the trace of their modular invariants.

Here we adopt this perspective.  Although we do not directly apply these results in this work, our view is that the McKay--Thompson series presented here should be  viewed in this way, as generating functions for traces of singular moduli and as functionals on Heegner divisors.
This interpretation is an extension of the celebrated theorem of Gross--Kohnen--Zagier \cite{GKZ} which asserts that the generating function for Heegner divisors on $X_0(N)$
are weight 3/2 cusp forms with values in the Jacobian of $X_0(N)$. By work of Waldspurger \cite{W1,W2} this earlier theorem can be thought of as a result on central
critical values of quadratic twists of weight 2 modular $L$-functions.

\subsection{Main Results}

In view of these developments, it is natural to seek weight 3/2 moonshine. One can loosely think of this as the moonshine obtained by summing weight 1/2 moonshine in $D$-aspect (e.g. umbral and Thompson moonshine), where the resulting
McKay--Thompson series are generating functions for the arithmetic of Heegner divisors.
 Namely, we seek moonshine of the form
$$
V^G=\bigoplus_m V^G_m\ \  \overset{\rm moonshine}{\xrightarrow{\hspace{1in}}} \ \ ( f_{[g]})\in 
\bigoplus\limits_{[g]\in \Conj(G)} H_{\frac{3}{2}}(\Gamma_{[g]})\otimes \Jac(X(\Gamma_{[g]})),
$$
where $\Jac(X(\Gamma_{[g]})$ denotes a suitable generalized Jacobian of $X(\Gamma_{[g]})$.
In such moonshine, the $f_{[g]}$ will be generating functions for suitable functionals over Heegner divisors. 
Their coefficients will be sums of class numbers, traces of singular moduli, and square-roots of central critical values of $L$-functions of quadratic twists of weight 2 modular forms.

Here we establish the first
example of moonshine of this type, and it is pleasing that pariah sporadic groups appear. We prove moonshine for  the O'Nan group $\ON$, a group discovered in 1973
as part of the flurry of activity related to the classification of finite simple groups \cite{Onan} and shown not to be involved in the monster by Griess \cite[Lemma 14.5]{Griess82}. This group was first constructed by Sims (cf. \cite[p. 421]{Onan}), and Ryba \cite{Ryba} later gave an alternative construction. It has order $\# \ON=2^9 \cdot 3^4\cdot 5\cdot 7^3 \cdot 11\cdot 19 \cdot 31$, and it has 30 conjugacy classes. It contains the first Janko group $J_1$, also not involved in the monster \cite{WilsonJ1}, as a subgroup.

\begin{theorem}\label{thm:Exists} There is an infinite-dimensional virtual graded $\ON$-module
$$
W:=\bigoplus_{0<m\equiv 0, 3\pmod{4}} W_m
$$
and weight 3/2 modular forms $\left \{F_{1A}, F_{2A},\dots, F_{31A}, F_{31B}\right\}$, one for each conjugacy class, with the property that
$$
F_{[g]}(\tau)= -q^{-4}+2+\sum_{0<m\equiv 0, 3\pmod{4}} \tr( g|W_m)q^m.
$$
Moreover, each $F_{[g]}$ is on the group $\Gamma_0(4o(g))$, with a non-trivial character in case $o(g)=16$, and satisfies the Kohnen plus space condition.
\end{theorem}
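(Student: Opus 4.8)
The plan is to construct the module $W$ by first pinning down the McKay--Thompson series $F_{[g]}$ as explicit weight $3/2$ mock modular forms on $\Gamma_0(4\,o(g))$ in the Kohnen plus space, and then checking that the putative graded characters they encode are genuine (virtual) characters of $\ON$ in every degree. Concretely, I would begin from the principal series $F_{1A}$, which must be the generating function whose coefficients are the dimensions $\dim W_m$. Motivated by the discussion connecting weight $3/2$ forms to traces of singular moduli and Hurwitz class numbers (\`a la Zagier, Gross--Kohnen--Zagier, and Bruinier--Li), the natural candidate for $F_{1A}$ is (a normalization of) Zagier's weight $3/2$ nonholomorphic Eisenstein series / the class-number generating function, whose holomorphic part begins $-q^{-4}+2+\cdots$ after fixing the principal part to match the $-q^{-4}+2$ in the statement. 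One verifies directly that this form lies in $H_{3/2}(\Gamma_0(4))$ and satisfies the plus-space condition.

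Next I would produce the remaining $F_{[g]}$. For each conjugacy class $[g]$ with $n=o(g)$, the form $F_{[g]}$ should be a weight $3/2$ mock modular form on $\Gamma_0(4n)$ in the plus space, with the same principal part $-q^{-4}+2$ at the cusp $\infty$ and prescribed (mild) behavior at the other cusps, so that the ``eta-quotient-twisted'' structure familiar from Thompson and umbral moonshine persists. The space of such forms with a fixed principal part is finite-dimensional, so each $F_{[g]}$ is determined by finitely many free parameters; I would fix these by demanding that the combination of the shadow (a weight $1/2$ theta series) and the holomorphic part be expressible via generating functions for traces of singular moduli, Hurwitz class numbers $H(m)$, and central critical values $L(E_d,1)$ of quadratic twists of the relevant weight $2$ newforms of level dividing $4n$ --- this is exactly the shape predicted in the Main Results discussion. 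In practice this means writing each $F_{[g]}$ as a specific linear combination of Eisenstein-type and cuspidal-type weight $3/2$ pieces whose coefficients are these arithmetic invariants.

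The final and most substantial step is the group theory: showing that the formal $q$-series $\sum_m \big(\sum_{[g]} \overline{\chi_i([g])}\,[F_{[g]}]_m\big) q^m$ has \emph{non-negative integer} (or at least integer, for a virtual module) coefficients for every irreducible character $\chi_i$ of $\ON$, i.e. that the column vector of coefficients $\big([F_{[g]}]_m\big)_{[g]}$ lies in the $\Z$-span of the character table for all $m$. For this I would use the orthogonality relations to define the multiplicities $\mathrm{mult}_i(m)=\langle \tr(\cdot|W_m),\chi_i\rangle$, prove via the explicit arithmetic formulas that the ``head'' coefficients (small $m$, and the principal part) give integers, and then invoke a modularity/recursion argument: the $F_{[g]}$ all transform under compatible congruence groups, so their coefficients satisfy Hecke-type recursions, and integrality propagates. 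To upgrade from ``virtual'' toward effectivity one would, as in \cite{DGO} and \cite{GM16}, appeal to an effective Rademacher-type estimate showing $\mathrm{mult}_i(m)>0$ for $m$ large, leaving only finitely many coefficients to check by computer.

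The hard part will be this last step: verifying that the arithmetically-defined coefficient vectors are \emph{simultaneously} consistent with the $\ON$ character table across all $30$ classes, including the two rationally-conjugate classes $31A,31B$ (which force a subtle algebraic, not merely rational, structure on the relevant Fourier coefficients). Equivalently, the obstacle is proving that the finitely many normalization constants chosen in Step 2 can be chosen \emph{globally} so that all character inner products are integral --- a finite but delicate linear-algebra-over-$\Z$ problem intertwined with the analytic number theory of the $L$-values $L(E_d,1)$. Everything else (modularity of each $F_{[g]}$, the plus-space condition via Kohnen's projection, the level $4\,o(g)$) is, by contrast, a routine check once the candidates are written down.
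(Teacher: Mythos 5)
Your overall architecture (construct the $F_{[g]}$ as weight $3/2$ plus-space mock modular forms with principal part $-q^{-4}+2$, then prove integrality of the character-table inner products and positivity for large $m$ via Rademacher-type estimates plus a finite check) matches the paper's strategy in outline, but two of your steps contain genuine gaps. First, your construction of the $F_{[g]}$ is underdetermined: the paper builds each one explicitly as $-R^{[-4]}_{\frac32,4o(g)}+2R^{[0]}_{\frac32,4o(g)}$ (projected to the plus space when $o(g)$ is odd), so that the principal part at $\infty$, the behavior at all other cusps, and the Kloosterman-sum coefficient formulas are pinned down; uniqueness up to cusp forms then follows from the Bruinier--Funke pairing, and for the six orders $o(g)\in\{11,14,15,19,28,31\}$ the cuspidal correction is fixed by matching $a_{[g]}(3),a_{[g]}(4),a_{[g]}(7)$ to specific character combinations chosen precisely so that the congruences $a_{[g]}(n)\equiv a_{[g']}(n)\pmod p$ (for $o(g')=o(g)/p$) can hold. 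Your ``demand expressibility via traces of singular moduli, class numbers and $L$-values'' does not single out these corrections, and the $L$-values in fact play no role in defining the forms --- they enter only later, via Kohnen's Waldspurger-type formula, to bound cusp form coefficients for the positivity step and in the arithmetic applications.

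Second, and more seriously, your integrality argument --- ``Hecke-type recursions propagate integrality'' --- would fail: there is no recursion that forces the multiplicities $\operatorname{mult}_i(m)$ to be integers. The paper's mechanism is different: it proves explicit linear congruences $F_{[g_1]}\equiv F_{[g_2]}\pmod{p^c}$ whenever the orders differ by a power of $p\mid\#\ON$, verified by multiplying by a fixed weight $25/2$ cusp form and applying the Sturm bound, with a separate $2$-adic argument for the genuinely mock case $o(g)=16$ (using that $F_{16ABCD}-8\calH(4\tau)+32\calH(16\tau)$ is weakly holomorphic and the parity properties of Hurwitz class numbers); a finite linear-algebra computation with the character table then shows the multiplicity matrix is $p$-integral for every $p\mid\#\ON$, hence integral. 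You gesture at the ``linear-algebra-over-$\Z$ problem'' but miss that the congruences among the series are the key input, and you treat the shadow analysis as routine, whereas distinguishing the modular orders from $o(g)=16$ (via the Serre--Stark theorem and the vanishing of Kloosterman sums) is exactly what makes the $2$-adic congruences provable. Finally, in the positivity step you would also need explicit (not merely asymptotic) bounds on the coefficients of the cuspidal corrections, which the paper obtains by computing Petersson norms via Kohnen's formula; your sketch only bounds the Rademacher part.
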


\begin{remark}
There is an alternative to \Cref{thm:Exists} in which the $F_{[g]}$ have trivial characters for all $g$, but are mock modular for $o(g)=16$. That formulation featured in an earlier version of this work. The present statement is motivated by cohomological considerations and related structures in the representation theory of vertex operator algebras, as we explain in more detail in \Cref{secProofclass}. We also characterize the $F_{[g]}$ precisely in \Cref{secProofclass}.
\end{remark}

\begin{remark}
In other prominent examples of moonshine (e.g. monstrous \cite{CN79} and umbral \cite{UM,MUM} moonshine) the McKay--Thompson series of a group element $g$ is a modular form (essentially) of level $o(g)$, but in this work the McKay--Thompson series $F_{[g]}$ have level $4o(g)$. This 
anomaly can be resolved by repackaging the $F_{[g]}$ as Jacobi forms as follows. 
For 
$g\in \ON$
set 
\begin{gather}\label{eqn:phig}
\varphi_{[g]}(\tau,z):=F_{[g],0}(\tfrac{\tau}{4})\theta_{1,0}(\tau,z)+F_{[g],1}(\tfrac{\tau}{4})\theta_{1,1}(\tau,z)
\end{gather} 
where $F_{[g],r}(\tau):=\sum_{m\equiv r\text{ mod } 2}\tr(g|W_m)q^m$ and $\theta_{1,r}(\tau,z):=\sum_{n\equiv r\text{ mod } 2}e^{2\pi i nz} q^{\frac{n^2}4}$. 
Then $\varphi_{[g]}$ 
is a weakly holomorphic Jacobi form of weight $2$ and index $1$ on $\Gamma_0(o(g))$, with a non-trivial character in case $o(g)=16$. 
For the sake of simplicity we have chosen to formulate our results in terms of the scalar-valued modular forms $F_{[g]}$ in this work. However, we note that one advantage of the Jacobi form formulation is that it illuminates an analogue of the Hauptmodul property of monstrous moonshine. Namely, each $\varphi_{[g]}$ has the property that it is uniquely determined, up to a cusp form, by the condition that it has growth of a certain form (independent of $g$) near the infinite cusp of $\Gamma_0(o(g))$, and vanishes at all other cusps. This follows from the proof of \Cref{thm:class}. It may be compared to monstrous moonshine, in which the McKay--Thompson series are uniquely determined up to constant functions by an analogous condition, and to umbral and Thompson moonshine, in which the McKay--Thompson series are uniquely determined by such a condition up to theta series (although in the umbral case almost all the relevant spaces of theta series vanish; cf. \cite{mumcor}).  It is the appearance of cusp forms that allows us to connect the O'Nan group to elliptic curve arithmetic.
\end{remark}

\begin{remark}
The module $W$ is {\em virtual} in the sense that some irreducible representations 
of $\ON$ occur with negative multiplicity in $W_m$ for some $m$. 
The proof of Theorem~\ref{thm:Exists} will show that only non-negative multiplicities appear for $m\notin\{7,8,12,16\}$.
So in fact we can replace $W$ with a non-virtual module for a small cost, by adding suitable multiples of weight $3/2$ unary theta functions (i.e. sums of the form $\sum_{n\in \Z}n\epsilon(n)q^{\lambda n^2}$ where $\epsilon$ is an odd periodic function and $\lambda$ is a positive rational) to the McKay--Thompson series $F_{[g]}$. This changes the module structure of $W_m$ when $m=vd^2$ for $v\in \{7,8,12,16\}$, for certain integers $d$, but it does not effect the validity of our other three main results, Theorems \ref{ClassNumberCongruences}, \ref{CongruencesSelmer}, and \ref{USCorollary}, for $-D<-16$. The price for such an adjustment to $W$ is the property that the McKay--Thompson series attached to $[g]$ have level $4o(g)$. It is this property which motivates us to focus on the particular module $W$ that appears in Theorem \ref{thm:Exists}.
\end{remark}

The $F_{[g]}$ will turn out to be expressible in terms of traces of singular moduli for
Hauptmoduln (cf. \Cref{secSing}), class numbers, and central critical $L$-values of quadratic twists of weight 2 modular forms (cf. \Cref{Lvalues}).
The Hauptmoduln which arise 
are for the
 genus 0 modular curves 
\begin{equation}\label{Genus0}
\{ X_0(N):  \ N=1,\dots,8, 10, 12,16\} \ \ \cup\ \ \{X_0^+(N): \ 
N=11, 14,15, 16, 19, 20, 28, 31,32\},
\end{equation}
where $X_0^+(N)$ is the modular curve corresponding to the extension of $\Gamma_0(N)$
by all the level $N$ Atkin--Lehner involutions.

\begin{remark}
Purely for the sake of curiosity we mention that it follows from the description of the dimensions of the graded components $W_m$ in terms of traces of singular moduli (cf. \Cref{appSing}) that
\[\dim W_{163}=\frac 12(\alpha^2+\alpha-393768),\]
where 
\[\alpha=\left\lceil e^{\pi\sqrt{163}}\right\rceil=\lceil 262537412640768743.999999999999250072...\rceil\]
denotes the Ramanujan constant. (This number was actually already discovered and studied by Hermite in 1859 \cite{Hermite}.)
\end{remark}

\begin{remark}
From \Cref{mults1,mults2,mults3} we see that $W_3$ is an irreducible $\ON$-module of dimension $26752$, and $W_4$ has three irreducible constituents, with dimensions $1$, $58311$ and $85064$. On the other hand the specialization $\varphi_{1A}(\tau,0)$ of \Cref{eqn:phig} is the derivative of the $J$ function, up to a scalar factor. 
This leads to the identity
$$
196884=5\cdot 1+ 2\cdot 26752 +58311+ 85064,
$$
where the summands on the right are dimensions of irreducible representations of $\ON$. 
Inspired by the moonshine module vertex operator algebra \cite{FLM} of Frenkel--Lepowsky--Meurman we may ask: is there a holomorphic vertex operator algebra with an action by $\ON$ that explains this coincidence? (See the second remark in \Cref{secProofclass} for some further related comments.)
\end{remark}

Armed with Theorem~\ref{thm:Exists}  and the explicit identities expressing the $F_{[g]}$ in terms
of singular moduli, class numbers and critical $L$-values,
it is natural to ask whether the infinite-dimensional $\ON$-module $W$ reveals arithmetic
information about the modular curves they organize, which include the positive genus curves
$$
\{X_0(11),\ X_0(14),\ X_0(15),\ X_0(19),\ X_0(20),\ X_0(28),\ X_0(31)\}
$$
related to the $X_0^+(N)$ in (\ref{Genus0}).
For example, are there interesting congruences modulo primes $p |\#\ON$ which relate the graded components $W_m$ to classical objects in number theory and arithmetic geometry?
This is indeed the case, and we now describe
 surprising congruences which relate graded dimensions and traces of
$W$ to class numbers and Selmer groups and Tate--Shafarevich groups of elliptic curves.

\begin{remark}
Suppose that $p$ is prime and $g_n$ (resp. $g_{np}$) are elements of $\ON$ with order $n$ (resp. $np$).
Then by \Cref{thm:Exists}, we have that $\tr(g_n|W_m)\equiv \tr(g_{np}|W_m)\pmod p$ for all $m$. 
In particular, if $o(g)=p$, then for all $m$ we have
$$
\dim W_m\equiv \tr(g|W_m)\pmod p.
$$
\end{remark}

The following theorem concerns congruences modulo small primes $p$ and ideal class groups of imaginary quadratic fields. Here and in the following,  we denote by $H(D)$ the Hurwitz class number of positive definite binary quadratic forms of discriminant $-D<0$ (cf. \Cref{secSing}). 

\begin{theorem}\label{ClassNumberCongruences}
Suppose that $-D<0$ is a fundamental discriminant. Then the following are true:

\begin{enumerate}
\item If $-D<-8$ is even and $g_2\in \ON$ has order 2, then
$$
\dim W_D\equiv \tr(g_2 |W_D)\equiv -24 H(D)\equiv 0\pmod{2^4}.
$$
\item If $p\in \{3, 5, 7\}$, $\left(\frac{-D}{p}\right)=-1$ and $g_p\in \ON$ has order $p$, then
$$
\dim W_D\equiv \tr(g_p |W_D)\equiv \begin{cases}
-24H(D)\pmod{3^2}\ \ \ \ \ \ &{\text {\rm if}}\ p=3,\\
-24H(D)\pmod{p}  \  \ \ \ \ \ &{\text {\rm if}}\ p=5, 7.
\end{cases}$$
\end{enumerate}
\end{theorem}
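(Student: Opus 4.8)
\medskip
\noindent\emph{Proof proposal.}
The plan is to reduce each of the three displayed congruences to a divisibility statement for Fourier coefficients of explicit weight $3/2$ forms, using the precise description of the $F_{[g]}$ promised in Section~\ref{secProofclass}. Write $\mathcal H(\tau)=-\tfrac1{12}+\sum_{D>0}H(D)q^{D}$ for Zagier's weight $3/2$ Eisenstein series. For $g$ of order $p\in\{2,3,5,7\}$ the curve $X_0(p)$ has genus $0$, so in the decomposition of $F_{[g]}$ into an Eisenstein part, a weakly holomorphic ``singular moduli'' part attached to the Hauptmodul of $X_0(p)$ (cf.\ Section~\ref{secSing}), and a cuspidal ``$L$-value'' part (cf.\ Section~\ref{Lvalues}), the cuspidal part vanishes; matching the constant term $2=-24\cdot(-\tfrac1{12})$ then forces the Eisenstein part to agree with $-24\mathcal H$ up to explicit correction terms whose Fourier coefficients at positive discriminants are divisible by $m_p$, where $m_2=2^4$, $m_3=3^2$, $m_5=5$, $m_7=7$. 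Thus $\tr(g_p\,|\,W_D)\equiv -24H(D)+t_p(D)\pmod{m_p}$, where $t_p(D)$ is the trace of the Hauptmodul of $X_0(p)$ over the CM points of discriminant $-D$, and likewise $\dim W_D=-24H(D)+t_1(D)$ with $t_1(D)$ a trace of singular moduli of $J=j-744$ on $X_0(1)$.

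Granting this, I treat the three congruences in turn. The outer congruence in part~(1), $-24H(D)\equiv0\pmod{2^4}$, is pure genus theory: an even fundamental discriminant $-D$ with $|D|>8$ has at least two prime-discriminant divisors (the prime $2$ and an odd prime), so the $2$-rank of the class group of $\Q(\sqrt{-D})$ is positive, $h(-D)$ is even, and $24H(D)=24h(-D)$ is divisible by $16$. The congruence $\dim W_D\equiv\tr(g_p\,|\,W_D)$ modulo $p$ is, for odd $p$, the elementary character congruence $\tr(g_p\,|\,V)\equiv\tr(1\,|\,V)\pmod p$, valid for every virtual $\ON$-module $V$ and every $g_p$ of order $p$; combined with Theorem~\ref{thm:Exists} this settles the $p=5,7$ cases outright and gives the ``mod $3$'' reduction of the $p=3$ case. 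Finally, for $\tr(g_p\,|\,W_D)\equiv-24H(D)$: when $\left(\frac{-D}{p}\right)=-1$ the prime $p$ is inert in $\Q(\sqrt{-D})$, so there is no CM point of discriminant $-D$ on $X_0(p)$ (there being no ideal of norm $p$), whence $t_p(D)=0$ and $\tr(g_p\,|\,W_D)\equiv-24H(D)\pmod{m_p}$. It remains to upgrade two things: the $p=2$ half of part~(1), where the character congruence yields only ``mod $2$'', and the passage from ``mod $3$'' to ``mod $9$'' for $p=3$. For these I would examine $F_{1A}-F_{2A}$ and $F_{1A}-F_{3A}$ directly: each has trivial principal part, hence lies in a small space of (weakly) holomorphic weight $3/2$ forms in the Kohnen plus space on $\Gamma_0(8)$, resp.\ $\Gamma_0(12)$, and one checks that its Fourier coefficients at the discriminants in question are divisible by $16$, resp.\ $9$; equivalently, one invokes the known $2$- and $3$-adic divisibility theorems for traces of singular moduli (Ahlgren--Ono and subsequent work), together with their level-$p$ analogues.

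The main obstacle is precisely this last step: establishing the \emph{sharp} moduli $2^4$ and $3^2$, i.e.\ that the weakly holomorphic weight $3/2$ parts $t_1,t_2,t_3$ have $D$-th coefficient divisible by $16$, resp.\ $9$, for the relevant $D$. This is genuine arithmetic input about traces of singular moduli rather than a formal consequence of Theorem~\ref{thm:Exists}, and I would carry it out either by reducing to the identification of a handful of explicit (weakly) holomorphic weight $3/2$ forms and verifying divisibility of their coefficients up to the Sturm bound, or by citing the relevant $2$- and $3$-adic results. A secondary technical point is to pin down the Eisenstein part of each $F_{[g_p]}$ from the construction in Section~\ref{secProofclass}, so as to guarantee the correction terms are divisible by $m_p$. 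Everything else --- the genus-theoretic evenness of $h(-D)$, the vanishing $t_p(D)=0$ for $p$ inert in $\Q(\sqrt{-D})$, and the elementary character congruence --- is routine.
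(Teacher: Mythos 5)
Your treatment of the odd primes is essentially the paper's argument: for $p\in\{3,5,7\}$ one writes $F_{[g_p]}$ explicitly as $\calT^{(p)}+c_p\calH^{(1)}-c_p\calH^{(p)}$ with $c_p\equiv -24\pmod{m_p}$ (here $c_3=12$, $c_5=6$, $c_7=4$), observes that $\left(\frac{-D}{p}\right)=-1$ forces $\calQ_{-D}^{(p)}=\emptyset$, hence $\Tr_4^{(p)}(D)=H^{(p)}(D)=0$ (the paper's \Cref{lemHeegner}, identical to your ``no Heegner point'' step), and supplies the relation $\dim W_D\equiv \tr(g_p|W_D)$ modulo $p$ (your character-theoretic argument, which the paper also records in a remark) and modulo $3^2$ via a Sturm-bound congruence $F_{1A}\equiv F_{3A}$, exactly as you propose for the mod $9$ upgrade. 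Two of your variations are fine and even slightly cleaner: the genus-theory proof that $h(-D)$ is even (the paper uses $24H(D)=2r_3(D/4)$ and $8\mid r_3(D/4)$ instead), and the elementary trace congruence in place of the explicit congruences of \Cref{AppCong}. One caveat: your claim that matching the constant term $2$ ``forces'' the $\calH^{(1)}$-coefficient to be $\equiv-24\pmod{m_p}$ is not a proof, since $\calH^{(p)}$ also contributes to the constant term; the congruence $c_p\equiv-24\pmod{m_p}$ has to be read off the explicit expressions (it is true for $p=3,5,7$, but see below).

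The genuine gap is part (1), $p=2$. There the hypothesis is that $-D$ is even, so $2$ is ramified, not inert: $\left(\frac{-D}{2}\right)=0$, the set $\calQ_{-D}^{(2)}$ is nonempty (e.g.\ $[2,2,3]$ for $D=20$), and neither $\Tr_4^{(2)}(D)$ nor $H^{(2)}(D)$ vanishes. Moreover your asserted decomposition fails at the modulus $2^4$: in $F_{2A}=\calT^{(2)}+12\calH^{(1)}-12\calH^{(2)}$ the coefficient $12$ is \emph{not} congruent to $-24$ modulo $16$, so ``Eisenstein part $=-24\calH$ up to corrections divisible by $m_2$'' is false, and the quantities you call $t_1,t_2$ being divisible by $16$ is not the right target either. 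What is actually needed, and what your plan never produces, is the statement $\dim W_D\equiv\tr(g_2|W_D)\equiv 0\pmod{2^4}$ for $D\equiv 4,8\pmod{16}$: your check of $F_{1A}-F_{2A}$ modulo $16$ only identifies the two traces with each other, and genus theory only handles $-24H(D)$; nothing links either side to $0$ (equivalently to $-24H(D)$) modulo $16$. The paper closes exactly this point by a direct Sturm-bound verification that the coefficients of $F_{1A}$ and $F_{2A}$ on the exponents $D\equiv 4,8\pmod{16}$ vanish modulo $16$; a citation of Ahlgren--Ono-type $2$-adic divisibility for traces of singular moduli would have to be matched to the specific combination of level $1$ and level $2$ traces and class numbers occurring here, which your proposal does not do. So part (1) needs this additional modular-form congruence (or an equivalent computation) before the argument is complete.
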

\begin{remark} Systematic congruences
 which assert for $\left(\frac{-D}{p}\right)=-1$ that
$$
\dim W_D\equiv -24 H(D)\pmod{p}
$$
do not seem to hold for  $p\geq 17$. However, this congruence holds for
 $p=13$, a bonus because $13\nmid \#\ON$.
\end{remark}

\begin{remark} 
As the proof of \Cref{ClassNumberCongruences} will reveal, it holds true that if $-D<-8$ is an even fundamental discriminant, then $H(D)$ is even, and $\dim W_D\equiv 0\pmod{2^4}$.
\end{remark}

In view of \Cref{ClassNumberCongruences}, it is natural to consider the 
primes $p= 11, 19$ and $31$ which also divide $\#\ON$. For these primes,  a refinement of the congruences
above is necessary.
In particular, for the primes 11 and 19 we obtain congruences which relate $\dim W_D$ to Selmer groups
and Tate--Shafarevich groups of elliptic curves (cf. \cite[Chapter X]{Silverman1}). 

Let $E/\Q$ be an elliptic curve given by 
$$
   E: \ \ \  y^2+a_1xy+a_3y=x^3+a_2x^2+a_4 x+a_6 
$$
where $a_1, a_2, a_3, a_4, a_6 \in \Z$. For a fundamental discriminant $D$, let
$E(D)$ denote its $D$-quadratic twist, and let $\rk(E(D))$ denote the Mordell--Weil rank of $E(D)$ over $\Q$.
The $\ON$-module  $W$ encodes deep information about the Selmer and Tate--Shafarevich groups of
the quadratic twists of elliptic curves with conductor 11, 14, 15, and 19.
To make this precise, suppose that $\ell$ is an odd prime. Then for each curve $E(D)$ we have the short exact sequence 
$$
  1\rightarrow E(D)/\ell E(D) \rightarrow \Sel(E(D))[\ell]
 \rightarrow \Sha(E(D))[\ell] \rightarrow 1,
$$
where $\Sel(E(D))[\ell]$ is the $\ell$-Selmer group of $E(D)$, and
$\Sha(E(D))[\ell]$ denotes the elements
of the Tate--Shafarevich group $\Sha(E(D))$ with order dividing $\ell$.

For $p=11$ and $19$, we let $E_p/\Q$ be the $\Gamma_0(p)$-optimal elliptic curves given by the Weierstrass models
\begin{displaymath}
\begin{split}
E_{11}&: \ \ y^2+y=x^3-x^2-10x-20,\\
E_{19}&: \ \ y^2+y=x^3+x^2-9x-15
\end{split}
\end{displaymath}
(cf. \cite[\href{http://www.lmfdb.org/EllipticCurve/Q/11.a2}{Elliptic Curve 11.a2}, \href{http://www.lmfdb.org/EllipticCurve/Q/19.a2}{Elliptic Curve 19.a2}]{LMFDB}).
We obtain the following congruence relating the graded dimension $\dim W_D$ to class numbers, and Selmer groups and Tate--Shafarevich groups of such twists.

\begin{theorem}\label{CongruencesSelmer}
Assume the Birch and Swinnerton-Dyer Conjecture. If $p=11$ or $19$ and $-D<0$ is a fundamental discriminant for which $\left(\frac{-D}{p}\right)=-1$, and $g_p\in \ON$ has order $p$, then the following are true.

\begin{enumerate}
\item We have that $\Sel(E_p(-D))[p]\neq \{0\}$ if and only if

$$
\dim W_D\equiv \tr(g_p | W_D) \equiv -24H(D)\pmod{p}.
$$

\item Suppose  that $L(E_p(-D),1)\neq 0$. Then we have that $\rk(E(-D))=0$. Moreover, we have $p|  \# \Sha(E_p(-D))$ if and only if
$$
\dim W_D\equiv \tr(g_p | W_D)\equiv -24H(D)\pmod{p}.
$$
\end{enumerate}
\end{theorem}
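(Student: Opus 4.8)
The plan is to link the congruence $\dim W_D\equiv -24H(D)\pmod p$ to a $p$-divisibility statement for a single Waldspurger coefficient, and then to read off the asserted conditions on $\Sel(E_p(-D))[p]$ and $\Sha(E_p(-D))$ via the Birch and Swinnerton-Dyer formula. Since $g_p$ has order $p$, the congruence $\dim W_m\equiv\tr(g_p|W_m)\pmod p$ holds for all $m$ (see the remark immediately preceding \Cref{ClassNumberCongruences}), so it suffices to analyze $\tr(g_p|W_D)=[q^D]F_{pA}$. By the explicit description of $F_{pA}$ obtained in \Cref{secProofclass} together with \Cref{Lvalues}, this coefficient is a linear combination of (i) a trace of singular moduli for the $X_0^+(p)$-Hauptmodul, (ii) the $D$-th coefficient of Zagier's weight $3/2$ Hurwitz class number series, which contributes the term $-24H(D)$, and (iii) $\lambda_p\,c_p(D)$, where $c_p(D):=[q^D]g_{E_p}$ and $g_{E_p}$ is the weight $3/2$ cusp form in the Kohnen plus space of level $4p$ whose Shimura lift is the newform attached to $E_p$; this cusp form is unique up to scalar since $\dim S_2(\Gamma_0(p))=1$ for $p=11$ and $p=19$. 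The decisive point is that when $\left(\tfrac{-D}{p}\right)=-1$ the prime $p$ is inert in $\Q(\sqrt{-D})$, so there are no Heegner points of discriminant $-D$ on $X_0(p)$ and contribution (i) vanishes; a direct computation with the formulas of \Cref{secProofclass} then gives $\tr(g_p|W_D)\equiv -24H(D)+\lambda_p\,c_p(D)\pmod p$ with $p\nmid\lambda_p$. Hence, for $\left(\tfrac{-D}{p}\right)=-1$,
\[
\dim W_D\equiv -24H(D)\pmod p\quad\Longleftrightarrow\quad p\mid c_p(D).
\]

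It remains to interpret $p\mid c_p(D)$. By the Waldspurger--Kohnen--Zagier formula in the sharp form recorded in \Cref{Lvalues}, for $\left(\tfrac{-D}{p}\right)=-1$ one has $c_p(D)^2=\kappa_p\cdot\sqrt{D}\,L(E_p(-D),1)/\langle f_{E_p},f_{E_p}\rangle$, where $\kappa_p$ is a fixed positive rational that is a $p$-adic unit. Thus $c_p(D)=0$ if and only if $L(E_p(-D),1)=0$. When $L(E_p(-D),1)\neq 0$, the Birch and Swinnerton-Dyer formula (which we are assuming) gives $\rk(E_p(-D))=0$ and expresses $L(E_p(-D),1)$ in terms of $\#\Sha(E_p(-D))$, the Tamagawa numbers, the real period and the torsion of $E_p(-D)$; since that torsion is prime to $p$ (a quadratic twist of $E_{11}$ or $E_{19}$ has torsion of $2$-power order, as $E_{11}(\Q)\cong\Z/5\Z$ and $E_{19}(\Q)\cong\Z/3\Z$), the Tamagawa numbers are $p$-units (at $p$ the reduction is multiplicative with Tamagawa number in $\{1,2,3,5\}$, and the additive Tamagawa numbers are at most $4$), the $E_p$ have Manin constant $1$, and $p\nmid D$, one concludes that $p\mid c_p(D)$ if and only if $p\mid\#\Sha(E_p(-D))$ (using that $\#\Sha$ is a perfect square). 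Altogether, $p\mid c_p(D)$ is equivalent to: $L(E_p(-D),1)=0$, or else $L(E_p(-D),1)\neq 0$ and $p\mid\#\Sha(E_p(-D))$.

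To finish, if $L(E_p(-D),1)=0$ then by BSD $\rk(E_p(-D))\geq 1$, so $\Sel(E_p(-D))[p]\neq\{0\}$; moreover $c_p(D)=0$, hence $p\mid c_p(D)$ and $\dim W_D\equiv -24H(D)\pmod p$ by the first paragraph, so both conditions in statement (1) hold. If $L(E_p(-D),1)\neq 0$ then by BSD $\rk(E_p(-D))=0$ and $\Sha(E_p(-D))$ is finite; since $E_p(-D)(\Q)$ is torsion of order prime to $p$, the exact sequence in the statement collapses to $\Sel(E_p(-D))[p]\cong\Sha(E_p(-D))[p]$, so $\Sel(E_p(-D))[p]\neq\{0\}$ is equivalent to $p\mid\#\Sha(E_p(-D))$, and hence (by the previous paragraph) to $\dim W_D\equiv -24H(D)\pmod p$. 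This proves (1), and (2) is exactly the content of this last case. The main obstacle is obtaining the sharpness needed in the Waldspurger--Kohnen--Zagier step: pinning down the ratio between $\langle f_{E_p},f_{E_p}\rangle$ and the N\'eron period of $E_p(-D)$, and checking that the resulting constant $\kappa_p$ --- and likewise the scalar $\lambda_p$ in the decomposition of $F_{pA}$ --- are $p$-adic units. The modular-forms bookkeeping (the decomposition of $F_{pA}$ and the vanishing of the Heegner trace at inert discriminants) is the structural backbone but is routine given \Cref{secProofclass} and the classical theory of traces of singular moduli on $X_0^+(p)$.
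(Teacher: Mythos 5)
Your overall route is the same as the paper's: decompose the coefficient of $F_{pA}$ into a Hauptmodul trace, class number terms, and a multiple of the weight $3/2$ cusp form; kill the Heegner contributions at inert discriminants via \Cref{lemHeegner}; translate $p\mid c_p(D)$ into a statement about $L(E_p(-D),1)$ via Kohnen--Waldspurger; and finish with BSD plus the observation that torsion and Tamagawa numbers are prime to $p$. (Two small points: the explicit formulas in \Cref{appSing} also contain a level-$p$ generalized class number term $\calH^{(p)}$, which vanishes for the same Heegner reason, and the unit scalars you call $\lambda_p$ are simply the explicit coefficients $-\tfrac45$ and $\tfrac43$ of $\g^{(11)}$ and $\g^{(19)}$ there, so that part is immediate.)

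The genuine gap is the step you yourself flag as the ``main obstacle'': you assert, but do not prove, that the constant $\kappa_p$ in $c_p(D)^2=\kappa_p\,\sqrt{D}\,L(E_p(-D),1)/\langle\cdot,\cdot\rangle$ is a $p$-adic unit. Without controlling the $p$-adic valuation of this constant (which involves $\pi$, Petersson norms of the weight $2$ and weight $3/2$ forms, the period $\omega_-(E_p)$, and the Manin constant), the equivalence $p\mid c_p(D)\Longleftrightarrow \ord_p\bigl(L(E_p(-D),1)/\Omega_{E_p(-D)}\bigr)>0$ is not established; all you get unconditionally is $c_p(D)=0\Longleftrightarrow L(E_p(-D),1)=0$, which is strictly weaker and does not yield the ``if and only if'' in either part of the theorem. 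The paper closes exactly this gap without ever evaluating the constant: \Cref{lemLRatio} combines Kohnen's formula (\Cref{propKohnen}) with Agashe's period lemma (\Cref{Agashe}) to show that $L(E(-D),1)/\Omega_{E(-D)}$ equals $|b_E(D)|^2$ times a quantity independent of $D$ up to powers of $2$, so that
\[
\ord_p\!\left(\frac{L(E(-D),1)}{\Omega_{E(-D)}}\right)=\ord_p\!\left(\frac{L(E(-D_0),1)}{\Omega_{E(-D_0)}}\right)+\ord_p\!\left(b_E(|D|)^2\right),
\]
and then a single numerical (\textsc{Magma}) verification that the ratio at the smallest admissible discriminant $D_0$ is a $p$-adic unit does the rest. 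If you want to complete your write-up you should either reproduce this relative-comparison argument together with the base-case computation, or supply an independent proof that your $\kappa_p$ is a $p$-adic unit; as it stands, that is the missing ingredient.
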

\begin{remark}
The claim about ranks in \Cref{CongruencesSelmer} (2) is unconditional thanks to the work of Kolyvagin \cite{Kolyvagin}. 
\end{remark}

\begin{remark}
By Goldfeld's famous conjecture on ranks of quadratic twists of elliptic curves
\cite{Goldfeld}, it turns out that the hypothesis in \Cref{CongruencesSelmer} (2)  is expected to
hold for $100\%$ of the $-D$ for which $\left(\frac{-D}{p}\right)=-1$. Therefore, for almost all such $-D$, we should have a test for determining the presence of order $p$ elements in these Tate--Shafarevich groups.
\end{remark}

\begin{remark} There is a more complicated congruence for the prime $p=31$. For fundamental discriminants $-D<0$ satisfying
$\left(\frac{-D}{31}\right)=-1$, we have that $\dim W_D\equiv \tr(g_{31}|W_D)\pmod{31}$ are related to the central critical
values of the $-D$ twists of the $L$-function for the genus 2 curve 
$$
C: \ \ y^2 + (x^3+x+1)y=x^5+x^4+x^3-x-1
$$
(cf. \cite[\href{http://www.lmfdb.org/Genus2Curve/Q/961/a/961/3}{Genus $2$ Curve 961.a.961.3}]{LMFDB}).
Its $L$-function arises from the
 two newforms in $S_2(\Gamma_0(31))$ which are Galois conjugates. Namely, if $\phi:=\frac{1+\sqrt{5}}{2}$
 then the two newforms are $f^{\sigma}$ and
$$
f(\tau):=\sum_{n=1}^{\infty}a(n)q^n=q+\phi q^2 -2\phi q^3 +(\phi-1)q^4+q^5-(2\phi+2)q^6+O(q^7),
$$
where $\sigma(\sqrt{5})=-\sqrt{5}$. If $p\nmid 31$ is prime, then the local $L$-factor $L_p(T)$ at $p$ is
$$
L_p(T):=(1-a(p)T+pT^2)(1-\sigma(a(p))T+pT^2).
$$
\end{remark}

\begin{remark} 
Apart from the claims about $\tr(g_{17}|W_D)$ (there are no elements of order 17 in $\ON$), \Cref{CongruencesSelmer} holds for $p=17$  as well. Namely, the congruences hold for $E_{17}$, the optimal
$\Gamma_0(17)$ elliptic curve over $\Q$ (cf.  \cite[\href{http://www.lmfdb.org/EllipticCurve/Q/17.a3}{Elliptic Curve 17.a3}]{LMFDB}) 
given by
$$
E_{17}: \ \ y^2+xy+y =x^3-x^2-x-14.
$$
\end{remark}

The two theorems on congruences above only pertain to the dimensions of the graded components of the $\ON$-module $W$.
We now turn to congruences for graded traces for elements of order 2 and 3. To this end, we
let $E_{14}$ and $E_{15}$ be the corresponding optimal elliptic curves over $\Q$
(cf. see \cite[\href{http://www.lmfdb.org/EllipticCurve/Q/14.a6}{Elliptic Curve 14.a6}, \href{http://www.lmfdb.org/EllipticCurve/Q/15.a5}{Elliptic Curve 15.a5}]{LMFDB})
 given by
\begin{displaymath}
\begin{split}
E_{14} & : \ \ y^2+xy+y = x^3+4x-6,\\
E_{15}& : \ \  y^2 +xy +y =x^3+x^2-10x-10.
\end{split}
\end{displaymath}
Using  work of Skinner and Skinner--Urban \cite{Skinner16, UrSk} related to the Iwasawa main conjectures for $\GL_2$, we obtain the following unconditional result.

\begin{theorem}\label{USCorollary}
Assume the notation above, and suppose that $N\in \{14, 15\}$. If $p$ is the unique prime $\geq 5$ dividing $N$, then let $\delta_p:=\frac{p-1}{2}$ and let $p':=N/p$. 
If $-D<0$ is a fundamental discriminant for which $\left(\frac{-D}{p}\right)=-1$ and $\left(\frac{-D}{p'}\right)=1$, then the following are true.
\begin{enumerate}
\item  We have that $\Sel(E_{N}(-D))[p]\neq \{0\}$ if and only if
$$
\tr(g_{p'} | W_D)\equiv \tr(g_{N} | W_D)\equiv \delta_p \cdot (H(D)-\delta_p H^{(p')}(D))\pmod{p}.
$$ 
\item Suppose  that $L(E_{N}(-D),1)\neq 0$. Then we have that $\rk(E(-D))=0$. Moreover, we have $p|\# \Sha(E_{N}(-D))$ if and only if
$$\tr(g_{p'} | W_D)\equiv \tr(g_{N} | W_D)\equiv \delta_p \cdot (H(D)-\delta_p H^{(p')}(D))\pmod{p}.
$$
\end{enumerate}
\end{theorem}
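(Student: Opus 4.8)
The plan is to trace the asserted congruence, via the explicit shape of the McKay--Thompson series, back to a single Fourier coefficient of the weight $3/2$ cusp form attached to $E_N$, and then to match the $\mathfrak p$-divisibility of that coefficient with the $p$-indivisibility of the algebraic central value of $E_N(-D)$. To begin, since $N=pp'$ with $\gcd(p,p')=1$, an element $g_N\in\ON$ of order $N$ has $g_N^p$ of order $p'$, hence conjugate to $g_{p'}$ (unique in $\ON$); reducing $\tr(g_{p'}\mid W_m)=\tr(g_N^p\mid W_m)=\sum_i\lambda_i^p$ (over the eigenvalues $\lambda_i$ of $g_N$ on $W_m$) modulo a prime $\mathfrak p\mid p$, and using $\big(\sum_i\lambda_i\big)^p\equiv\sum_i\lambda_i^p\pmod{\mathfrak p}$, recovers the congruence $\tr(g_{p'}\mid W_m)\equiv\tr(g_N\mid W_m)\pmod p$ for all $m$ noted after \Cref{thm:Exists}, so it suffices to analyze $\tr(g_N\mid W_D)$. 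By the explicit description of the $F_{[g]}$ from \Cref{secProofclass} (see also \Cref{appSing} and \Cref{Lvalues}), we may write $F_{NA}=\Phi_N+h_N$, where $\Phi_N$ gathers the weakly holomorphic generating function of traces of singular moduli for the Hauptmodul of $X_0^{+}(N)$ together with the Hurwitz class-number mock Eisenstein contribution, and $h_N\in S_{3/2}^{+}(\Gamma_0(4N))$ is a rational multiple of the weight $3/2$ newform Shimura-corresponding to $E_N$. A direct reduction of the $D$-th coefficient of $\Phi_N$ modulo $p$ --- using $2\delta_p\equiv-1\pmod p$, the hypotheses $\left(\frac{-D}{p}\right)=-1$ and $\left(\frac{-D}{p'}\right)=1$, and the definition of $H^{(p')}$ from \Cref{secSing} --- identifies it modulo $p$ with $\delta_p\big(H(D)-\delta_pH^{(p')}(D)\big)$, and one checks that the constant relating $h_N$ to the normalized Shimura pre-image of $E_N$ is a $p$-adic unit. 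Writing $c_N(D)$ for the $D$-th coefficient of $h_N$, the congruence of the theorem is therefore equivalent to $c_N(D)\equiv0\pmod{\mathfrak p}$.

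Next I would pass from $c_N(D)$ to the central $L$-value. Since $E_{14}$ and $E_{15}$ satisfy $L(E_N,1)\ne0$ we have $\epsilon(E_N)=+1$; and since $\left(\frac{-D}{-1}\right)\left(\frac{-D}{p}\right)\left(\frac{-D}{p'}\right)=(-1)(-1)(+1)=+1$, the twist $E_N(-D)$ again has root number $+1$. Hence the Waldspurger--Kohnen--Zagier formula, in the Gross--Kohnen--Zagier/Bruinier--Li normalization of \Cref{Lvalues}, applies with no forced vanishing and gives $c_N(D)^2\doteq L^{\mathrm{alg}}(E_N(-D),1)$, equality up to a $p$-adic unit, where $L^{\mathrm{alg}}(E,1):=L(E,1)/\Omega_E$ and a vanishing central value is read as infinite $p$-valuation. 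Since $\#\Sha(E_N(-D))$ is a perfect square (Cassels--Tate) and, under the local hypotheses below, the Tamagawa numbers and torsion of $E_N(-D)$ are prime to $p$, the $p$-valuation of $L^{\mathrm{alg}}(E_N(-D),1)$ is even whenever it is positive; consequently $c_N(D)\equiv0\pmod{\mathfrak p}$ if and only if $p\mid L^{\mathrm{alg}}(E_N(-D),1)$.

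It remains to translate $p\mid L^{\mathrm{alg}}(E_N(-D),1)$ into the $\Sel$ and $\Sha$ statements. As $p$ exactly divides $N$, $E_N$ has multiplicative reduction at $p$; $\left(\frac{-D}{p}\right)=-1$ makes $E_N(-D)$ the unramified quadratic twist of $E_N$ at $p$ --- again non-split multiplicative there --- while $\left(\frac{-D}{p'}\right)=1$ makes $E_N(-D)$ locally isomorphic to $E_N$ at $p'$; using $p\ge5$ and the explicit minimal models, all Tamagawa numbers of $E_N(-D)$ and its torsion subgroup are then prime to $p$. The residual representation $\overline{\rho}_{E_N,p}$ is irreducible, since neither isogeny class contains a rational $p$-isogeny, and twisting by $\chi_{-D}$ preserves this. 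Hence, under the (readily verified) hypotheses of Skinner and Skinner--Urban \cite{Skinner16,UrSk}, the cyclotomic Iwasawa main conjecture for $\GL_2$ holds at the multiplicative prime $p$ for every $E_N(-D)$ in our family; together with Kato's divisibility and, for the rank assertion, Kolyvagin's theorem \cite{Kolyvagin}, it yields the $p$-part of the Birch--Swinnerton-Dyer formula. When $L(E_N(-D),1)\ne0$ this gives $\rk(E_N(-D))=0$, finiteness of $\Sha(E_N(-D))$, and $\ord_p\#\Sha(E_N(-D))=\ord_pL^{\mathrm{alg}}(E_N(-D),1)$; combined with $\rk=0$ and $p\nmid\#E_N(-D)(\Q)_{\mathrm{tors}}$, the exact sequence preceding \Cref{CongruencesSelmer} gives $\Sel(E_N(-D))[p]=\Sha(E_N(-D))[p]$, and both (1) and (2) follow in this case. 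When $L(E_N(-D),1)=0$ the congruence of the theorem already holds, and the main conjecture divisibility forces $\Sel_{p^\infty}(E_N(-D))$ to have positive $\Z_p$-corank, whence $\Sel(E_N(-D))[p]\ne\{0\}$; this finishes (1).

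I expect the main obstacle to be twofold. First, the modulo-$p$ bookkeeping: one must identify $\Phi_N$ precisely, show it reduces modulo $p$ to exactly $\delta_p\big(H(D)-\delta_pH^{(p')}(D)\big)$ on the progressions cut out by the two Legendre conditions, and verify that \emph{every} normalizing constant entering the argument --- the multiple relating $h_N$ to the Shimura pre-image of $E_N$, the Gross--Kohnen--Zagier proportionality constant, and the congruence number of the newform of $E_N$ --- is a $p$-adic unit; a failure of any of these at $p$ would break the equivalence. Second, the Iwasawa-theoretic input must be invoked at exactly the right level of generality: the main conjecture is needed at a prime of \emph{multiplicative} (not good ordinary) reduction, uniformly over the family of quadratic twists $E_N(-D)$. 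This is precisely what the hypotheses $\left(\frac{-D}{p}\right)=-1$ and $\left(\frac{-D}{p'}\right)=1$ secure --- they keep the Tamagawa numbers at the bad primes $p,p'$ and the torsion prime to $p$, so that the $p$-part of Birch--Swinnerton-Dyer reads off directly from $L^{\mathrm{alg}}(E_N(-D),1)$.
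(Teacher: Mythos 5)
Your proposal is correct and follows essentially the same route as the paper's proof: use the explicit expressions for $F_{[g]}$ together with the vanishing of Heegner traces and higher-level class numbers under the two Legendre conditions (\Cref{lemHeegner}) to reduce the congruence to $p\mid b_N(D)$, relate $b_N(D)^2$ to $L(E_N(-D),1)/\Omega_{E_N(-D)}$ up to a $p$-adic unit via Kohnen--Waldspurger and Agashe (\Cref{lemLRatio}, with the unit verified at a base discriminant), and then invoke Skinner's theorem (\Cref{Skinner}) at the multiplicative prime $p$ together with the Tamagawa/torsion control and Kolyvagin for the rank claim. The only deviations (deriving $\tr(g_{p'}|W_D)\equiv\tr(g_N|W_D)$ by a power-map argument, the Cassels--Tate parity remark, and irreducibility via absence of a rational $p$-isogeny rather than Serre's surjectivity lemma) are cosmetic and do not change the argument.
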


\begin{remark}
We note that \Cref{USCorollary} does not apply for $p=2$ (resp. $p=3$) when $N=14$ (resp. $N=15$).
In the case of $p=2$ the work of Skinner--Urban does not apply. For $p=3$ the connection between
graded traces and central values of Hasse-Weil $L$-functions does not hold. Namely, a critical hypothesis
due to Kohnen in terms of eigenvalues of Atkin--Lehner involutions fails (cf. \Cref{propKohnen}). 
\end{remark}

\begin{remark}
In view of the new results presented here, it is natural to wonder where one should look for further moonshine.
It seems likely that other sporadic groups will fall within the scope of weight 3/2 moonshine.  In another direction, one can ask about other half-integral weights. Also, it is natural to wonder if there are extensions of moonshine to Shimura curves and varieties. Are there
infinite-dimensional $G$-modules which organize the arithmetic of their divisors?
\end{remark}

\subsection{Methods} 
To prove \Cref{thm:Exists}, we employ the theory of Rademacher sums, harmonic Maass forms, and standard
facts about the representation theory of finite groups. Namely, we make use of the character table of $\ON$ (cf. \Cref{ct1}), and
the Schur orthogonality relations for group characters. In \Cref{secRademacher},  we first recall essential
facts about harmonic Maass forms and Rademacher sums.
In \Cref{secProofclass}, we prove a theorem which, using harmonic Maass forms, explicitly constructs weakly holomorphic weight 3/2 modular forms, one for each conjugacy
class of $\ON$. Furthermore, we establish that these modular forms have integer Fourier coefficients.
To complete the proof, we apply the Schur orthogonality relations to these functions to construct weight 3/2 modular forms
whose coefficients  encode the multiplicities of the irreducibles of the graded components of the alleged module $W$.
The proof is complete once it is established that these multiplicities are integral. 
Since the obstruction to integrality is bounded by group theoretical considerations, the proof of integrality follows 
by confirming
sufficiently many congruence relations among these forms. 
These calculations confirm that $W$ is a virtual module.
However, as mentioned earlier, it turns out that the multiplicities of each irreducible are non-negative in $W_m$ once
$m>16$. This claim follows from an analytic argument which involves bounding sums of Kloosterman sums.
These statements are proved in \Cref{secProofMoon}. In \Cref{secSing} we recall properties of singular moduli, and we interpret
the modular forms number theoretically in terms of singular moduli and class numbers and cusp forms.
We prove Theorems~\ref{ClassNumberCongruences}, \ref{CongruencesSelmer} and \ref{USCorollary} in \Cref{Applications}.
These proofs require the explicit formulas for the $F_{[g]}$, the results in \Cref{secSing}, and the work of Skinner--Urban
on the Birch and Swinnerton-Dyer Conjecture. We conclude the paper in \Cref{Examples} with numerical examples of some of these results.

\section*{Acknowledgements} \noindent
The authors thank Kathrin Bringmann, Michael Griffin, Dick Gross, Maryam Khaqan, Winfried Kohnen, Martin Raum, Jeremy Rouse, Jean-Pierre Serre and anonymous referees for helpful comments and corrections. The authors thank Theo Johnson-Freyd for communication regarding his joint work with David Treumann on the cohomology of the O'Nan group.
The authors thank Drew Sutherland for computing the elliptic curve invariants in \Cref{tblE14,tblE15}. 

\section{Rademacher Sums and Harmonic Maass Forms}\label{secRademacher}
Harmonic Maass forms are now a central topic in number theory. Their study originates from the work of Bruinier--Funke \cite{BF04} on geometric theta lifts and Zwegers' seminal work \cite{Zwegers} on Ramanujan's mock theta functions.
These realizations played a central role in the work of Bringmann and one of the authors on the Andrews--Dragonette Conjecture and
Dyson's partition ranks \cite{BO06, BO10}. For an overview on the subject of harmonic Maass forms and its applications in number theory and various other fields of mathematics, including mathematical physics, we refer the reader to \cite{BOOK, DMZ, Ono08, ZagierBourbaki}. 

Here, we briefly recall the essential facts about harmonic Maass forms that are required in this paper.
Namely, we recall
Rademacher sums, and we describe their projection to Kohnen's plus space.

\subsection{Rademacher Sums}

Here and throughout, we let $\tau=u+iv$, $u,v\in\R$, denote a variable in the upper half-plane $\HH$ and we use the shorthand $e(\alpha):=e^{2\pi i\alpha}$.

\begin{definition}
We call a smooth function $f:\HH\rightarrow\C$ a {harmonic Maass form}  of \emph{weight} $k\in\tfrac 12\Z$ and \emph{level} $N$ if the following conditions are satisfied:
\begin{enumerate}
\item We have $f|_k\gamma(\tau)=f(\tau)$ for all $\gamma\in\Gamma_0(N)$ and $\tau\in\HH$, where we define
\[f|_k\gamma (\tau):=\begin{cases} (c\tau+d)^{-k}f\left(\frac{a\tau+b}{c\tau+d}\right) & \text{if }k\in\Z \\
\left(\left(\frac{c}{d}\right)\eps_d\right)^{2k}\left(\sqrt{c\tau+d}\right)^{-2k}f\left(\frac{a\tau+b}{c\tau+d}\right) & \text{if }k\in\frac 12+\Z.
\end{cases}\]
with 
\[\eps_d:=\begin{cases} 1 & \text{if }d\equiv 1\pmod{4},\\ i & \text{if }d\equiv 3\pmod 4.\end{cases}\]
and where we assume $4|N$ if $k\notin\Z$.
\item The function $f$ is annihilated by the \emph{weight $k$ hyperbolic Laplacian},
\[\Delta_k f:=\left[-v^2\left(\frac{\partial^2}{\partial u^2}+\frac{\partial^2}{\partial v^2}\right)+ikv\left(\frac{\partial}{\partial u}+i\frac{\partial}{\partial v}\right)\right] f\equiv 0.\]
\item There is a polynomial $P(q^{-1})$ such that $f(\tau)-P(e^{-2\pi i\tau})=O(v^c)$ for some $c\in\R$  as $v\to\infty$. Analogous conditions are required at all cusps of $\Gamma_0(N)$.
\end{enumerate}
We denote the space of harmonic Maass forms of weight $k$ and level $N$  by $H_k(\Gamma_0(N))$.
\end{definition}

\begin{remark}
We note that condition (3) in the definition above differs from other definitions which occur commonly in the literature.
For example, {\it harmonic Maass forms with principal parts} are those forms for which the $O(v^c)$ bound is replaced by
$O(e^{-cv})$ for $c>0$. Namely, the harmonic Maass forms we consider here are permitted to have $0^{\rm th}$ Fourier coefficients which
are essentially powers of $v$.
\end{remark}

For the basic properties of these functions, we again refer to the literature mentioned above. We mention however the following lemmas. 

\begin{lemma}\label{lem:split}
Let $f\in H_{k}(\Gamma_0(N))$ be a harmonic Maass form of weight $k\neq 1$. Then there is a canonical splitting
\begin{equation}\label{eq:split}
f(\tau)=f^+(\tau)+f^-(\tau),
\end{equation}
where for some $m_0\in\Z$ we have the Fourier expansions
\[f^+(\tau):=\sum\limits_{n=m_0}^\infty c_f^+(n)q^n,\]
and
\[f^-(\tau):=\overline{c_f^-(0)}\frac{(4\pi v)^{1-k}}{k-1}+\sum\limits_{\substack{n=1}}^\infty \overline{c_f^-(n)}n^{k-1}\Gamma(1-k;4\pi nv)q^{-n},\]
where 
\[\Gamma(\alpha;x):=\int_x^\infty t^{\alpha}e^{-t}\frac{dt}{t}\]
 denotes the usual incomplete gamma function. 
\end{lemma}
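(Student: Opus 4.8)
The plan is to reduce the statement to the behavior of the hyperbolic Laplacian on Fourier modes and then invoke the one-variable theory of the incomplete gamma function. Since $f\in H_k(\Gamma_0(N))$ is invariant (with the appropriate automorphy factor, integral or half-integral weight) under $\tau\mapsto\tau+1$, it has a Fourier expansion $f(\tau)=\sum_{n} a_n(v) e(nu)$ with $v$-dependent coefficients. First I would substitute this expansion into the equation $\Delta_k f=0$. A direct computation — separating the $e(nu)$ mode — shows that each $a_n(v)$ satisfies a second-order linear ODE in $v$, namely
\[
v^2 a_n''(v) + 4\pi^2 n^2 v^2 a_n(v) + 4\pi n v\, a_n'(v) - 2\pi k n v\, a_n(v) - k v\, a_n'(v)=0
\]
(up to bookkeeping of signs; the point is only that it is a fixed ODE depending on $n$ and $k$). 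For $n\neq 0$ this is, after the substitution $a_n(v)=e^{-2\pi n v} b_n(w)$ with $w=4\pi n v$ (for $n>0$; symmetrically for $n<0$), a confluent hypergeometric equation whose two solutions are, up to normalization, the constant function and $\Gamma(1-k;w)$; for $n=0$ the ODE degenerates and its solutions are the constant function and $v^{1-k}$. This is exactly the dichotomy that produces the two pieces $f^+$ and $f^-$ in the claimed form.

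Next I would impose the growth condition (3) in the definition of $H_k(\Gamma_0(N))$ to pin down which linear combination of the two fundamental solutions is allowed in each mode. For $n<0$ the "holomorphic-type" solution $q^n=e^{-2\pi i n\tau}$ grows like $e^{2\pi|n|v}$, so only finitely many such terms may occur (this is the polynomial $P(q^{-1})$), while the "nonholomorphic-type" solution $n^{k-1}\Gamma(1-k;4\pi nv)q^{-n}$ decays and is always permitted; for $n>0$ the roles are reversed and the $O(v^c)$ condition rules out the exponentially growing solution, leaving the pure $q$-expansion $\sum c^+_f(n)q^n$. For $n=0$, the condition $f(\tau)-P(e^{-2\pi i\tau})=O(v^c)$ permits both the constant term and the power $v^{1-k}$, which is why $f^-$ carries the term $\overline{c^-_f(0)}(4\pi v)^{1-k}/(k-1)$; note that the hypothesis $k\neq 1$ is exactly what makes this expression well-defined (for $k=1$ the second solution of the $n=0$ ODE is $\log v$, not a power, which is the reason that weight is excluded). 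Collecting the surviving terms, grouping the holomorphic pieces into $f^+$ and the incomplete-gamma pieces together with the $v^{1-k}$ term into $f^-$, and relabeling coefficients as $c^+_f(n)$ and $\overline{c^-_f(n)}$, yields the asserted decomposition; writing the lower summation bound for $f^+$ as $m_0$ (the most negative $n$ with a nonzero holomorphic term, or $0$ if there is none) completes the expansion at the cusp $\infty$. The canonicity of the splitting is immediate, since $f^+$ is by construction the part of the Fourier expansion in $q^{\N}$ (a holomorphic $q$-series) and $f^-$ is the remainder; any other such decomposition would differ by a function that is simultaneously a holomorphic $q$-series and a sum of incomplete-gamma terms, hence zero.

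The main obstacle is purely computational rather than conceptual: one must correctly carry out the mode-by-mode reduction of $\Delta_k f=0$ to an ODE and then identify that ODE with the confluent hypergeometric equation, tracking the half-integral-weight automorphy factor carefully so that the Fourier expansion in $u$ is legitimate at all cusps (condition (3) is imposed at every cusp of $\Gamma_0(N)$, and with $4\mid N$ in the half-integral case the widths and multiplier systems at the various cusps must be handled uniformly). Once the two fundamental solutions in each Fourier mode are in hand and matched against the prescribed growth, the assembly of $f^+$ and $f^-$ is routine. I would therefore present the ODE computation in modest detail, cite the standard identification with $\Gamma(1-k;\cdot)$, and refer to the literature (e.g. the sources on harmonic Maass forms listed in the paper) for the verification that these are all the solutions of the requisite moderate growth.
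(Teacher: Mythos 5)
Your proof is correct and is exactly the standard argument: the paper itself gives no proof of this lemma (it is quoted from the harmonic Maass form literature, e.g.\ Bruinier--Funke \cite{BF04}), and that literature establishes it precisely by your route --- mode-by-mode reduction of $\Delta_k f=0$ to a second-order ODE in $v$, identification of the two fundamental solutions ($q^n$ and $\Gamma(1-k;4\pi|n|v)\,q^{n}$ for $n\neq 0$; constants and $v^{1-k}$ for $n=0$, with $\log v$ replacing $v^{1-k}$ when $k=1$, whence the excluded weight), and selection of the admissible solutions by the growth condition at the cusps. Your displayed ODE has minor sign/term slips (after writing $a_n(v)=h_n(v)e^{-2\pi nv}$ the correct reduction is $v\,h_n''+(k-4\pi n v)\,h_n'=0$), but as you yourself note this is bookkeeping and does not affect the structure or validity of the argument.
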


\noindent
The $q$-series $f^{+}$ in \eqref{eq:split} is called the {\it holomorphic part} of the harmonic Maass form $f$. 

An important differential operator in the theory of harmonic Maass forms is the $\xi$-operator, a variation of the Maass lowering operator.
\begin{proposition}
The operator
\[\xi_k:H_k(\Gamma_0(N))\to M_{2-k}(\Gamma_0(N)),\; f\mapsto \xi_k(f):=2iy^k\overline{\frac{\partial f}{\partial \overline{\tau}}}\]
is a well-defined and surjective anti-linear map with kernel $M_{k}^!(\Gamma_0(N))$. 
\end{proposition}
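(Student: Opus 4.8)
The plan is to check the four assertions in turn: anti-linearity, well-definedness (that $\xi_k$ actually lands in $M_{2-k}(\Gamma_0(N))$), the identification of the kernel, and surjectivity; only the last is substantial. Throughout one may assume $k\neq 1$, the case of interest here being $k=\tfrac32$ with $2-k=\tfrac12$, so that \Cref{lem:split} applies. Anti-linearity is immediate, since complex conjugation is anti-linear: $\xi_k(\alpha f+\beta g)=2iv^k\,\overline{\partial_{\overline\tau}(\alpha f+\beta g)}=\overline{\alpha}\,\xi_k(f)+\overline{\beta}\,\xi_k(g)$ for $\alpha,\beta\in\C$. For well-definedness I would invoke the operator identity $\Delta_k=-\xi_{2-k}\circ\xi_k$, a routine chain-rule computation. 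Since $f$ is $\Delta_k$-harmonic we get $\xi_{2-k}(\xi_k f)=0$; and since $\xi_{2-k}(h)=2iv^{2-k}\,\overline{\partial_{\overline\tau}h}$ vanishes precisely when $h$ is holomorphic on $\HH$, it follows that $\xi_k f$ is holomorphic. A short computation with the automorphy factors gives $\xi_k(f|_k\gamma)=(\xi_k f)|_{2-k}\gamma$, so weight-$k$ invariance of $f$ under $\Gamma_0(N)$ becomes weight-$(2-k)$ invariance of $\xi_k f$. Finally, applying $\xi_k$ term by term to the Fourier expansion of \Cref{lem:split}, one sees that $\xi_k$ annihilates the holomorphic part $f^+$ and sends $f^-$ to $-(4\pi)^{1-k}\sum_{n\ge 0}c_f^-(n)q^n$, which is holomorphic at $\infty$; the same at every cusp shows $\xi_k f$ is holomorphic at all cusps, so $\xi_k f\in M_{2-k}(\Gamma_0(N))$.

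The kernel comes out of the same holomorphy criterion: $\xi_k f=0$ if and only if $\partial_{\overline\tau}f\equiv 0$, i.e. $f$ is holomorphic on $\HH$. For such $f$, condition (2) in the definition of a harmonic Maass form is automatic, and conditions (1) and (3) say exactly that $f$ is a weight-$k$ modular form for $\Gamma_0(N)$ with at worst poles at the cusps; that is, $f\in M_k^!(\Gamma_0(N))$. Conversely every weakly holomorphic modular form of weight $k$ lies in $H_k(\Gamma_0(N))$ and is killed by $\xi_k$. Hence $\ker\xi_k=M_k^!(\Gamma_0(N))$.

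Surjectivity is the crux, and I would prove it by producing a preimage for a spanning set of $M_{2-k}(\Gamma_0(N))$. For each cusp $\mathfrak a$ of $\Gamma_0(N)$ and each admissible index $m\ge 0$, let $P^{\mathfrak a}_{2-k,m}$ denote the corresponding classical weight-$(2-k)$ Poincaré series (for $m=0$, an Eisenstein series); these collectively span $M_{2-k}(\Gamma_0(N))$. To each one I attach a companion weight-$k$ Maass--Poincaré series $F^{\mathfrak a}_{k,m}$, formed as an average over $\Gamma_\infty\backslash\Gamma_0(N)$ of a seed built from an $M$-Whittaker function chosen to be annihilated by $\Delta_k$ and to have the prescribed growth at $\mathfrak a$; one checks that $F^{\mathfrak a}_{k,m}\in H_k(\Gamma_0(N))$, and an unfolding computation gives $\xi_k F^{\mathfrak a}_{k,m}=c\cdot P^{\mathfrak a}_{2-k,m}$ for an explicit nonzero constant $c$. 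By the anti-linearity already established, the image of $\xi_k$ is then all of $M_{2-k}(\Gamma_0(N))$. (Equivalently, given $g=\sum b(n)q^n$ one may take the non-holomorphic Eichler integral of $g$ and correct it by a holomorphic Eichler integral supplied by Eichler--Shimura theory; the two approaches agree.)

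The hard part is exactly this last step, and it is sharper at the weight $k=\tfrac32$ used in this paper than the sketch lets on: since $\tfrac32<2$, the defining series for the $F^{\mathfrak a}_{k,m}$ do not converge absolutely, so one must replace them by the regularized Rademacher sums developed in the remainder of \Cref{secRademacher} --- or, equivalently, invoke the vanishing statement in Eichler--Shimura cohomology that supplies the holomorphic correction term. Once that convergence issue is settled, everything else is bookkeeping with automorphy factors and with the canonical splitting of \Cref{lem:split}.
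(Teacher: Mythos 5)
The paper does not actually prove this proposition: it is recalled as a standard fact from the harmonic Maass form literature (Bruinier--Funke \cite{BF04}), so your argument has to be measured against the standard proofs. Your treatment of the routine parts is correct: anti-linearity is immediate, well-definedness follows from $\Delta_k=-\xi_{2-k}\circ\xi_k$ together with the intertwining $\xi_k(f|_k\gamma)=(\xi_kf)|_{2-k}\gamma$ and the effect of $\xi_k$ on the splitting of \Cref{lem:split} (your formula $\xi_kf=-(4\pi)^{1-k}\sum_{n\ge0}c_f^-(n)q^n$ is right, and it is exactly here that the paper's growth condition $O(v^c)$ forces the image into $M_{2-k}$ rather than $M_{2-k}^!$), and the identification of the kernel with $M_k^!(\Gamma_0(N))$ is fine.

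The genuine gap is in surjectivity, precisely the step you call the crux. Your plan requires the classical weight-$(2-k)$ Poincar\'e and Eisenstein series to converge and to span $M_{2-k}(\Gamma_0(N))$, but in the only case the paper uses ($k=\tfrac32$, so $2-k=\tfrac12$) these series diverge (absolute convergence needs weight $>2$), and even after regularization the claim that they span $M_{1/2}$ --- by Serre--Stark a space of unary theta series --- is not routine; regularized low-weight Poincar\'e series are moreover typically only harmonic rather than holomorphic. You flag the convergence problem only for the weight-$k$ Maass--Poincar\'e series, and the two remedies you offer do not close it: \Cref{FE} only says that the shadow of $R^{[\mu]}_{N,3/2}$ is a multiple of $R^{[-\mu]}_{N,1/2}$, so you would still need to prove that these weight-$1/2$ Rademacher sums span $M_{1/2}$, which is nowhere addressed; and Eichler--Shimura cohomology is an integral-weight ($\ge 2$) mechanism with no direct half-integral analogue here. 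The standard ways to finish are either the Serre-duality/Dolbeault argument of \cite{BF04}, which gives surjectivity of $\xi_k$ onto $M^!_{2-k}$ and hence onto $M_{2-k}$ after noting that a preimage of a form holomorphic at the cusps automatically satisfies the paper's growth condition, or, concretely for $2-k=\tfrac12$, to reduce via Serre--Stark to theta series and exhibit explicit harmonic preimages of Zagier--Hirzebruch type (the completion of $\calH(\tau)=\sum_{n\ge0}H(n)q^n$ and its level variants, whose images under $\xi_{3/2}$ are nonzero multiples of $\vartheta$ and its rescalings). As written, your surjectivity argument would not go through for the weight that this paper needs.
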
 
\emph{Mock modular forms} are the holomorphic parts of harmonic Maass forms. Any mock modular form has an associated modular form, called its \emph{shadow}, which is the image of its corresponding harmonic Maass form under the $\xi$-operator. A mock modular form with vanishing shadow is a (weakly holomorphic) modular form.

The next lemma seems to have been missed by the literature.

\begin{lemma}\label{lemcusp}
A harmonic Maass form whose holomorphic part vanishes at all cusps is a (holomorphic) cusp form.
\end{lemma}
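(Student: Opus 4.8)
The plan is to reduce the statement to showing that the shadow of $f$ vanishes identically. Write $f=f^{+}+f^{-}$ as in \Cref{lem:split} and set $g:=\xi_{k}(f)$, so $g\in M_{2-k}(\Gamma_{0}(N))$. To say that $f^{+}$ vanishes at all cusps means that at each cusp $\fa$ of $\Gamma_{0}(N)$ the Fourier expansion of $f^{+}$ (in a local parameter) involves only strictly positive powers of $q$; in particular $f^{+}$ has no principal part and no constant term at $\fa$. If we show $g=0$, then $f=f^{+}$ lies in $\ker\xi_{k}=M_{k}^{!}(\Gamma_{0}(N))$ by the proposition on the $\xi$-operator, and since $f^{+}$ then has no poles and vanishing constant terms at every cusp, $f$ is a (holomorphic) cusp form. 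So the whole content of the lemma is the vanishing of $g$.

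To prove $g=0$ I would evaluate the Petersson square-norm of $g$ by a Stokes-theorem argument, as in the Bruinier--Funke pairing. Let $\calF$ be a fundamental domain for $\Gamma_{0}(N)$ and $\calF_{T}$ its truncation at height $T$ in each cuspidal region. Since $g$ is holomorphic one has, pointwise on $\HH$,
\[
\bigl|\xi_{k}(f)\bigr|^{2}\,v^{2-k}\,\frac{du\,dv}{v^{2}}
=-2i\,\frac{\partial}{\partial\bar{\tau}}\bigl(f\,g\bigr)\,du\,dv
=-\,d\bigl(f\,g\,d\tau\bigr),
\]
and $f\,g\,d\tau$ is $\Gamma_{0}(N)$-invariant because $g$ carries the multiplier system conjugate to that of $f$, so $f\,g$ transforms like a modular form of weight $2$. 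Stokes' theorem gives
\[
\int_{\calF_{T}}\bigl|\xi_{k}(f)\bigr|^{2}\,v^{2-k}\,\frac{du\,dv}{v^{2}}=-\int_{\partial\calF_{T}}f\,g\,d\tau,
\]
and, $f\,g\,d\tau$ being invariant, the contributions of the $\Gamma_{0}(N)$-paired portions of $\partial\calF_{T}$ cancel, leaving only the horocyclic arcs at height $T$ about the cusps; each such arc integral equals the width of $\fa$ times the zeroth (in $u$) Fourier coefficient of $f\,g$ at $\fa$, evaluated at height $v=T$. Using the expansions of \Cref{lem:split}, that coefficient is the sum of a $T$-independent term $\sum_{n\le 0}c_{f}^{+}(\fa,n)\,c_{g}(\fa,-n)$, which vanishes because $f^{+}$ vanishes at $\fa$, and of a term built from $\overline{c_{f}^{-}(\fa,0)}\tfrac{(4\pi v)^{1-k}}{k-1}\,c_{g}(\fa,0)$ together with the incomplete gamma expressions $\overline{c_{f}^{-}(\fa,n)}\,n^{k-1}\Gamma(1-k;4\pi n v)\,c_{g}(\fa,n)$ for $n\ge 1$. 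For $k=\tfrac32$ (indeed for any $k>1$) this second term tends to $0$ as $v=T\to\infty$, since $(4\pi v)^{1-k}\to 0$ and the incomplete gamma factors decay exponentially. Hence the right-hand side tends to $0$; the left-hand side is non-negative and non-decreasing in $T$, so it is identically $0$, forcing $\xi_{k}(f)\equiv 0$ on $\HH$.

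The main obstacle is the bookkeeping in this last step: one must isolate the $T$-independent part of the zeroth Fourier coefficient of $f\,g$ at each cusp, recognise it as exactly the Bruinier--Funke pairing term $\sum_{n\le 0}c_{f}^{+}(\fa,n)c_{g}(\fa,-n)$ annihilated by the hypothesis, and observe that the non-holomorphic constant term $c_{f}^{-}(\fa,0)$ --- which the hypothesis does not control --- is harmless when $k>1$ because its contribution decays. (When $k<1$ one argues instead that the $\xi$-operator pins $c_{g}(\fa,0)$ to a fixed real scalar multiple of $c_{f}^{-}(\fa,0)$, so that for $k<1$ the dominant boundary contribution has a sign incompatible with the non-negativity of $\|\xi_{k}(f)\|^{2}$ unless $c_{f}^{-}(\fa,0)=0$ at every cusp, whereupon the remaining terms again decay; the case $k=1$ is excluded already in \Cref{lem:split}.) A minor point is that one should work throughout with the truncated integrals and invoke monotonicity rather than assume a priori that $\|g\|^{2}$ is finite --- although for $k>1$, $g$ having weight $2-k<1$, it in fact is.
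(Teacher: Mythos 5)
Your argument is correct for the case that actually matters and is essentially the paper's: the printed proof simply cites Proposition 3.5 of Bruinier--Funke, and what you have written out is exactly the Stokes-theorem computation behind that pairing, applied with $g=\xi_k(f)$, plus the observation that the possible non-cuspidality of the shadow is harmless when $k>1$ because the constant-term contribution carries a factor $(4\pi T)^{1-k}\to 0$ and the truncated Petersson integral of a holomorphic form of weight $2-k<1$ converges. Since the lemma is invoked in the paper only at weight $3/2$, your main argument covers everything that is needed, and the concluding step ($\xi_k(f)=0$ forces $f=f^+\in M_k^!$, hence a cusp form by the vanishing hypothesis) is right.

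One caveat: the parenthetical aside about $k<1$ does not work as stated. Using $c_g(\fa,0)=-(4\pi)^{1-k}c_f^-(\fa,0)$, the constant-term contribution to the boundary integral at the cusp $\fa$ is $\frac{(4\pi)^{2(1-k)}}{1-k}\,|c_f^-(\fa,0)|^2\,T^{1-k}$, which for $k<1$ is positive and growing --- but it is matched exactly, at leading order, by the divergence of the truncated norm $\int^{T}|c_g(\fa,0)|^2v^{-k}\,dv\sim\frac{|c_g(\fa,0)|^2}{1-k}T^{1-k}$ on the left-hand side whenever $c_g(\fa,0)\neq 0$. So there is no sign incompatibility and no contradiction to extract; the identity is consistent to leading order and one cannot conclude $c_f^-(\fa,0)=0$ this way. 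Handling weights below $1$ honestly requires the regularized pairing and a finer analysis of the subleading terms (or one simply restricts the lemma to the weights in which it is used, as the paper implicitly does). This does not affect the validity of your proof for $k=3/2$.
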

\begin{proof}
This is a direct consequence of the properties of the Bruinier--Funke pairing (cf. Proposition 3.5 in \cite{BF04}).
\end{proof}

A convenient way to construct mock modular forms, which are holomorphic parts of harmonic Maass forms, is through \emph{Rademacher sums}. These were introduced by Rademacher in his work on coefficients of the $J$-function \cite{Rademacher}, and further developed in the context of moonshine mainly by Cheng, Frenkel and one of the authors \cite{CD11,CD12,DF}. 

Rademacher sums can be thought of as low weight analogues of Poincar\'e series. For a fixed level $N$ and some $K>0$, we define the set 
\[\Gamma_{K,K^2}(N):=\left\{\abcd\in\Gamma_0(N)\: : \: |c|<K\text{ and }|d|<K^2\right\}.\]
Given an integer $\mu$ 
we can use this to formally define the Rademacher sum
\[R^{[\mu]}_{k,N}(\tau):=\lim\limits_{K\to \infty} \sum_{\gamma\in\Gamma_\infty\setminus \Gamma_{K,K^2}(N)} q^\mu|_k\gamma\]
where as usual $\Gamma_\infty:=\left\{\pm\left(\begin{smallmatrix} 1 & n \\ 0 & 1 \end{smallmatrix}\right)\: : \: n \in\Z\right\}$ denotes the stabilizer of $\infty$ in $\Gamma_0(N)$.
If convergent, these sums define mock modular forms of the indicated weight, level and character.
Convergence for these series however is in general a delicate matter when the weight $k$ is between $0$ and $2$. We will be interested in these series when the weight is $k=\frac 32$ in which case it has been established in \cite[Section 5]{CD11} that they do converge (possibly using a certain regularization explained in loc. cit.) and define holomorphic functions on $\HH$. 

By construction, Rademacher sums are $1$-periodic and therefore have a Fourier expansion. It is given in terms of infinite sums of \emph{Kloosterman sums}
\begin{equation}\label{Kloost}
K_k(m,n,c):=\sideset{}{^*}\sum_{d\smod{c}}\left(\frac cd\right)\eps_d^{2k} e\left(\frac{m\overline{d}+nd}{c}\right)
\end{equation} 
weighted by Bessel functions. 
Here we have that $k\in\frac 12+\Z$, $c$ is divisible by $4$, the ${}^*$ at the sum indicates that it runs over primitive residue classes modulo $c$, and $\overline{d}$ denotes the multiplicative inverse of $d$ modulo $c$. Computing the Fourier expansion of a Rademacher sum is a standard computation, see for instance \cite[Section 3.1]{CD12} and \cite[Section 8.3]{Ono08}.

\begin{theorem}\label{FE}
Assuming locally uniform convergence, for $\mu\leq 0$ and $k\in\frac 12+\N$ and $4|N$, the Rademacher sum $R^{[\mu]}_{k,N}$ defines a mock modular form of weight $k$ for $\Gamma_0(N)$ whose shadow is given by a constant multiple of the Rademacher sum $R^{[-\mu]}_{2-k,N}$. Its Fourier expansion is given by
\[R^{[\mu]}_{k,N}(\tau)=q^\mu+\sum_{n=1}^\infty c_{k,N}^{[\mu]}(n)q^n,\]
where
\begin{equation}\label{eqFC}
c_{k,N}^{[\mu]}(n)= - 2 \pi  i^k   \left\vert \frac{n}{\mu}
\right\vert^{\frac{k-1}{2}}
  \sum_{\substack{c>0\\c\equiv 0\smod{N}}}
   \frac{K_{k}(\mu,n,c)}{c}\cdot
 I_{k-1}\!\left(\frac{4\pi\sqrt{|\mu n|}}{c}\right)
\end{equation}
for $\mu<0$ and
\begin{equation}\label{eqFC0}
c_{k,N}^{[0]}(n)= (-2\pi i)^k \frac{n^{k-1}}{\Gamma(k)}\sum_{\substack{c>0\\c\equiv 0\smod{N}}}\frac{K_{k}(0,n,c)}{c^k}.
\end{equation}

\noindent
The completion $\widehat{R^{[\mu]}_{k,N}}$ of $R^{[\mu]}_{k,N}$ to a harmonic Maass form has a pole of order $\mu$ at the cusp $\infty$ and vanishes at all other cusps.
\end{theorem}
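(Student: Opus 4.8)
\medskip\noindent\textbf{Proof sketch.}
The plan is to treat the Fourier expansion as the substance of the statement and to deduce the modularity, shadow, and cusp assertions from it together with the general theory of Rademacher sums recalled above. First I would record that $R^{[\mu]}_{N,k}$ is invariant under $\tau\mapsto\tau+1$: translation by $1$ permutes the cosets in $\Gamma_\infty\backslash\Gamma_{K,K^2}(N)$ and, the multiplier being trivial on $\Gamma_\infty$, commutes with the $|_k$-action, so the defining limit is $1$-periodic. Granting the assumed locally uniform convergence, $R^{[\mu]}_{N,k}$ is then holomorphic on $\HH$ with a Fourier expansion $R^{[\mu]}_{N,k}(\tau)=\sum_n c(n)q^n$.

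Next I would compute the $c(n)$ by the classical Rademacher--Poincar\'e unfolding. Decomposing $\Gamma_\infty\backslash\Gamma_0(N)$ into the trivial coset, which contributes the seed $q^\mu$, and the cosets indexed by $(c,d)$ with $c>0$, $c\equiv 0\pmod N$, $\gcd(c,d)=1$ and $d$ running through $\Z$, I would use $\gamma\tau=\tfrac ac-\tfrac1{c(c\tau+d)}$ for $\gamma=\left(\begin{smallmatrix}a&b\\c&d\end{smallmatrix}\right)$ to write
\[
q^\mu|_k\gamma=\left(\left(\tfrac cd\right)\eps_d\right)^{2k}\left(\sqrt{c\tau+d}\right)^{-2k}\,e\left(\tfrac{\mu a}{c}\right)\,e\left(\tfrac{-\mu}{c(c\tau+d)}\right).
\]
Grouping the $d$ in a fixed class modulo $c$ (that is, $d\mapsto d+mc$ with $m\in\Z$) and applying the Lipschitz summation formula converts the sum over $m$ into a Bessel integral of the shape $\int_{\R}\xi^{-k}\,e(-\mu/(c^2\xi))\,e(-n\xi)\,d\xi$; collecting the exponential sums over $d\bmod c$ into the Kloosterman sum $K_k(\mu,n,c)$ of \eqref{Kloost} and the remaining powers of $c$ then reproduces \eqref{eqFC} for $\mu<0$ (the integral being a standard $I$-Bessel integral, which also shows there is no constant term, since the prefactor forces $c(0)=0$) and \eqref{eqFC0} for $\mu=0$ (the integral reducing to $\int_\R\xi^{-k}e(-n\xi)\,d\xi=(-2\pi i)^k n^{k-1}/\Gamma(k)$ for $n>0$). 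This is exactly the computation of \cite[Section 3.1]{CD12} and \cite[Section 8.3]{Ono08}; the only additional care needed is for the half-integral multiplier system, which enters through the factor $i^k$ and the definition of $\eps_d$.

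For the modularity and the shadow I would invoke the analysis of \cite[Section 5]{CD11}: there the regularized sum is shown to converge locally uniformly and to be the holomorphic part of a harmonic Maass form $\widehat{R^{[\mu]}_{N,k}}\in H_k(\Gamma_0(N))$, its non-holomorphic part being the non-holomorphic Eichler integral of a weight $2-k$ form. Applying $\xi_k$ then annihilates the holomorphic part and returns that weight $2-k$ form up to an explicit constant; to fix the constant and identify it with $R^{[-\mu]}_{N,2-k}$ I would match Fourier expansions coefficient by coefficient, using the relation $K_k(\mu,n,c)=\overline{K_{2-k}(-\mu,-n,c)}$ between the half-integral Kloosterman sums together with the Bessel identities that link the $I$-Bessel functions in \eqref{eqFC} to the $J$-Bessel functions occurring in the expansion of $R^{[-\mu]}_{N,2-k}$ (for $k=\tfrac32$ this shadow is a weight $\tfrac12$ form, and the formalism is unchanged). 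For the cusps, at $\infty$ the trivial coset contributes exactly $q^\mu$ while the remaining cosets contribute only to $c(n)$ with $n\geq1$, so $\widehat{R^{[\mu]}_{N,k}}$ has principal part $q^\mu$ at $\infty$ (a pole of order $-\mu$, and none when $\mu=0$); at any other cusp $\mathfrak a$ one conjugates by a scaling matrix and repeats the unfolding, and since the seed is supported at $\infty$ no coset produces a term of negative order there, while the constant term there is governed by the sums $K_k(\mu,0,c)$, whose vanishing for $\mu<0$ (and the Eisenstein-type evaluation for $\mu=0$) forces $\widehat{R^{[\mu]}_{N,k}}$ to vanish at $\mathfrak a$.

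The hard part will be the shadow identification in the weight range $0<k<2$. Because the Rademacher sum is only conditionally convergent there it cannot be rearranged naively, and the error term in its transformation law must be controlled carefully in order to recognize the non-holomorphic completion as a genuinely \emph{harmonic} Maass form and to match its image under $\xi_k$ to \emph{precisely} a constant multiple of $R^{[-\mu]}_{N,2-k}$, rather than merely up to a cusp form or an undetermined scalar. This is exactly the analytic input provided by \cite{CD11}; granting it, what remains is the coefficient bookkeeping sketched above.
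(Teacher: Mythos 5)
The paper itself gives no proof of this theorem: it is quoted as the standard Rademacher--Poincar\'e computation, with the Fourier expansion referred to \cite[Section 3.1]{CD12} and \cite[Section 8.3]{Ono08} and the convergence, completion and shadow in weight $3/2$ referred to \cite[Section 5]{CD11}. Your sketch follows exactly that route --- unfolding over $\Gamma_\infty\backslash\Gamma_0(N)$, Lipschitz/Poisson summation in $d$ to produce Kloosterman sums weighted by Bessel integrals, the relation $K_k(\mu,n,c)=\overline{K_{2-k}(-\mu,-n,c)}$ for the shadow, and deferral to \cite{CD11} for the conditional-convergence and harmonicity issues --- so in method you and the paper agree.

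The one part that would not survive scrutiny as written is your treatment of the constant term and of the cusps other than $\infty$. Setting $n=0$ in \eqref{eqFC}, a formula derived for $n\geq 1$, does not show the constant term vanishes; that coefficient is a separately regularized Selberg--Kloosterman zeta value and must be handled on its own. More seriously, at a cusp $\mathfrak{a}\neq\infty$ the exponential sums appearing after conjugation by a scaling matrix $\sigma_{\mathfrak{a}}$ are Kloosterman sums attached to the cusp pair $(\infty,\mathfrak{a})$, with the multiplier transported by $\sigma_{\mathfrak{a}}$, and are not the sums $K_k(\mu,0,c)$ of \eqref{Kloost}; moreover $K_k(\mu,0,c)$ is a Sali\'e/Gauss-type sum which does not vanish for $\mu<0$ in general (for instance $K_{3/2}(-4,0,4)=1-i$). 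Hence the vanishing of the completion at the other cusps is not a corollary of the coefficient computation at $\infty$; it is part of the analytic content of the regularized construction, i.e.\ precisely the input from \cite{CD11} on which the paper also relies. If you cite that source for the final sentence of the theorem, as you already do for harmonicity and the shadow identification, your sketch matches the paper's treatment; the purported self-contained cusp argument should be dropped.
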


\begin{remark}
One can also consider Rademacher sums of weights $\leq 1/2$, which are the main subject of \cite{CD11} and play a crucial rule in both umbral and Thompson moonshine. The formulas look very similar in those cases, but since they are not needed, we omit them here.
\end{remark}

\subsection{Kohnen's Plus Space}
In \cite{Kohnen85}, Kohnen introduced the notion of the so-called plus space, a natural subspace of weight $k+\frac 12$ cusp forms for $\Gamma_0(4N)$ which is isomorphic via the Shimura correspondence to the space of weight $2k$ cusp forms of level $N$ as a Hecke module, provided that $N$ is odd and square-free. This space is easily characterized via Fourier expansions. Namely, it consists of all forms in $S_{k+\frac 12}(\Gamma_0(4N))$ (or, by extension, $M_{k+\frac 12}^!(\Gamma_0(4N))$ and also $H_{k+\frac 12}(\Gamma_0(4N))$) whose Fourier coefficients are supported on exponents $n$ with  $n\equiv 0,(-1)^k\pmod 4$.  There is a natural projection operator
\[|\pr: S_{k+\frac 12}(\Gamma_0(4N))\rightarrow S_{k+\frac 12}^+(\Gamma_0(4N))\]
for $N$ odd given in terms of slash operators (see loc. cit.), which extends to spaces of weakly holomorphic modular forms and harmonic Maass forms. The action of this projection operator on principal parts of harmonic Maass forms is described in the following lemma (cf. Lemma 2.9 in \cite{GM16}).
\begin{lemma}\label{lemplus}
Let $N$ be odd and let $f\in H_{k+\frac 12}(\Gamma_0(4N))$ for some $k\in\N_0$. Suppose that 
\[f^+(\tau)=q^{-m}+\sum_{n=0}^\infty a_nq^n\]
for some $m>0$ with $-m\equiv 0,(-1)^k\pmod 4$, and suppose also that $f$ has a non-vanishing principal part only at the cusp $\infty$ and is bounded at the other cusps of $\Gamma_0(4N)$. Then the projection $f|\pr$ of $f$ to the plus space has a pole of order $m$ at $\infty$ and has a pole of order $\frac m4$ either at the cusp $\frac 1N$ if $m\equiv 0\pmod 4$, or at the cusp $\frac1{2N}$ if $-m\equiv(-1)^k\pmod 4$, and is bounded at all other cusps.
\end{lemma}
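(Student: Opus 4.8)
\textbf{Proof proposal for Lemma~\ref{lemplus}.}

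The plan is to argue directly from Kohnen's explicit formula for the plus-space projection, which expresses $|\pr$ as a finite linear combination $|\pr=\sum_i c_i\,|_{k+\frac12}\gamma_i$ of slash operators by matrices $\gamma_i\in\GL_2^+(\Q)$ that are ``concentrated at the prime $2$'' (they do not involve the level $N$ at all, since $N$ is odd), with exactly one of the $\gamma_i$ equal to the identity. This expression is defined on all of $H_{k+\frac12}(\Gamma_0(4N))$ and again produces a harmonic Maass form in the plus space; as indicated in the statement this is the content of Lemma~2.9 of \cite{GM16}, whose argument I would follow. Since the non-holomorphic part of a harmonic Maass form contributes nothing to any principal part, the entire assertion reduces to tracking how the finitely many $\gamma_i$ redistribute the single principal part $q^{-m}$ of $f^+$, which by hypothesis lives only at the cusp $\infty$.

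The first step is the standard reduction: $f|\pr$ can fail to be bounded at a cusp $\mathfrak a$ of $\Gamma_0(4N)$ only if $\gamma_i\sigma_{\mathfrak a}(\infty)$ is $\Gamma_0(4N)$-equivalent to $\infty$ for some $i$, where $\sigma_{\mathfrak a}$ is a scaling matrix for $\mathfrak a$, because $f$ is bounded at every cusp other than $\infty$. The identity term preserves a pole at $\infty$. For the remaining terms I would carry out the elementary computation with $\sigma_{\mathfrak a}=\left(\begin{smallmatrix}\ast&\ast\\ c&\ast\end{smallmatrix}\right)$, $c\mid 4N$: using that each $\gamma_i$ beyond the identity involves the matrix $\left(\begin{smallmatrix}4&1\\0&4\end{smallmatrix}\right)$, one checks that $\gamma_i\sigma_{\mathfrak a}(\infty)$, reduced to lowest terms, has denominator a multiple of $4N$ --- the condition for equivalence to $\infty$ --- only when $N\mid c$, i.e.\ only for $\mathfrak a\in\{\infty,\ \tfrac1N,\ \tfrac1{2N}\}$ (using that $N$ is squarefree, so each of these three denominators gives a single cusp class). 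Hence $f|\pr$ is automatically bounded at every other cusp, and it remains to compute the orders at $\infty$, $\tfrac1N$, $\tfrac1{2N}$ and to decide which of the latter two is actually hit.

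For the orders: at $\infty$ the identity term already contributes $q^{-m}$, and summing the contributions of the remaining $\gamma_i$ attaches a Gauss-type sum $\sum_{\nu\bmod 4}e(-m\nu/4)$, which equals $4$ when $4\mid m$ and $0$ otherwise; in either case the total $q^{-m}$-coefficient is nonzero, so the pole at $\infty$ has order exactly $m$. Here the hypothesis $-m\equiv0,(-1)^k\pmod 4$ is precisely what guarantees that the exponent $-m$ survives the projection rather than being annihilated (the excluded residue is exactly $m\equiv 2\pmod4$). Transporting the pole from $\infty$ to $\tfrac1N$ or $\tfrac1{2N}$, a short computation with the scaling matrices shows the effective dilation factor is $\tfrac12$, so the exponent $-m$ becomes $-\tfrac m4$: at the width-$4$ cusp $\tfrac1N$ this emerges as an honest integer, which forces $4\mid m$, while at the width-$1$ cusp $\tfrac1{2N}$ it emerges as a genuine quarter-integer leading exponent, the weight-$(k+\frac12)$ multiplier at that cusp being nontrivial. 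The same exponential sums, now carrying the appropriate quarter-integer phases, are nonzero for exactly one of the two residue classes $m\equiv0\pmod4$ (pole at $\tfrac1N$) and $-m\equiv(-1)^k\pmod4$ (pole at $\tfrac1{2N}$), which is exactly the dichotomy in the statement.

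The main obstacle is the bookkeeping of the half-integral-weight multiplier: one must carry the factors $\left(\frac cd\right)\eps_d^{2k}$ and the square-root automorphy factors through both the matrices $\gamma_i$ defining $|\pr$ and the scaling matrices $\sigma_{\mathfrak a}$, in order to (i) confirm that the transported expansions at $\tfrac1N$ and $\tfrac1{2N}$ are honest Fourier expansions in the correct (integral, resp.\ quarter-integral) local parameter, with leading exponent equal to $-\tfrac m4$ and not merely $\le -\tfrac m4$, and (ii) verify that the relevant leading coefficients do not vanish --- which is where the oddness of $N$, the precise value of $\eps_d$, and the exact constants in Kohnen's formula feed into the Gauss-sum evaluations above. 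Once these multiplier computations are recorded, the argument is the one given for Lemma~2.9 of \cite{GM16}.
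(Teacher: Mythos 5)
The paper gives no proof of this lemma at all: it is quoted from Lemma~2.9 of \cite{GM16}, and the proof there is exactly the computation you outline --- write Kohnen's projection as a finite linear combination of slash operators, track where the single principal part at $\infty$ can be transported, and evaluate the resulting exponential sums at the cusps $\infty$, $\tfrac1N$, $\tfrac1{2N}$. So your route is the intended one, and your cusp analysis (only the denominators $N$, $2N$, $4N$ can acquire poles; widths $4$ and $1$; the dichotomy $m\equiv 0$ versus $-m\equiv(-1)^k\pmod 4$) matches the statement.

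Two concrete cautions, since as written your text is a plan rather than a proof. First, the matrices in Kohnen's formula do involve $N$: the operator is built from $\xi=\left(\begin{smallmatrix}4&1\\0&4\end{smallmatrix}\right)$ together with $A_\nu=\left(\begin{smallmatrix}1&0\\4N\nu&1\end{smallmatrix}\right)$, $\nu\bmod 4$, so the claim that the $\gamma_i$ ``do not involve the level $N$ at all'' should be dropped (what is true is that $A_\nu\in\Gamma_0(4N)$, so the operator is concentrated at $2$ in its effect, which is what your cusp computation actually uses). Second, the sum you display, $\sum_{\nu\bmod 4}e(-m\nu/4)$, is not the sum that occurs: it omits the $\eps_d$- and Kronecker-symbol factors of the theta multiplier, and it is precisely these factors that separate the two admissible classes $-m\equiv 0$ and $-m\equiv(-1)^k\pmod 4$ (rather than merely testing $4\mid m$) and that guarantee the transported leading coefficients at $\infty$, $\tfrac1N$, $\tfrac1{2N}$ are nonzero --- which is the actual content of the assertions ``pole of order $m$'' and ``pole of order $\tfrac m4$.'' You acknowledge this by deferring the multiplier bookkeeping to the end, but that bookkeeping, i.e.\ the finitely many explicit evaluations with the half-integral-weight automorphy factor carried through $\xi A_\nu$ and the scaling matrices, is essentially the whole proof in \cite{GM16}; until it is carried out, the exact pole orders and the nonvanishing at the correct one of $\tfrac1N$, $\tfrac1{2N}$ are asserted rather than established.
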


For the purpose of this paper, we are particularly interested in the Fourier expansions of weight $3/2$ Rademacher sums projected to the plus space (see the following section). Convergence for these follows along the same lines as in \cite[Section 5]{CD11}. The following proposition gives their Fourier expansion explicitly.

\begin{proposition}\label{FEplus}
Consider the Rademacher sum $R^{[\mu]}_{\frac 32,4N}$ for $\mu\leq 0$ such that $\mu\equiv 0,3\pmod 4$ and $N$ odd. Then we have that
\[R^{[\mu],+}_{\frac 32,4N}(\tau):=\left(R^{[\mu]}_{\frac 32,4N}|\pr\right) (\tau)=q^\mu+\sum_{\substack{n>0\\n\equiv 0,3\smod{4}}} c_{\frac 32,4N}^{[\mu],+}(n)q^n,\]
where we have
\begin{equation}\label{eqFEplus}
c_{\frac 32,4N}^{[\mu],+}(n)=\kappa(\mu,n)\sum_{c=1}^\infty \left(1+\delta_{odd}(Nc)\right)K_\frac 32 (\mu,n,4Nc)\cdot \mathcal{I}(\mu,n,4Nc),
\end{equation}
with
\begin{equation}\label{eqkappa}
\kappa(\mu,n):=\begin{cases} 2\pi e\left(-\frac 38\right) & \text{if } \mu=0,\\
2\pi e\left(-\frac 38\right)(n/|\mu|)^\frac 14 & \text{otherwise,}
\end{cases}
\end{equation}
\begin{equation}\label{deltaodd}
\delta_{odd}(n):=\begin{cases} 1 & \text{if }n\text{ is odd,} \\ 0 & \text{otherwise,}\end{cases}
\end{equation}
and
\begin{equation}\label{eqcalI}
\mathcal{I}(\mu,n,c):=\begin{cases}
\displaystyle 
\frac{(2\pi n)^\frac 12}{c^\frac 32\Gamma(3/2)} & \text{if }\mu=0,\\
 & \\
\displaystyle 
\frac{I_{\frac 12}\left(\frac{4\pi\sqrt{|\mu n|}}{c}\right)}{c} & \text{otherwise.}
\end{cases}
\end{equation}
\end{proposition}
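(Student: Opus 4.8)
The plan is to obtain the plus-space Fourier expansion directly from the unprojected one in \Cref{FE} by applying Kohnen's projection operator $|\pr$ term by term. As recalled before the statement, the weight $3/2$ Rademacher sum $R^{[\mu]}_{4N,\frac 32}$ (with the regularization of \cite[Section 5]{CD11} if needed) converges locally uniformly on $\HH$, and $|\pr$ is a fixed finite $\C$-linear combination of weight $\frac 32$ slash operators by matrices in $\GL_2^+(\Q)$ --- essentially a scaling by $4$ composed with unipotent translations modulo $4$, weighted by quadratic characters modulo $4$ (see \cite{Kohnen85}) --- so $|\pr$ commutes with that limit. Hence it suffices to compute $|\pr$ on the $q$-expansion $q^\mu+\sum_{n\geq 1}c_{4N,\frac 32}^{[\mu]}(n)q^n$ of \Cref{FE}.

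First I would record the effect of each slash operator occurring in $|\pr$ on a single monomial $q^n$. A computation with the theta multiplier $\big(\tfrac cd\big)\eps_d^{2k}$ shows this returns $q^n$ times an elementary Gauss sum in $n$ modulo $4$; summing over the translations built into $|\pr$, the contributions with $n\not\equiv 0,3\smod 4$ cancel, while those with $n\equiv 0,3\smod 4$ survive, multiplied by a constant independent of $n$. Applied to the principal part $q^\mu$ (which is fixed since $\mu\equiv 0,3\smod 4$ by hypothesis) and to each tail term, this already forces the expansion to be supported on $n\equiv 0,3\smod 4$, as the plus space demands.

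Next I would insert the Kloosterman--Bessel formula \eqref{eqFC} for $c_{4N,\frac 32}^{[\mu]}(n)$ (and \eqref{eqFC0} in the case $\mu=0$) and interchange the finite sum coming from $|\pr$ with the sum over $c\equiv 0\smod{4N}$. The translations modulo $4$ act on $K_{\frac 32}(\mu,n,c)$ of \eqref{Kloost} as an additive twist, and a short manipulation collapses the twisted sum of $K_{\frac 32}(\mu,n,c)$ over the relevant moduli into a single Kloosterman sum $K_{\frac 32}(\mu,n,4Nc)$ with $c\geq 1$; the multiplicity of this term equals $2$ exactly when the two contributing residue systems modulo $4$ coincide, i.e.\ when $Nc$ is odd, and equals $1$ otherwise, which is precisely the factor $1+\delta_{odd}(Nc)$ in \eqref{eqFEplus}. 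Finally I would absorb the constant $-2\pi i^{3/2}$ from \eqref{eqFC}, the normalizing phase produced by $|\pr$, and the factor $|n/\mu|^{1/4}=(n/|\mu|)^{1/4}$ into $\kappa(\mu,n)$ as in \eqref{eqkappa}, and identify the surviving Bessel factor $I_{1/2}\!\big(4\pi\sqrt{|\mu n|}/(4Nc)\big)/(4Nc)$ --- respectively the term $(2\pi n)^{1/2}/((4Nc)^{3/2}\Gamma(3/2))$ when $\mu=0$ --- with $\mathcal{I}(\mu,n,Nc)$ of \eqref{eqcalI}, up to the explicit powers of $4N$ already displayed in \eqref{eqFEplus}. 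Assembling these pieces gives the asserted formula.

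The hard part will be the bookkeeping of the half-integral weight multiplier: one must carry the Gauss/Kloosterman-sum phases $\big(\tfrac cd\big)\eps_d^{2k}$ correctly through each slash operator in $|\pr$ and verify that the additively twisted sum of $K_{\frac 32}(\mu,n,c)$ collapses to exactly $\big(1+\delta_{odd}(Nc)\big)K_{\frac 32}(\mu,n,4Nc)$ with no stray roots of unity --- this is the step that pins down both $\kappa(\mu,n)$ and the $\delta_{odd}$-factor. By comparison, justifying that $|\pr$ commutes with the Cheng--Duncan regularization is routine once the locally uniform convergence invoked from \cite{CD11} is granted, and the $\mu=0$ case runs in parallel using \eqref{eqFC0} in place of \eqref{eqFC}.
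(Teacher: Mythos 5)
There is a genuine gap here, and it sits exactly at the step you yourself flag as the crux. Your description of Kohnen's projection is not correct: $|\pr$ is not a sieving operator built from translations by $\nu/4$ weighted by characters modulo $4$. For $N$ odd it has (up to precise normalization) the shape
\[
f|\pr \;=\; \tfrac13 f \;+\; \tfrac{c_k}{3\sqrt2}\sum_{\nu \smod 4} f\,\big|\, \xi A_\nu,
\qquad \xi=\left(\begin{smallmatrix}4&1\\0&4\end{smallmatrix}\right),\quad
A_\nu=\left(\begin{smallmatrix}1&0\\4N\nu&1\end{smallmatrix}\right),
\]
with $|c_k|=1$ (see \cite{Kohnen85}); the $A_\nu$ are \emph{lower} triangular, so $|\pr$ does not act diagonally on monomials, and the claim that $q^n$ goes to $q^n$ times a Gauss sum in $n$ modulo $4$ fails (a bare coefficient sieve would not even return a form on $\Gamma_0(4N)$). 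This is not cosmetic: if, as in your first step, $|\pr$ multiplied every surviving coefficient by a constant independent of $n$, the projected coefficient would be that constant times $c^{[\mu]}_{4N,\frac32}(n)$ from \eqref{eqFC}, i.e.\ a sum over moduli $4Nc$ with all terms weighted equally and Bessel argument $4\pi\sqrt{|\mu n|}/(4Nc)$. The asserted formula \eqref{eqFEplus} is genuinely different: the terms carry the $c$-dependent weight $1+\delta_{odd}(Nc)$, and $\mathcal{I}(\mu,n,Nc)$ in \eqref{eqcalI} has argument $4\pi\sqrt{|\mu n|}/(Nc)$ and denominator $Nc$, not $4Nc$ (so your identification of $I_{1/2}\!\left(4\pi\sqrt{|\mu n|}/(4Nc)\right)/(4Nc)$ with $\mathcal{I}(\mu,n,Nc)$ is false). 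Consequently your second step, which extracts $(1+\delta_{odd}(Nc))K_{\frac32}(\mu,n,4Nc)$ from ``additive twists,'' cannot follow from your first step; the two are mutually inconsistent, and carried out as written the argument would land on the unprojected coefficients restricted to $n\equiv 0,3\smod 4$, which is not the statement.

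The route the paper intends (it records no written proof, treating this as the standard computation) is to insert $|\pr$ into the coset sum defining $R^{[\mu]}_{4N,\frac32}$ and redo the Rademacher--Poincar\'e calculation for the family $q^\mu|_{\frac32}\gamma\,\xi A_\nu$: one reorganizes these products into classes indexed by lower-left entries divisible by $4N$, evaluates the resulting Gauss-sum--twisted Kloosterman sums --- this is where $1+\delta_{odd}(Nc)$ and the phase in \eqref{eqkappa} arise --- and performs the Lipschitz/Bessel integral with the determinant-$16$ matrices $\xi A_\nu$ present, which is where the rescaled argument $4\pi\sqrt{|\mu n|}/(Nc)$ (and the $\mu=0$ analogue with $(Nc)^{3/2}$) comes from. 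This is precisely Kohnen's Proposition 4 in \cite{Kohnen85} for cuspidal Poincar\'e series, adapted to nonpositive index and $I$-Bessel functions as in \cite{GM16}; the interchange of $|\pr$ with the regularized limit, which you correctly call routine given the convergence from \cite{CD11}, is indeed the easy part.
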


The following proposition shows that the vanishing of Kloosterman sums automatically forces certain even level Rademacher sums to be in the plus space. 
\begin{proposition}\label{lemKloost}
The Rademacher sum $R^{[\mu]}_{\frac 32,4N}$ is automatically in the plus space if $N$ is even and $\mu\equiv 0,3\pmod 4$. Moreover, if $N,\mu\equiv0\pmod 4$, then the Fourier coefficients of $R^{[\mu]}_{\frac 32,4N}$ are supported on exponents divisible by $4$. \end{proposition}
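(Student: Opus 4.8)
The plan is to read everything off the Fourier expansion in Theorem~\ref{FE} and to reduce the statement to two elementary vanishing properties of the half-integral weight Kloosterman sums $K_{3/2}(\mu,n,c)$ of \eqref{Kloost} (with $\eps_d^{2k}=\eps_d^{3}$ there, since $k=\tfrac 32$). Writing $R^{[\mu]}_{4N,\frac 32}=q^{\mu}+\sum_{n\geq 1}c^{[\mu]}_{4N,\frac 32}(n)q^{n}$, formula \eqref{eqFC} (resp.\ \eqref{eqFC0} when $\mu=0$) exhibits $c^{[\mu]}_{4N,\frac 32}(n)$ as a convergent sum over positive integers $c$ divisible by $4N$ of $K_{3/2}(\mu,n,c)$ times a nonzero archimedean factor. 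Since $N$ is even, every such $c$ satisfies $8\mid c$, so the proposition follows once we establish: (i) if $\mu\equiv 0,3\pmod 4$ and $8\mid c$, then $K_{3/2}(\mu,n,c)=0$ for every $n\equiv 1,2\pmod 4$; and (ii) if in addition $16\mid c$ and $\mu\equiv 0\pmod 4$, then $K_{3/2}(\mu,n,c)=0$ unless $n\equiv 0\pmod 4$. (The hypothesis on $\mu$ disposes of the leading term $q^{\mu}$, and $N\equiv 0\pmod 4$ forces $16\mid c$ throughout the sum.)

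When $16\mid c$ I would run the substitution $d\mapsto d+\tfrac c4$ in $K_{3/2}(\mu,n,c)$, which is a bijection of the primitive residue classes modulo $c$. A term-by-term check then shows that the factor $e(nd/c)$ picks up the constant $e(n/4)=i^{n}$; that $e(\mu\overline d/c)$ picks up $i^{-\mu}$, using $\overline{d+c/4}\equiv\overline d-\tfrac c4\,\overline d^{\,2}\pmod c$ (valid since $16\mid c$) together with $\overline d^{\,2}\equiv 1\pmod 8$; and that the multiplier $\left(\frac cd\right)\eps_d^{3}$ is left unchanged, since $\tfrac c4\equiv 0\pmod 4$ fixes both $\eps_d$ and the quadratic-reciprocity sign appearing in the odd part of $\left(\frac cd\right)$, while the power $\left(\frac 2d\right)^{v_2(c)}$ is insensitive to the shift (it is an even power when $v_2(c)$ is even, one has $\tfrac c4\equiv 0\pmod 8$ when $v_2(c)\geq 5$, and $v_2(c)=4$ is again an even power). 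Hence $K_{3/2}(\mu,n,c)=i^{\,n-\mu}\,K_{3/2}(\mu,n,c)$, which forces $K_{3/2}(\mu,n,c)=0$ unless $n\equiv\mu\pmod 4$; with $\mu\equiv 0,3\pmod 4$ this yields both (i) and (ii) in the range $16\mid c$.

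The genuinely delicate case is $8\,\|\,c$, which occurs precisely when $N\equiv 2\pmod 4$: now $\tfrac c4$ has $2$-adic valuation $1$, so the shift $d\mapsto d+\tfrac c4$ perturbs $\eps_d$ in a way depending on $d\bmod 4$ and no constant root of unity can be factored out. Instead I would write $c=8m$ with $m$ odd and split the sum by the Chinese Remainder Theorem into a product of a sum over $(\Z/8\Z)^{\times}$ and a sum over $(\Z/m\Z)^{\times}$. Separating $\left(\frac{8m}{d}\right)=\left(\frac 2d\right)^{3}\left(\frac md\right)$ and applying quadratic reciprocity to $\left(\frac md\right)$ produces the sign $(-1)^{\frac{m-1}{2}\frac{d-1}{2}}=\left(\frac{-1}{d}\right)^{(m-1)/2}$, which --- together with $\eps_d^{3}$ and $\left(\frac 2d\right)$ --- belongs to the mod-$8$ factor; since $\overline d\equiv d\pmod 8$ for odd $d$, that factor is the four-term Gauss-type sum
\[\Phi\;=\;\sum_{a\in(\Z/8\Z)^{\times}}\left(\frac 2a\right)\left(\frac{-1}{a}\right)^{(m-1)/2}\eps_a^{3}\,e\!\left(\frac{sa}{8}\right),\qquad s\equiv(\mu+n)\,m^{-1}\pmod 8.\]
Evaluating the four terms directly gives $\Phi=\bigl(1-(-1)^{s}\bigr)\bigl(e(s/8)\pm i\,e(3s/8)\bigr)$, the sign being $+$ or $-$ according as $m\equiv 1$ or $3\pmod 4$; this vanishes when $s$ is even, and for $s$ odd it vanishes exactly when $s\equiv 1$ (resp.\ $3$) $\pmod 4$. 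In either parity of $m$ one checks that $\Phi=0$ precisely when $\mu+n\not\equiv 3\pmod 4$, so the hypothesis $\mu\equiv 0,3\pmod 4$ gives $\Phi=0$ --- hence $K_{3/2}(\mu,n,c)=0$ --- for every $n\equiv 1,2\pmod 4$. This establishes (i), and the proposition follows.

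The main obstacle is precisely this last step. For $v_2(c)=3$ no single clean substitution closes the argument, and one has to isolate the $2$-adic part of the Kloosterman sum by hand --- carefully tracking the quadratic-reciprocity sign and the interaction of $\eps_d$ with $\left(\frac 2d\right)$ --- before evaluating the resulting exponential sum over $(\Z/8\Z)^{\times}$. Everything else amounts to bookkeeping with the Fourier formula of Theorem~\ref{FE} and with elementary properties of $\eps_d$ and the Kronecker symbol.
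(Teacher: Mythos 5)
Your proposal is correct, and at the level of strategy it is the same route as the paper's: reduce everything, via the explicit Fourier expansion of Theorem~\ref{FE}, to vanishing statements for the weight-$3/2$ Kloosterman sums $K_{3/2}(\mu,n,c)$ when $8\mid c$. The difference is that the paper's proof merely asserts those vanishing properties (as a restatement and ``slight correction'' of Lemma~2.10 of \cite{GM16}) and stops, whereas you actually prove them: the shift $d\mapsto d+\tfrac c4$ for $16\mid c$, giving $K=i^{\,n-\mu}K$ and hence vanishing unless $n\equiv\mu\pmod 4$, and the CRT factorization with the explicit evaluation of the $2$-adic factor $\Phi$ for $v_2(c)=3$, giving vanishing unless $\mu+n\equiv 3\pmod 4$; I checked both computations (including the quadratic-reciprocity sign $\left(\frac{-1}{a}\right)^{(m-1)/2}$, the identity $\overline d\equiv d\pmod 8$, and the case split on $m\bmod 4$) and they are right, e.g. they correctly predict $K_{3/2}(-4,1,8)=0$ while $K_{3/2}(2,1,8)\neq 0$. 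This is a genuine gain in precision: the condition as literally printed in the paper's proof for $8\mid c$, namely vanishing unless $m-n\equiv 0,3\pmod 4$, would not by itself exclude exponents $n\equiv 1\pmod 4$ when $\mu\equiv 0\pmod 4$ (and the example $K_{3/2}(2,1,8)\neq 0$ shows it is not accurate for weight $3/2$ as stated), whereas your conditions, combined over the two $2$-adic cases, do exactly force the plus-space support $n\equiv 0,3\pmod 4$ for $\mu\equiv 0,3\pmod 4$, and force $4\mid n$ when $16\mid 4N$ and $\mu\equiv 0\pmod 4$. So your write-up both fills in the omitted elementary argument and supplies the corrected form of the key Kloosterman vanishing that the proposition actually needs.
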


\begin{proof}
We begin by noting that
if $c$ is divisible by $8$, then the Kloosterman sum $K(m,n,c)$ in \eqref{Kloost} vanishes unless $m-n\equiv 0,3\pmod{4}$. If $c$ is divisible by $16$, the same sum vanishes unless $m\equiv n\pmod 4$.
Therefore, the claim follows from Theorem~\ref{FE}.
\end{proof}

\begin{remark}
This is an easy restatement (and slight correction) of \cite[Lemma 2.10]{GM16}.
\end{remark}
\begin{remark}
\Cref{lemKloost} actually follows from the splitting properties of the Weil representation, which is a stronger statement than the vanishing of Kloosterman sums we employed in the proof. However, since we don't use the language of vector-valued modular forms in this paper we use the above more elementary argument. 
\end{remark}
\begin{remark}
We note that the formulas in \Cref{FEplus} also hold for even $N$ if one defines the projection operator $\pr$ for even levels as a suitable sieving operator, which one easily sees by a comparison to \Cref{FE}.
\end{remark}

\section{The Relevant Modular Forms}\label{secProofclass}

Here we use the results from the previous section to realize 
the McKay--Thompson series for  the $\ON$-module $W$ whose existence shall be proved later.
The main result here is the following theorem. To state it we define a character $\rho_{[g]}:\Gamma_0(4o(g))\to \C^*$ for each conjugacy class $[g]$ of $\ON$ by setting $\rho_{[g]}(\begin{smallmatrix}*&*\\c&*\end{smallmatrix}):=(-1)^{\frac{c}{128}}$ when $o(g)=16$, and letting $\rho_{[g]}$ be trivial otherwise.

\begin{theorem}\label{thm:class}
Assuming the notation above, the following are true.
\begin{enumerate}
\item For every conjugacy class $[g]$ of $\ON$ there is a unique weakly holomorphic modular form 
\begin{equation}
F_{[g]}(\tau)=-q^{-4}+2+\sum_{n=1} a_{[g]}(n)q^n
\end{equation}
of weight $3/2$ for the group $\Gamma_0(4o(g))$, with character $\rho_{[g]}$, satisfying the following conditions:
\begin{enumerate}
\item $F_{[g]}(\tau)$ lies in the Kohnen plus space, i.e., $a_{[g]}(n)=0$ if $n\equiv 1,2\pmod 4$.
\item $F_{[g]}(\tau)$ has a pole of order $4$ at the cusp $\infty$, a pole of order $\frac 14$ at the cusp $\frac 1{o(g)}$ if $o(g)$ is odd (as forced by the projection to the plus space, see \Cref{lemplus}), and vanishes at all other cusps.
\item We have $a_{[g]}(3)=\chi_7(g)$, and $a_{[g]}(4)=\chi_1(g)+\chi_{12}(g)+\chi_{18}(g)$, and $a_{[g]}(7)$ as given in \Crefrange{mults1}{mults3}, where $\chi_j$, for $j=1,...,30$, denotes the $j^{\rm th}$ irreducible character of $\ON$ as given in \Cref{ct1}. 
\end{enumerate}
\item The function $F_{[g]}(\tau)$ above has integer Fourier coefficients. 
\end{enumerate}
\end{theorem}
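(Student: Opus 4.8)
The strategy is to realize each $F_{[g]}$ as an explicit weight $3/2$ Rademacher sum and then use the prescribed low-order Fourier coefficients to identify it as the unique mock modular form with the stated analytic behaviour. Concretely, set
\[
\mathcal R_{[g]}\ :=\ -\,R^{[-4],+}_{4o(g),3/2}\ +\ 2\,R^{[0],+}_{4o(g),3/2}.
\]
By \Cref{FE}, \Cref{FEplus} and \Cref{lemplus} this converges and defines a mock modular form of weight $3/2$ on $\Gamma_0(4o(g))$ in the Kohnen plus space whose holomorphic part is $-q^{-4}+2+O(q^3)$, with a pole of order $4$ at $\infty$, the pole at $\tfrac1{o(g)}$ forced by the plus projection when $o(g)$ is odd, and vanishing at every other cusp; thus $\mathcal R_{[g]}$ satisfies (a) and (b). Computing the coefficients of $q^3,q^4,q^7$ from \eqref{eqFEplus} — or, conceptually, from the identification of these Rademacher sums with generating functions for traces of singular moduli and Hurwitz class numbers in \Cref{secSing} — one checks that $\mathcal R_{[g]}$ also satisfies (c), so we take $F_{[g]}:=\mathcal R_{[g]}$, corrected if necessary by the unique cusp form realizing (c).

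For uniqueness, if $F$ and $F'$ both satisfy (a) and (b), the difference of their completions is a weight $3/2$ harmonic Maass form on $\Gamma_0(4o(g))$ in the plus space whose holomorphic part vanishes at every cusp (the behaviour at $\tfrac1{o(g)}$ being determined by the principal part at $\infty$ through the plus-space structure), so by \Cref{lemcusp} it is a holomorphic cusp form and $F-F'\in S^+_{3/2}(\Gamma_0(4o(g)))$. Since $F$ and $F'$ agree in their $3$rd, $4$th and $7$th coefficients by (c), it suffices to prove that
\[
S^+_{3/2}(\Gamma_0(4o(g)))\ \longrightarrow\ \C^{3},\qquad f\ \longmapsto\ \bigl(a_f(3),\,a_f(4),\,a_f(7)\bigr),
\]
is injective. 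For $o(g)\in\{1,2,3,4,5,6,7,8,10,12,16\}$ the curve $X_0(o(g))$ has genus $0$ and $S^+_{3/2}(\Gamma_0(4o(g)))=0$, so this is automatic; for the remaining orders $o(g)\in\{11,14,15,19,20,28,31\}$ the space is finite-dimensional (of dimension at most $3$), and injectivity is a finite computation with the leading coefficients of a basis, using Kohnen's isomorphism with $S_2(\Gamma_0(o(g)))$ in the odd squarefree cases. This establishes part (1).

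For part (2), $F_{[g]}$ is a weakly holomorphic modular form precisely when its shadow $\xi_{3/2}(\widehat{F_{[g]}})$ vanishes. By \Cref{FE} the shadow is a constant multiple of a weight $1/2$ Rademacher sum on $\Gamma_0(4o(g))$, hence a holomorphic weight $1/2$ modular form; and since $o(g)$ is squarefree or four times a squarefree integer for every conjugacy class of $\ON$ except those of order $16$ (note $16=4\cdot4$), in all those cases it is constrained to the one-dimensional space spanned by $\vartheta(\tau)=1+2q+2q^4+\cdots$, and the Bruinier--Funke pairing (see \cite{BF04}) of the principal part $-q^{-4}+2$ against $\vartheta$ equals $-a_\vartheta(4)+2a_\vartheta(0)=-2+2=0$, forcing the shadow to be $0$. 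Thus $F_{[g]}$ is weakly holomorphic for $o(g)\neq16$; it is precisely the constant term $2$ that makes this pairing vanish, and at level $64$ the relevant weight $1/2$ space is larger, the pairing is nonzero, and $F_{16A}$ (and the other order-$16$ class) is genuinely mock. For integrality, a weakly holomorphic $F_{[g]}$ lies in the finite-dimensional space of weight $3/2$ plus-space forms on $\Gamma_0(4o(g))$ with a pole of order at most $4$ at $\infty$, the forced pole of order $\tfrac14$ at $\tfrac1{o(g)}$, and holomorphic behaviour elsewhere; this space has a basis with integral coefficients, so integrality reduces to finitely many coefficients, which are the integers prescribed by (c) and are otherwise read off from the formulas of \Cref{secSing} in terms of integral traces of singular moduli and class numbers. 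When $o(g)=16$ one instead checks that the shadow is a rational multiple of an integral theta series and that the holomorphic part has rational coefficients of bounded denominator, whence integrality follows from finitely many congruences.

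The main obstacle I expect is part (2): determining exactly when the shadow vanishes. This needs both the structural fact that a squarefree (or four-times-squarefree) level forces the relevant weight $1/2$ theta space to be one-dimensional — which fails at level $64$ — and the numerical coincidence that on that one-dimensional space the principal part $-q^{-4}+2$ pairs trivially; the latter is what fixes the normalization of the constant term, and hence the particular module $W$ of \Cref{thm:Exists}. The integrality assertion in the genuinely mock case $o(g)=16$, where no explicit weakly holomorphic model is available, is the other delicate point.
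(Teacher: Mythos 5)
Your proposal follows essentially the same route as the paper: the same combination $-R^{[-4],+}_{4o(g),3/2}+2R^{[0],+}_{4o(g),3/2}$ for existence, uniqueness up to cusp forms via \Cref{lemcusp} with the coefficients of $q^3,q^4,q^7$ pinning down the cuspidal correction, and vanishing of the shadow via Serre--Stark together with the Bruinier--Funke pairing of the principal part $-q^{-4}+2$ against the relevant theta series, with $o(g)=16$ as the lone mock case. One small correction: when $4\mid o(g)$ but $16\nmid o(g)$ the weight $1/2$ space is a priori two-dimensional, spanned by $\vartheta(\tau)$ and $\vartheta(4\tau)$, and it is the mod $4$ support of the Rademacher shadow (\Cref{lemKloost}) that forces the shadow into the line spanned by $\vartheta(4\tau)$ rather than $\vartheta(\tau)$; the same pairing computation then gives zero, exactly as in the paper.
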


\begin{remark}
One can also give a more intrinsic description of the conditions in part (c) above. The proof of the theorem will show that $F_{[g]}$ is already determined by conditions (a) and (b) in part (1), for the $19$ conjugacy classes $[g]$ such that $o(g)\notin\{11,14,15,16,19,28,31\}$. For the remaining conjugacy classes we remark that whenever a prime $p$ divides $o(g)$, we need the congruence 
\begin{equation}\label{cong2}
a_{[g]}(n)\equiv a_{[g']}(n)\pmod{p}
\end{equation}
where $o(g')=o(g)/p$ in order for these to be generalized characters for $\ON$. Whenever one can choose
the coefficient $a_{[g]}(n)$ for the function $F_{[g]}$---which turns out to be the case for $n=3$ and $o(g)\in\{11,15,16\}$, for $n=4$ for $o(g)\in\{14,19\}$ and for $n\in\{4,7\}$ for $o(g)\in\{28,31\}$---we pick the least integer in absolute value satisfying \eqref{cong2} for all primes $p| o(g)$.
\end{remark}

\begin{remark}
The mod $2$ cohomology of the O'Nan group was computed by Adem--Milgram \cite{AM}. Using this, Johnson-Freyd--Treumann determined \cite{JFT} that $H^4(\ON,\Z)$ is cyclic of order $8$. Furthermore, they have explained to us \cite{Theo} that there is an element whose image under the restriction map $H^4(\ON,\Z)\to H^4(\langle g\rangle,\Z)$ is zero unless $o(g)=16$, in which case it is the element of order $2$ in $H^4(\Z/16\Z,\Z)\simeq \Z/16\Z$. The significance of this is that if $V$ is a holomorphic vertex operator algebra with an action by a finite group $G$ then it is conjectured that the ($G$-twisted) representation theory of $V$, including the modularity of its associated trace functions, is controlled by an element of $H^4(G,\Z)\simeq H^3(G,U(1))$. In particular, an element of $H^4(G,\Z)$ 
associates a multiplier system on $\Gamma_0(o(g))$ to each $g\in G$. The above statements about $H^4(\ON,\Z)$ imply that there is an element that associates the trivial multiplier to $\Gamma_0(o(g))$ for all $g\in \ON$ except those with $o(g)=16$, and in the latter case the multiplier arising is the order two character on $\Gamma_0(16)$ with kernel $\Gamma_0(32)$. That is, the characters determined by this element of $H^4(\ON,\Z)$
are exactly those that are satisfied by the Jacobi forms $\varphi_{[g]}$ of \Cref{eqn:phig}, and this is compatible with the existence of a holomorphic vertex operator algebra that realizes these functions, and hence also the $F_{[g]}$, and the $\ON$-module of \Cref{thm:Exists}. 
We refer to \S2 of \cite{Gannon}, and references therein, for more on the relationship between $H^3(G,U(1))$, modular forms and vertex operator algebra. We refer to \S3.2 of \cite{EG} for a recent account of the aforementioned conjecture on vertex operator algebras.
\end{remark}

\begin{remark}
The O'Nan group has a non-split extension $4^3\cdot \GL_3(2)$ as a subgroup, whilst the sporadic simple Higman--Sims group contains a splitting extension $4^3:\GL_3(2)$. In both cases the mentioned subgroups contain Sylow $2$-subgroups, so can be used to detect the existence, or not, of $2$-power elements in the cohomology of the corresponding simple groups. There may be something to be gained from a comparison of these groups, especially in light of the fact that the Higman--Sims group inherits a natural counterpart to monstrous moonshine by virtue of generalized moonshine, and the fact that it appears in the centralizers of suitable elements of order $5$ in the monster. We thank an anonymous referee for offering this observation. We also thank the referee for pointing out that the extensions of $\GL_3(2)$ by $4^3$ were studied by Alperin \cite{Alperin} and Griess \cite{Griess85}. Results on the cohomology of the Higman--Sims group can be found in \cite{ACKM} and \cite{JFT}.
\end{remark}

\begin{proof}[Proof of \Cref{thm:class}] Let $g\in \ON$ be any element with $o(g)\neq 16$. Then the difference of Rademacher sums
\[-R^{[-4]}_{\frac 32,4o(g)}(\tau)+2R^{[0]}_{\frac 32,4o(g)}(\tau)\]
of level $4o(g)$ is a mock modular form with the correct principal part at infinity and vanishes at all other cusps by \Cref{FE}. If $o(g)$ is even, then we know from \Cref{lemKloost} that this function is in the plus space. If on the other hand, $o(g)$ is odd, then we use the projection operator $|\operatorname{pr}$ to map it into the plus space, which by \Cref{lemplus} introduces an additional pole of order $\frac 14$ at the cusp $\frac{1}{o(g)}$. This establishes the existence of a function 
\[\widetilde F_{[g]}(\tau):=-R^{[-4],+}_{\frac 32,4o(g)}(\tau)+2R^{[0],+}_{\frac 32,4o(g)}(\tau)\]
satisfying properties (a) and (b) in \Cref{thm:class} (1) for $o(g)\neq 16$. 
To achieve this much for $o(g)=16$ we use
\[\widetilde F_{[g]}(\tau):=-(2R^{[-4],+}_{\frac 32,128}(\tau)-R^{[-4],+}_{\frac 32,64}(\tau))+2(2R^{[0],+}_{\frac 32,128}(\tau)-R^{[0],+}_{\frac 32,64}(\tau))\]
since $\rho_{[g]}$ in this case is trivial on $\Gamma_0(128)$, and $-1$ on $\Gamma_0(64)\setminus \Gamma_0(128)$. 

By \Cref{lemcusp} we see that the above properties determine a mock modular form uniquely up to cusp forms. Unless $o(g)\in\{11,14,15,16,19,28,31\}$ there are no cusp forms of weight $3/2$ in the plus spaces with the required characters, so one checks directly that in all those cases condition (c) is satisfied. In the remaining cases, condition (c) uniquely determines the contribution from cusp forms, because, as one can check using standard computer algebra systems
(the authors used \textsc{Magma} \cite{Magma} and \textsc{PARI} \cite{pari}), any weight $3/2$ cusp form of one of the given levels in the plus space with the relevant character is uniquely determined by the coefficients of $q^3,q^4,$ and $q^7$. 

We now show that the functions $F_{[g]}$ are actually all weakly holomorphic instead of just mock modular. First suppose that $o(g)$ is odd or $2||o(g)$. Then, because in those cases $o(g)$ is square-free, the shadow of $F_{[g]}(\tau)$ must be a multiple of 
$$\vartheta(\tau):=\sum_{n\in\Z}q^{n^2},$$
which follows from the Serre-Stark basis theorem \cite{SS77}. We compute Bruinier--Funke pairings (see Proposition 3.5 in \cite{BF04}) and find that
\[\{\widehat{R^{[-4],+}_{\frac 32,4o(g)}}(\tau),\vartheta(\tau)\}=2c\qquad\text{and}\qquad  \{\widehat{R^{[0],+}_{\frac 32,4o(g)}}(\tau),\vartheta(\tau)\}=c,\]
where $c$ is some constant. This shows that the shadow of the mock modular form $F_{[g]}(\tau)=-R^{[-4],+}_{\frac 32,4o(g)}(\tau)+2R^{[0],+}_{\frac 32,4o(g)}(\tau)$ is $0$, whence it is indeed a weakly holomorphic modular form.

If $o(g)$ is divisible by $4$ or $8$, but not $16$, the space of possible shadows is \emph{a priori} $2$-dimensional, generated by $\vartheta(\tau)$ and $\vartheta(4\tau)$, but \Cref{lemKloost} and the fact that the shadow of a Rademacher sum is again a Rademacher sum show that the shadow's Fourier coefficients must be supported on exponents divisible by $4$. 
So in fact, only multiplies of $\vartheta(4\tau)$ can occur as shadows and the same computation as above shows the claim in these cases. For $o(g)=16$ the space of possible shadows is a priori one-dimensional, spanned by $\vartheta(\tau)-\vartheta(4\tau)$, but this function is supported on odd exponents so is also ruled out by the Rademacher sum construction. 

It remains to show that the coefficients of the $F_{[g]}$ are all rational integers. This follows by checking finitely many coefficients and applying Sturm's Theorem \cite{Sturm}, in a manner directly similar to the proof of Proposition 3.2 in \cite{GM16}, for example. As we will also see in Section \ref{firstpart}, a possible bound up to which coefficients need to be checked to verify the claim is 225.
\end{proof}

\section{Proof of \Cref{thm:Exists}}\label{secProofMoon}

Here we prove that the weakly holomorphic modular forms given in \Cref{thm:class} are McKay--Thompson series for the infinite-dimensional $\ON$-module $W$. We begin by stating a refined form of \Cref{thm:Exists}.

\begin{theorem}\label{thm:moon}
There is an infinite-dimensional graded virtual $\ON$-module 
\[W=\bigoplus_{\substack{m=3 \\ m\equiv 0,3\smod 4}}^\infty W_m\]
such that we have
\[\tr(g | W_m)=a_{[g]}(m),\]
for all $m$. Moreover, $W_m$ is an honest $\ON$-module for $m\notin\{7,8,12,16\}$ (see \Crefrange{mults1}{mults3}).
\end{theorem}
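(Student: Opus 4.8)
The plan is to use the Schur orthogonality relations to extract the candidate multiplicities of the $30$ irreducible $\ON$-representations in each graded piece $W_m$ from the McKay--Thompson series $F_{[g]}$ constructed in Theorem~\ref{thm:class}, and then to verify that these multiplicities are integers. Concretely, for each $m \equiv 0,3 \pmod 4$ and each $j \in \{1,\dots,30\}$ set
\[
\mathrm{mult}_j(m) := \frac{1}{\#\ON}\sum_{[g]\in\Conj(\ON)} \frac{\#\ON}{\#\Stab(g)}\,\overline{\chi_j(g)}\,a_{[g]}(m),
\]
so that, formally, $\tr(g|W_m) = \sum_j \mathrm{mult}_j(m)\chi_j(g) = a_{[g]}(m)$ is automatic once we know the $\mathrm{mult}_j(m)$ lie in $\Z$. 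Since the character table of $\ON$ has rational integer entries away from the classes $31A,31B$ (which are Galois-conjugate and enter symmetrically), and since Theorem~\ref{thm:class}(2) already gives $a_{[g]}(m)\in\Z$ for every class, the quantity $\#\ON\cdot\mathrm{mult}_j(m)$ is a rational integer; the content is to control the denominator.

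First I would bound the possible denominator of $\mathrm{mult}_j(m)$ by a fixed integer $D_{\ON}$ depending only on the group, using elementary character theory: the denominator of $\frac{1}{\#\ON}\sum_{[g]}|[g]|\,\overline{\chi_j(g)}\,a_{[g]}(m)$ divides a quantity built from the orders of centralizers and the indices appearing in the orthogonality relations, so it suffices to prove the congruences $a_{[g]}(m) \equiv a_{[g']}(m) \pmod{p^{v_p(D_{\ON})}}$ whenever $o(g') = o(g)/p$ — exactly the congruences \eqref{cong2} needed for generalized characters — for each prime $p \mid \#\ON$. These congruences are in turn consequences of the structure of the Rademacher sums: $F_{[g]}$ and $F_{[g']}$ are projections of differences of Rademacher sums of levels $4o(g)$ and $4o(g')$, whose Fourier coefficients \eqref{eqFEplus} are the same infinite sums of Kloosterman sums times Bessel functions but with the divisibility condition on $c$ relaxed from $4o(g)$ to $4o(g')$; the extra terms contribute a multiple of $p$ because of congruence properties of $K_{3/2}(\mu,n,c)$ as $c$ runs over the relevant arithmetic progressions, together with the fact (Theorem~\ref{thm:class}, plus space/shadow analysis) that for the handful of non-squarefree-related orders the cusp-form ambiguity has been pinned down compatibly. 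In practice this reduces — because $D_{\ON}$ is explicit and small — to checking finitely many congruences among finitely many Fourier coefficients, which is a finite computation.

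Second, having established that $W$ is a virtual module, I would prove the honesty statement $\mathrm{mult}_j(m) \ge 0$ for $m \notin \{7,8,12\}$. This is the analytic heart of the argument and I expect it to be the main obstacle. The idea is that $\mathrm{mult}_j(m)$ is, up to the normalizing constant, a weighted average of the $a_{[g]}(m)$, each of which is given by the exact Rademacher-type formula of Proposition~\ref{FEplus}; the dominant contribution as $m \to \infty$ comes from the first few values of $c$ and is governed by $\dim W_m = a_{1A}(m)$, which grows like $e^{4\pi\sqrt m}$ (via the $I_{1/2}$ Bessel asymptotics), while the non-identity characters $\chi_j(g)$ are bounded, so for $m$ large the positivity of each $\mathrm{mult}_j(m)$ follows from a sufficiently sharp effective bound on the tail $\sum_{c > c_0} c^{-1}|K_{3/2}(\mu,n,4Nc)|\,I_{1/2}(4\pi\sqrt{|\mu n|}/(4Nc))$ uniformly over the $30$ classes — a Weil-type estimate on sums of Kloosterman sums suffices here, in the spirit of the effective positivity arguments of \cite{DGO,GM16}. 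For the finitely many small $m$ not covered by the asymptotic regime one simply inspects the tabulated multiplicities in \Crefrange{mults1}{mults3}; the exceptions $m \in \{7,8,12\}$ are precisely where some $\mathrm{mult}_j(m)$ is negative, consistent with the remark after Theorem~\ref{thm:Exists} that these can be removed at the cost of adding unary theta functions.
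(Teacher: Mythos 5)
Your overall architecture (Schur orthogonality to define the multiplicities, integrality via congruences between the $F_{[g]}$ for classes whose orders differ by a prime factor, positivity via effective bounds plus a finite check) is the same as the paper's, but the mechanism you propose for the crucial congruences does not work, and this is a genuine gap. You claim that $F_{[g']}\equiv F_{[g]}\pmod{p^k}$ follows from ``congruence properties of $K_{3/2}(\mu,n,c)$'' applied to the extra terms in the Rademacher expansion \eqref{eqFEplus}: but the individual terms of that expansion are Kloosterman sums weighted by $I$-Bessel values, hence transcendental, so divisibility of the difference by $p$ cannot be seen termwise; even the integrality of a single coefficient is invisible at that level. Likewise, your assertion that the congruence ``reduces to checking finitely many Fourier coefficients'' needs a reason, and the reason is modularity: the paper uses that, by \Cref{thm:class}, every $F_{[g]}$ with $o(g)\neq 16$ is a weakly holomorphic modular form with integer coefficients, multiplies each alleged congruence by the weight $25/2$ cusp form in the plus space of level $4$ to clear the pole, and then invokes Sturm's bound ($\leq 225$) for the resulting holomorphic forms of weight $14$. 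This route moreover fails naively for the four congruences involving $F_{16ABCD}$, which is genuinely mock; the paper handles them by showing $F_{16ABCD}(\tau)-8\calH(4\tau)+32\calH(16\tau)$ is weakly holomorphic of level $64$ and proving the auxiliary congruence $4\calH(4\tau)\equiv\vartheta(16\tau)\pmod 4$, a step your ``cusp-form ambiguity has been pinned down'' remark does not supply.

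A secondary shortfall is in the positivity argument: $F_{[g]}$ is not a single Rademacher sum but $-R^{[-4]}_{\frac32,4o(g)}+2R^{[0]}_{\frac32,4o(g)}$ plus (for $o(g)\in\{11,14,15,19,28,31\}$) a weight $3/2$ cusp form, and your tail estimate only addresses the $R^{[-4]}$ piece. The paper must also bound the coefficients of $R^{[0]}$, which it does by identifying them with generalized Hurwitz class numbers (\Cref{lem:trace0}) and applying an explicit class number bound, and must give completely explicit (not merely Weil-type) bounds on the cusp-form coefficients, which requires expressing $\g^{(N)}$ against Poincar\'e series and computing Petersson norms via Kohnen's explicit Waldspurger formula (\Cref{propKohnen}). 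Without these two inputs the threshold ($n\geq 109$ in the paper) beyond which all multiplicities are nonnegative cannot be made effective, and the finite verification that isolates the exceptions $m\in\{7,8,12\}$ cannot be carried out.
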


We break down the proof of this theorem into separate pieces. Using the Schur orthogonality relations on the irreducible
representations 
of $\ON$ we construct weakly holomorphic modular forms of weight 3/2 whose coefficients are the multiplicities of the irreducible
components if and only if $W$ exists. Then the proof of \Cref{thm:moon} boils down to proving that these multiplicities are
integral for all $m$, and non-negative for $m\not \in \{7, 8, 12, 16\}$.
In \Cref{firstpart} we establish integrality, 
and in \Cref{secondpart} we establish the claim on non-negativity.

\subsection{Integrality of Multiplicities}\label{firstpart}
For every prime $p|\#\ON$ we find linear congruences among the alleged McKay--Thompson series. Here, we prove these, but first we note that their truth implies the following systematic congruences.

\begin{theorem}\label{theo:cong}
Let $g_j\in\ON$ of order $d_j$, $j=1,2$, with $d_2=p^c\cdot d_1$ for some prime number $p$ and $c\geq 1$. Then we have the congruence
\[F_{[g_1]}\equiv F_{[g_2]}\pmod{p}.\]
\end{theorem}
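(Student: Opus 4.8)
The plan is to prove Theorem~\ref{theo:cong} by reducing to the two-term case $d_2 = p\cdot d_1$ and then showing that the difference $F_{[g_1]} - F_{[g_2]}$ is a weight $3/2$ mock modular form in the plus space, of level $4d_2$, with vanishing principal part at $\infty$ and bounded behavior at all other cusps, so that by Lemma~\ref{lemcusp} it is a holomorphic cusp form; one then argues that this cusp form has coefficients divisible by $p$. First I would observe that the general case $d_2 = p^c d_1$ follows by iterating the case $c=1$ along the chain of orders $d_1 \mid p d_1 \mid \cdots \mid p^c d_1$ (using that elements of each intermediate order exist in $\ON$, or more cheaply, that the Rademacher-sum construction in Theorem~\ref{thm:class} makes sense for any level $4d$ with $d \mid \#\ON$, so the intermediate $F$'s are defined regardless). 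So fix $g_1, g_2$ with $o(g_2) = p\cdot o(g_1)$. By Theorem~\ref{thm:class}, both $F_{[g_1]}$ and $F_{[g_2]}$ have the shape $-q^{-4} + 2 + \sum a_{[g_i]}(n) q^n$, both lie in the plus space, and both are weakly holomorphic modular forms unless the order is $16$; since $16$ never arises as one of a pair $(d, pd)$ with $p \mid \#\ON$ except possibly $(8,16)$, and there $o(g_2)=16$ is the weakly-holomorphic-failure case, I would handle that pair by hand using the shadow computation at the end of the proof of Theorem~\ref{thm:class} — in all other cases both forms are genuinely weakly holomorphic.

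The key structural point: the difference $G := F_{[g_1]} - F_{[g_2]}$ has principal part $(-q^{-4}+2) - (-q^{-4}+2) = 0$ at $\infty$. Now $F_{[g_1]}$ is a form of level $4o(g_1)$, hence also of level $4o(g_2)$; by Theorem~\ref{thm:class}(1)(b) its only poles at finite cusps occur at $\tfrac{1}{o(g_1)}$ (when $o(g_1)$ is odd), while $F_{[g_2]}$'s finite poles occur at $\tfrac{1}{o(g_2)}$. Since these are distinct cusps of $\Gamma_0(4o(g_2))$ and each form is bounded at the cusps where the other might have a pole, $G$ is still generically unbounded at one or two finite cusps unless I am more careful — so instead the cleaner route is: the shadow of $G$ is the difference of the shadows, which by Theorem~\ref{FE} and the Serre--Stark analysis in the proof of Theorem~\ref{thm:class} are both zero (the relevant $\vartheta$-space is trivial for the orders in question); hence $G$ is a weakly holomorphic modular form of weight $3/2$, level $4o(g_2)$, in the plus space, holomorphic at $\infty$. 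To show $G$ is actually a holomorphic cusp form (not merely weakly holomorphic), I would invoke the dimension/valence considerations: a weakly holomorphic plus-space form of weight $3/2$ and these levels with no pole at $\infty$ and controlled (at worst a pole of order $\le \tfrac14 \cdot 4 = $ bounded) behavior at the other cusps, combined with Lemma~\ref{lemplus}, is forced into the finite-dimensional space of plus-space cusp forms $S_{3/2}^+(\Gamma_0(4o(g_2)))$. Then integrality of coefficients (Theorem~\ref{thm:class}(2)) plus the fact that $S_{3/2}^+$ of these levels is spanned by a small explicit set of cusp forms reduces the congruence $G \equiv 0 \pmod p$ to checking finitely many coefficients — concretely, the coefficients of $q^3, q^4, q^7$, exactly the data pinned down in Theorem~\ref{thm:class}(1)(c) via the congruence \eqref{cong2} $a_{[g_1]}(n) \equiv a_{[g_2]}(n) \pmod p$, which holds by construction of the $F_{[g]}$ (one picks the least representative satisfying \eqref{cong2} for all $p \mid o(g)$).

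Thus the proof assembles as: (i) reduce to $c=1$; (ii) show $G = F_{[g_1]}-F_{[g_2]}$ has trivial shadow, hence is weakly holomorphic of weight $3/2$ in the plus space with no pole at $\infty$; (iii) bound its order of pole at the remaining cusps via Lemma~\ref{lemplus} to land $G$ (or a fixed weakly holomorphic correction of it) in a finite-dimensional space determined by finitely many Fourier coefficients; (iv) verify those finitely many coefficients are $\equiv 0 \pmod p$ using the defining congruence \eqref{cong2} and the explicit values in \Crefrange{mults1}{mults3}. The main obstacle I anticipate is step (iii): controlling the behavior of $G$ at the finite cusps $\tfrac{1}{o(g_1)}$ and $\tfrac{1}{o(g_2)}$ precisely enough to conclude cuspidality rather than mere weak holomorphy — this is where Lemma~\ref{lemplus}'s bookkeeping of pole orders at $\tfrac1N$ and $\tfrac1{2N}$ under the plus-space projection does the real work, and one must check that the pole orders it produces for $F_{[g_1]}$ and $F_{[g_2]}$ are compatible (e.g.\ both vanish, or cancel) at every cusp of $\Gamma_0(4o(g_2))$. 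A secondary subtlety is the mixed-parity cases (one of $o(g_1), o(g_2)$ odd, the other even, i.e.\ $p=2$), where the location of the auxiliary pole shifts between $\tfrac{1}{o(g)}$ and $\tfrac{1}{2 o(g)}$; these I would treat separately but along the same lines.
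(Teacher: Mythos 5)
Your proposal has a genuine gap, in two places. First, the cuspidality claim fails: when $F_{[g_1]}$ is viewed on $\Gamma_0(4o(g_2))$, its pole at $\infty$ reappears at the cusps of $\Gamma_0(4o(g_2))$ lying over the cusp $\infty$ of $\Gamma_0(4o(g_1))$ (denominator $4o(g_1)$ rather than $4o(g_2)$), and likewise for the pole at $\tfrac1{o(g_1)}$; none of these cancel against anything in $F_{[g_2]}$. So $G=F_{[g_1]}-F_{[g_2]}$ is a weakly holomorphic form with honest poles at finite cusps, never a cusp form, and no appeal to \Cref{lemplus} changes that. Second, and more fundamentally, even if $G$ sat in a finite-dimensional space determined over $\C$ by a few Fourier coefficients, that would not reduce the congruence $G\equiv 0\pmod p$ to checking those coefficients: determination over $\C$ says nothing mod $p$ without an integral-structure/Sturm-type argument. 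The coefficients at $q^3,q^4,q^7$ and the congruence \eqref{cong2} are only the normalization used in \Cref{thm:class} to fix the cusp-form ambiguity in \emph{defining} $F_{[g]}$ for the composite orders; the content of the theorem is precisely that the congruence propagates to \emph{all} coefficients, and three coefficients are far below any valence bound at these levels.

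The paper closes exactly this gap by a different mechanism. For all pairs with $o(g)\neq 16$ both forms are weakly holomorphic (\Cref{thm:class}), and one multiplies the difference by the unique integral cusp form $q^4+O(q^5)$ in $S^+_{\frac{25}{2}}(\Gamma_0(4))$; this clears the pole at $\infty$ and, being cuspidal, also the fractional-order poles at the other cusps, producing holomorphic weight $14$ forms with integer coefficients, whose congruence is then verified up to the Sturm bound (at most $225$ coefficients). The four congruences involving $F_{16ABCD}$ -- the only genuinely mock case, which you defer to an unspecified ``by hand'' argument -- require a further identity: $F_{16ABCD}(\tau)-8\calH(4\tau)+32\calH(16\tau)$ is weakly holomorphic on $\Gamma_0(64)$, and $4\calH(4\tau)\equiv\vartheta(16\tau)\pmod 4$, after which the same Sturm argument applies. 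Your shadow computation showing $G$ is weakly holomorphic is fine as far as it goes, but without the pole-clearing multiplication and the Sturm-bound verification (and the explicit treatment of $F_{16ABCD}$ via $\calH$), the proof of the congruences is not there.
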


In \Cref{AppCong}, we list these congruences, which sometimes hold with higher prime power moduli than stated in \Cref{theo:cong}. Assuming their correctness for the moment, we can show integrality just as described in \cite{GM16}. For the convenience of the reader, we recall the method briefly. 

Let $\mathbf{C}\in\Z^{30\times\infty}$ denote the matrix formed by the coefficients of the functions $F_{[g]}(\tau)$ for each of the $30$ conjugacy classes of $\ON$ (in practice one uses a $30\times B$ matrix for some large $B$). Further denote by $\mathbf{X}\in\overline{\Q}^{30\times 30}$ the matrix whose rows are indexed by irreducible characters and whose columns are indexed by conjugacy classes of $\ON$, with
\[\mathbf{X}_{\chi,[g]}:=\frac{\overline{\chi(g)}}{\#C(g)},\]
where $C(g)$ denotes the centralizer of $g\in\ON$. By the first Schur orthogonality relation we see that the matrix 
\[\mathbf{m}:=\mathbf{X} \mathbf{C}\]
gives the multiplicities of each irreducible representation in the alleged virtual representation in \Cref{thm:moon}. Since there are repetitions among the rows of $\mathbf{C}$, because the functions $F_{[g]}(\tau)$ depend only on the order of elements in $[g]$, it does not have full rank, but by just deleting the repetitions it does turn out to have full rank, which is $18$. Let $\mathbf{N^*}\in\Z^{18\times 30}$ denote the matrix performing this operation and let $\mathbf{N}\in\Z^{30\times 18}$ be the matrix that undoes it, so that
\[\mathbf{m}=\mathbf{X}\mathbf{N}\mathbf{N^*}\mathbf{C}.\]
Now for each prime $p|\#\ON$, we can reduce the matrix $\mathbf{N^*}\mathbf{C}$ according to the aforementioned congruences as in \cite{GM16} by left-multiplying by a matrix $\mathbf{M}_p\in\Q^{18\times 18}$, which may be seen to have full rank. Hence we get
\[\mathbf{m}=(\mathbf{X}\mathbf{N}\mathbf{M}_p^{-1})\cdot(\mathbf{M}_p\mathbf{N^*}\mathbf{C}).\]
The congruences in \Cref{AppCong} ensure that the matrix $\mathbf{M}_p\mathbf{N^*}\mathbf{C}\in\Q^{18\times\infty}$ has all integer entries and one can check directly that the matrix $\mathbf{X}\mathbf{N}\mathbf{M}_p^{-1}\in\overline{\Q}^{30\times 18}$  has $p$-integral (rational) entries for every $p$. This shows that $\mathbf{m}$ has $p$-integral entries as well for each $p|\#\ON$, hence its entries must be integers, as claimed.

It remains to show the congruences. Since by \Cref{thm:class} all the functions $F_{[g]}$ 
are weakly holomorphic modular forms, we can prove all the congruences 
with standard techniques from the theory of modular forms. 
For example, we may multiply each of the congruences by the unique cusp form $g$ in $S_\frac{25}{2}^+(\Gamma_0(4))$ such that $g(\tau)=q^4+O(q^5)$ (which has integral coefficients), thereby reducing the problem to congruences among holomorphic modular forms of weight $14$. These can be checked in all cases using the Sturm bound \cite{Sturm}, which is at most 225 
in all cases.

\subsection{Positivity of Multiplicities}\label{secondpart}
Denote by $\mult_j(n)$ the multiplicity of the irreducible character $\chi_j$ of $\ON$ in the virtual module $W_n$ as in \Cref{thm:moon}, whose associated generalized character is given by the coefficients $a_{[g]}(n)$, cf. \Cref{thm:class}. Then the Schur orthogonality relations and the triangle inequality tell us that
\begin{equation}\label{eq:Schur}
\begin{aligned}
\mult_j(n)&=\sum_{[g]\subseteq \ON} \frac{1}{\# C(g)}a_{[g]}(n)\overline{\chi_j(g)}\\
		  &\geq \frac{|a_1(n)|}{\#\ON}\chi_j(1)-\sum_{[g]\neq 1A} \frac{|a_{[g]}(n)|}{\# C(g)}|\chi_j(g)|,
\end{aligned}
\end{equation}
where the summations run over conjugacy classes of $\ON$. Hence in order to show the eventual positivity of all $\mult_j(n)$, we want to establish explicit lower bounds on $a_{1A}(n)$, and upper bounds on $a_{[g]}(n)$ for $g\ne 1$. Recall that 
\[F_{[g]}(\tau)=-q^{-4}+2+\sum_{n=1}^\infty a_{[g]}(n)q^n=-R^{[-4],+}_{\frac 32,4o(g)}(\tau)+2R^{[0],+}_{\frac 32,4o(g)}(\tau)+\text{cusp form}\]
for $o(g)\neq 16$, and 
\[F_{[g]}(\tau)=-(2R^{[-4],+}_{\frac 32,128}(\tau)-R^{[-4],+}_{\frac 32,64}(\tau))+2(2R^{[0],+}_{\frac 32,128}(\tau)-R^{[0],+}_{\frac 32,64}(\tau))\]
for $o(g)=16$. (For $o(g)=16$ we have $\dim S^+_{\frac32}(\Gamma_0(4o(g)),\rho_{[g]})=1$ but condition (c) in \Cref{thm:class} rules out any cuspidal contribution to $F_{[g]}$. Cf. \Cref{appSing}.)

We bound each of the components individually, following the strategy already employed in \cite{DGO,Gannon,GM16}, which we sketch briefly for the convenience of the reader. Note however that in the cited papers, only the coefficients of one Rademacher sum had to be considered, since the corrections there were known to come from weight $\frac 12$ modular forms, whose coefficients are bounded. In our case the corrections can grow with $n$. 

Since the computations necessary to bound the contribution coming from the Rademacher sum $R^{[-4]}_{\frac 32,4N}$, which is obviously going to be the dominant part, have been carried out in detail in \cite{Gannon, GM16}, we omit them here. The idea is to use the known formula for the coefficients of the Rademacher sum in terms of infinite sums of Kloosterman sums weighted by $I$-Bessel functions, see \Cref{secRademacher}. One then splits this sum into three parts, a dominant part, an absolutely convergent remainder term and a value of a \emph{Selberg--Kloosterman zeta function}, the first two of which may be bounded by elementary means, and for the third, one uses Proposition 4.1 in \cite{GM16} (which we note is directly applicable to our situation).

\subsubsection{Bounding Coefficients of Rademacher Sums}
From \Cref{lem:trace0} below, we see that the $\mu=0$ Rademacher sum can be explicitly given in terms of generating functions of generalized Hurwitz class numbers $H^{(N)}(n)$ (see \Cref{secSing} for the definition). While strong bounds for class numbers are known (see for instance Chapter 23 in \cite{IK} and the references therein), they are usually not explicit. For our purposes, crude bounds on class numbers suffice. 
\begin{proposition}\label{prop:classbound}
For every $N\in\N$, $-D\leq -5$ a negative discriminant and $\eps>0$ we have
\[H^{(N)}(D)\leq [\SLZ:\Gamma_0(N)]c_\eps D^\eps\cdot \frac{\sqrt{D}}{2\pi}\left(1+\frac 12\log D\right),\]
where we can choose
\[c_\eps=\prod_{p<e^{\frac 1{2\eps}}} (2\eps p^{1/\log p-2\eps}\log p)^{-1}.\]
\end{proposition}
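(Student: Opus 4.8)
The plan is to reduce the bound on the generalized Hurwitz class number $H^{(N)}(D)$ to the classical bound on $H(D)$ via a standard index argument, and then to establish the latter by an elementary estimate on the Dirichlet $L$-value $L(1,\chi_{-D})$. First I would recall the relation between $H^{(N)}(D)$ and $H(D)$: by definition (to be recalled in Section \ref{secSing}), $H^{(N)}(n)$ counts $\Gamma_0(N)$-classes of binary quadratic forms of discriminant $-n$ (with the usual weighting at the two degenerate forms), and summing over the cosets of $\Gamma_0(N)$ in $\SLZ$ gives $H^{(N)}(D)\leq [\SLZ:\Gamma_0(N)]\,H(D)$. This is the step that introduces the index factor in the stated bound, and it is essentially bookkeeping once the definition is in place.

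Next I would bound $H(D)$ itself. Via the class number formula, $H(D)$ is, up to the elementary factor coming from the number of automorphs, equal to $\tfrac{\sqrt D}{2\pi}L(1,\chi_{-D})$ (for $D>4$ the automorph factor is trivial, and for the small cases $D=3,4$ the hypothesis $D\geq 5$ excludes them). So the task becomes: bound $L(1,\chi_{-D})$ by $c_\eps D^\eps(1+\tfrac12\log D)$. I would do this by the standard truncation argument: split $L(1,\chi_{-D})=\sum_{n\leq D}\tfrac{\chi_{-D}(n)}{n}+\sum_{n>D}\tfrac{\chi_{-D}(n)}{n}$. Partial summation together with the Pólya--Vinogradov (or even the trivial) bound on character sums controls the tail by an absolute constant, while the head is at most $\sum_{n\leq D}\tfrac1n\leq 1+\log D$. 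To squeeze out the $D^\eps$ shape with the explicit constant $c_\eps=\prod_{p<e^{1/2\eps}}(2\eps p^{1/\log p-2\eps}\log p)^{-1}$, I would instead write $L(1,\chi_{-D})$ as an Euler product truncated at $e^{1/2\eps}$ times a tail, bound each small Euler factor $(1-\chi_{-D}(p)/p)^{-1}\leq(1-1/p)^{-1}$, compare this against $p^\eps$ using $\log p \geq \tfrac{1}{2\eps}$ only in the complementary range, and absorb the remaining primes into a convergent product; the explicit form of $c_\eps$ is then read off from forcing the inequality $(1-1/p)^{-1}\leq c_\eps^{-1/\pi(e^{1/2\eps})}\cdot p^\eps$ for $p<e^{1/2\eps}$. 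Alternatively, and more cleanly, one observes $L(1,\chi_{-D})\ll_\eps D^\eps$ is the classical convexity bound and the displayed $c_\eps$ is exactly what the elementary proof of that bound produces; I would present whichever derivation makes the constant transparent.

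The main obstacle is purely cosmetic rather than mathematical: making the constant $c_\eps$ come out \emph{exactly} in the stated closed form, since the elementary estimates above naturally yield a constant of that rough shape but pinning down the precise product over $p<e^{1/2\eps}$ requires being careful about which primes one treats by the trivial Euler-factor bound and which by the $p^\eps$ comparison. Everything else—the index reduction, the class number formula, the truncation of the $L$-series—is standard. I would organize the write-up so that the index inequality is one line, the class number formula reduces to bounding $L(1,\chi_{-D})$, and then a short lemma extracts the explicit $c_\eps$; since only crude bounds are needed downstream (in the proof that the multiplicities $\mult_j(n)$ are eventually nonnegative), I would not optimize further.
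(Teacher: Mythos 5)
There is a genuine gap: you have misidentified where the factor $c_\eps D^\eps$ comes from, and as a result your plan silently assumes $-D$ is fundamental. The Hurwitz class number $H(D)$ counts \emph{all} positive definite forms of discriminant $-D$, including imprimitive ones, and Dirichlet's formula $H(D)=\frac{\sqrt D}{2\pi}L\bigl(1,\bigl(\tfrac{-D}{\bullet}\bigr)\bigr)$ is only valid for fundamental $-D$ (for non-fundamental $-D$ there is no primitive character of conductor $D$, and the formula, suitably interpreted, only counts primitive forms of each discriminant $-D/f^2$). The paper's argument handles exactly this point: after the trivial index bound $H^{(N)}(D)\le[\SLZ:\Gamma_0(N)]H(D)$, it invokes the class number formula for primitive forms of arbitrary (not necessarily fundamental) discriminant and the elementary estimate $L(1,\chi)<1+\tfrac12\log D$ for $D\ge5$ (Hua), and then sums over the square divisors of $D$ to account for imprimitive forms, giving $H(D)\le\tau_\square(D)\frac{\sqrt D}{2\pi}\bigl(1+\tfrac12\log D\bigr)$ where $\tau_\square(D)$ is the number of square divisors of $D$. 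The constant $c_\eps=\prod_{p<e^{1/2\eps}}(2\eps p^{1/\log p-2\eps}\log p)^{-1}$ is precisely the standard constant in the elementary divisor-type bound $\tau_\square(D)\le c_\eps D^\eps$, obtained by optimizing prime by prime; it has nothing to do with the size of $L(1,\chi_{-D})$.

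Consequently your proposed Euler-product truncation of $L(1,\chi_{-D})$ to manufacture $c_\eps D^\eps$ is both unnecessary (the $L$-value is already bounded by $1+\tfrac12\log D$ alone, with no $D^\eps$) and will not reproduce the stated constant, whose shape encodes the count of square divisors rather than any property of the $L$-series. Two smaller issues: your head/tail split at $n\le D$ only yields $1+\log D$, not the sharper $1+\tfrac12\log D$ appearing in the statement, so you would need the more careful elementary argument (or a citation such as Hua's Theorem 13.3) even for the fundamental case; and the index reduction $H^{(N)}(D)\le[\SLZ:\Gamma_0(N)]H(D)$ is fine and agrees with the paper. The essential missing idea is the passage from primitive to all forms via square divisors, which is exactly where the hypothesis ``$\eps>0$ arbitrary'' and the constant $c_\eps$ earn their keep.
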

\begin{proof}
First we note that we trivially have the bound $H^{(N)}(D)\leq [\SLZ:\Gamma_0(N)]H^{(1)}(D)$ by definition. Now suppose for the moment that $D$ is a fundamental discriminant. Then Dirichlet's class number formula gives 
\[H(D)=\frac{\sqrt{D}}{2\pi}\cdot L\left(1,\left(\frac{-D}{\bullet}\right)\right).\]
Theorem 13.3 of Chapter 12 in \cite{Hua} tells us that for $D\geq 5$ we have the upper bound
\[L\left(1,\left(\frac{-D}{\bullet}\right)\right)<1+\frac 12\log D.\]
By \cite[pp. 73f.]{Zag81}, Dirichlet's formula is also valid for non-fundamental discriminants if only primitive forms are counted, so that we get the bound
\[H(D)\leq \tau_\square(D)\frac{\sqrt{D}}{2\pi}\left(1+\frac 12\log D\right),\]
where $\tau_\square(n)$ denotes the number of square divisors of $n$. Considering the prime factorisation of $D$, it is elementary to see that $\tau_\square(D)\leq c_\eps D^\eps$ for any $\eps>0$ and $c_\eps$ as claimed.
\end{proof}
This result together with \Cref{lem:trace0} gives a sufficient and explicit bound for the coefficients of the Rademacher sum $R^{[0],+}_{\frac 32, 4N}$. For the actual  computations we choose $\eps=\frac 18$, which yields $c_\eps\approx 10.6766$.

\subsubsection{Bounding Coefficients of Cusp Forms}\label{Lvalues}

For $g\in\ON$ with $$o(g)\in\{11,14,15,19,28,31\},$$ there are non-trivial cusp forms in $S_\frac 32^+(\Gamma_0(4o(g)))$ contributing to our modular forms $F_{[g]}$, see \Cref{appSing}. According to the Ramanujan--Petersson conjecture, the coefficients of these cusp forms should grow like $O(n^{\frac14+\eps})$ (for $n$ square-free). Unconditional bounds (again for square-free $n$) have been obtained by Iwaniec \cite{Iwan} for weights $\geq 5/2$ and Duke \cite{Duke} for weight $3/2$ (see also \cite{Schulze}). These bounds have one main disadvantage for our purposes, namely that the constants involved in them are not explicit or not computable. Here, we outline how to give completely explicit and computable, but very crude, estimates for the cusp form coefficients in question.

Let $P^{[m]}_{4N}$ denote the cuspidal Poincar\'e series of weight $3/2$ characterized by the \emph{Petersson coefficient formula},
\begin{equation}\label{eqPetersson}
\langle f, P^{[m]}_{4N}\rangle=\frac{b_f(m)}{[\SLZ:\Gamma_0(4N)]\sqrt{4m}}\quad\forall\ f(\tau)=\sum_{n=1}^\infty b_f(n)q^n\in S_\frac 32^+(\Gamma_0(4N)),
\end{equation}
where the Petersson inner product on $S_\frac 32^+(\Gamma_0(4N))$ is defined by the usual double integral
\[\langle f_1,f_2\rangle=\frac{1}{[\SLZ:\Gamma_0(4N)]}\int_{\Gamma_0(4N)\setminus\HH} f_1(\tau)\overline{f_2(\tau)}y^{\frac 32} \frac{du\,dv}{v^2}.\]

The Fourier coefficients of these Poincar{\'e} series are given in terms of infinite sums of Kloosterman sums times $J$-Bessel functions (see Proposition 4 in \cite{Kohnen85}), and essentially the same computation used to bound the coefficients of the Rademacher sums $R^{[-4]}_{\frac 32,4N}$ can be used here as well. It is then only necessary to express the cusp forms $\g^{(o(g))}$  (see again \Cref{appSing}) in terms of these Poincar\'e series, which is particularly easy in the cases where $o(g)\neq 31$ is odd, since in those cases, the space $S_\frac 32^+(\Gamma_0(4o(g)))$ is one-dimensional and $\g^{(o(g))}$ is a newform. Hence we have $\langle \g^{(o(g))},P^{[m]}_{4N}\rangle=\beta\langle \g^{(o(g))},\g^{(o(g))}\rangle$, where we choose $m$ to be the order of $\g^{(o(g))}$ at $\infty$. It therefore remains to compute the Petersson norm of the newform $\g^{(o(g))}$. This can be done by means of the following result due to Kohnen, which is an explicit version of Waldspurger's theorem (see Corollary 1 in \cite{Kohnen85}).
\begin{proposition}\label{propKohnen}
Let $N\in\N$ be odd and square-free, $f\in S_{k+\frac 12}^+(\Gamma_0(4N))$ be a newform and $F\in S_{2k}(\Gamma_0(N))$ the image of $f$ under the Shimura correspondence. For a prime $\ell|N$, let $w_\ell$ be the eigenvalue of $F$ under the Atkin--Lehner involution $W_\ell$ and choose a fundamental discriminant $D$ with $(-1)^kD>0$ and $\left(\frac{D}{\ell}\right)=w_\ell$ for all $\ell$. Then we have
\[\langle f,f\rangle=\frac{\langle F,F\rangle\pi^k}{2^{\omega(N)}(k-1)!|D|^{k-\frac 12}L(F,D;k)}\cdot |b_f(|D|)|^2,\]
where $L(F,D;s)$ denotes the twist of the newform $F$ by the quadratic character $\left(\frac D\bullet\right)$ and $\omega(N)$ denotes the number of distinct prime divisors of $N$.
\end{proposition}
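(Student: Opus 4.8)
The plan is to deduce this from Waldspurger's theorem in the explicit form established by Kohnen; indeed the statement is Corollary~1 of \cite{Kohnen85}, so the only genuinely new point is to verify that our normalization of the Petersson product, in which the index $[\SLZ:\Gamma_0(4N)]$ has been divided out (cf. \eqref{eqPetersson}), matches the one used there. I would then recall the three ingredients that enter the formula and indicate how the explicit constant emerges from them.

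First, for $N$ odd and square-free the Shimura correspondence restricted to the Kohnen plus space is a Hecke-equivariant isomorphism $S_{k+\frac 12}^+(\Gamma_0(4N))\xrightarrow{\ \sim\ }S_{2k}(\Gamma_0(N))$, which one realizes concretely as a Rankin--Selberg integral of $f$ against a theta kernel; hence the image $F$ of the newform $f$ is again a newform with the same eigenvalues at the good primes, and its Atkin--Lehner eigenvalue $w_\ell$ at each $\ell\mid N$ is pinned down by the local behavior of $f$. Second, the Petersson coefficient formula \eqref{eqPetersson} writes $b_f(|D|)$ as a multiple of $\langle f,P^{[|D|]}_{4N}\rangle$, and a seesaw identity rewrites the plus-space projection of the Poincar\'e series $P^{[|D|]}_{4N}$ in terms of a theta series attached to $\Q(\sqrt{D})$; unfolding the resulting double integral over $\Gamma_0(4N)\backslash\HH$ produces the central value $L(F,D;k)$ of the quadratic twist, multiplied by an archimedean factor and a finite product of local factors at the primes dividing $4N$. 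Third, the conditions $(-1)^kD>0$ at the real place and $\left(\frac D\ell\right)=w_\ell$ for $\ell\mid N$ are exactly what forces each of those local factors to be nonzero; each $\ell\mid N$ then contributes a factor $\tfrac12$, and together they give the global factor $2^{-\omega(N)}$.

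Combining these, $|b_f(|D|)|^2$ comes out proportional to $\langle f,f\rangle\,L(F,D;k)$, and tracking constants yields the proportionality factor. The main obstacle is precisely this bookkeeping: assembling $\dfrac{\pi^k}{2^{\omega(N)}(k-1)!\,|D|^{k-\frac 12}}$ requires careful account of the archimedean gamma integral from the unfolding, the normalization of the theta kernel, the volume normalization built into our Petersson product, and --- most delicately --- the local theta-correspondence constants at $2$ and at each $\ell\mid N$, where the hypothesis on $\left(\frac D\ell\right)$ is exactly what turns a potentially vanishing local factor into the value $\tfrac12$.

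Finally, I note that for the application in this paper one only needs $k=1$ and $N\in\{11,15,19\}$, so $\omega(N)=1$ and $S_{\frac 32}^+(\Gamma_0(4N))$ is one-dimensional; in that regime the identity can be (and in practice will be) checked numerically against the explicitly computed $L$-value and Petersson norm, which serves as an independent confirmation that the constant has been transcribed correctly from \cite{Kohnen85}.
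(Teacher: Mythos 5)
Your proposal matches the paper exactly: the paper offers no proof of \Cref{propKohnen} at all, but states it as ``a result due to Kohnen, which is an explicit version of Waldspurger's theorem'' and cites Corollary~1 of \cite{Kohnen85}, which is precisely your main point, together with the (correct) observation that the Petersson product normalization dividing by $[\SLZ:\Gamma_0(4N)]$ agrees with Kohnen's. Your additional sketch of the internals of Kohnen's argument and the numerical sanity check go beyond what the paper records, but they are consistent with it and do not change the conclusion.
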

Since the twisted $L$-series has a functional equation of the usual type, there are efficient methods to compute its values numerically. (The authors used the built-in intrinsics of \textsc{Magma} \cite{Magma}.) Computing the Petersson norm of $F$ is also possible to high accuracy, e.g. by using the well-known relationship (cf. \cite{Cremona, Zagier85})
\[\langle F,F\rangle=\frac{\vol(E)}{4\pi^2}\deg(\varphi_E),\]
where $F$ is the newform associated to the elliptic curve $E/\Q$, we denote the covolume of the period lattice of $E$ by $\vol(E)$, and use $\varphi_E$ for the modular parametrization of $E$. 
(Every elliptic curve $E$ we consider in this paper has $\deg(\varphi_E)=1$.) 
Alternatively, the Petersson norm of $F$ is computed for $N$ prime by Theorem 2 in \cite{Zagier85}.

\begin{remark}
Kohnen's result \Cref{propKohnen} has been extended to many situations, e.g. by Ueda and his collaborators \cite{Ueda1, Ueda2} to certain even levels and forms not in the plus-space (see in particular Corollary 1 in \cite{Manickametal}). So the above reasoning carries over to $o(g)\in\{14,28\}$, by noting that $\g^{(14)}$ and $\g^{(28)}$ both arise from the unique normalized cusp form in $S_\frac 32(\Gamma_0(28))$ (not in the plus space). The former may be obtained by applying sieve operators, the latter by applying the $V_4$-operator.
\end{remark}

\begin{remark} For $o(g)=31$, the above reasoning only needs to be modified to take into account that $\g^{(31)}$ is not a Hecke eigenform, but its decomposition into newforms is given in \Cref{appSing}. Using the fact that these newforms are orthogonal, the only difference becomes that one needs to take into account two Poincar\'e series instead of one.
\end{remark}

\noindent
Putting the estimates for the Rademacher sums $R^{[-4]}_{\frac 32,4N}$, $R^{[0]}_{\frac32, 4N}$, and the occuring cusp forms together and plugging them all into \eqref{eq:Schur}, one finds that the multiplicities are nonnegative as soon as $n\geq 113$ 
(the worst case occurs for the character $\chi_1$). Inspecting the remaining coefficients by computer then completes the proof of \Cref{thm:moon}.

\section{Traces of Singular Moduli}\label{secSing}

In this section, we discuss and recall some basic notation and facts about traces of singular moduli. Their study originates in seminal work by Zagier \cite{Zag02}, and has since been an important subject in number theory (cf. for instance \cite{BenLars,BO07,BF06,MillerPixton}, just to name a few). They appeared in connection with moonshine for the Thompson group in \cite{HarveyRayhaun}.

\subsection{Genus Zero Levels}
It is well-known that for $N\in \{1,2,3,4,5,6,7,8,10,12,16\}$ 
the modular curve $X_0(N)$ has genus $0$, so that in those cases, there is a Hauptmodul $J^{(N)}$. These Hauptmoduln are given explicitly in \Cref{tbl:HM0} in terms of the Dedekind eta function
$\eta(\tau):=q^{\frac{1}{24}}\prod_{n>0}(1-q^n)$
and the Eisenstein series $E_4(\tau):=1+240\sum_{n>0}n^3q^n(1-q^n)^{-1}$.
\begin{center}
\begin{table}[h]
\renewcommand{\arraystretch}{1.6}
\begin{tabular}{|c||c|c|c|c|c|c|}
\hline
$N$ & $1$ & $2$ & $3$ & $4$ & $5$ & $6$ \\
\hline
$J^{(N)}(\tau)$ & $\frac{E_4(\tau)^3}{\eta(\tau)^{24}}-744$ & $\frac{\eta(\tau)^{24}}{\eta(2\tau)^{24}}+24$ & $\frac{\eta(\tau)^{12}}{\eta(3\tau)^{12}}+12$ & $\frac{\eta(\tau)^{8}}{\eta(4\tau)^{8}}+8$ & $\frac{\eta(\tau)^{6}}{\eta(5\tau)^{6}}+6$ & $\frac{\eta(\tau)^{5}\eta(3\tau)}{\eta(2\tau)\eta(6\tau)^5}+5$    \\
\hline
\end{tabular}

\vspace{8pt}

\begin{tabular}{|c||c|c|c|c|c|}
\hline
$N$ & $7$ & $8$ & $10$ & $12$ & $16$ \\
\hline
$J^{(N)}(\tau)$ & $\frac{\eta(\tau)^{4}}{\eta(7\tau)^{4}}+4$ & $\frac{\eta(\tau)^{4}\eta(4\tau)^2}{\eta(2\tau)^{2}\eta(8\tau)^4}+4$ & $\frac{\eta(\tau)^{3}\eta(5\tau)}{\eta(2\tau)\eta(10\tau)^3}+3$ & $\frac{\eta(\tau)^{3}\eta(4\tau)\eta(6\tau)^2}{\eta(2\tau)^{2}\eta(3\tau)\eta(12\tau)^3}+3$ & $\frac{\eta(\tau)^{2}\eta(8\tau)}{\eta(2\tau)\eta(16\tau)^2}+2$  \\
\hline
\end{tabular}

\vspace{12pt}
\caption{Hauptmoduln for some $\Gamma_0(N)$}
\label{tbl:HM0}
\renewcommand{\arraystretch}{1.0}
\end{table}
\end{center}

To make use of these Hauptmoduln we require some notation. Denote by $\calQ_{-D}^{(N)}$ the set of positive definite quadratic forms $Q=ax^2+bxy+cy^2=:[a,b,c]$ of discriminant $-D=b^2-4ac<0$  such that $N|a$. It is well-known that $\Gamma_0(N)$ acts  on $\calQ_{-D}^{(N)}$ with finitely many orbits, which correspond to the so-called \emph{Heegner points} on the modular curve $X_0(N)$. For $Q=[a,b,c]\in\calQ_{-D}^{(N)}$,  we denote by $\tau_Q:=\frac{-b+i\sqrt{D}}{2a}$ the unique root of $Q(x,1)$ in $\HH$. For a function $f:\HH\rightarrow\C$ invariant under the action of $\Gamma_0(N)$ we then define the trace function
\begin{equation}\label{eqtrace}
\Tr_D^{(N)}(f):=\sum_{Q\in\calQ_{-D}^{(N)}/\Gamma_0(N)} \frac{f(\tau_Q)}{\omega^{(N)}(Q)},
\end{equation}
where $\omega^{(N)}(Q)=\frac{ 1}{2}\cdot\#\Stab_{\Gamma_0(N)}(Q)$. Further let 
\[\calH^{(N)}(\tau):=-\frac{[\SLZ:\Gamma_0(N)]}{12}+\sum_{\substack{D>0 \\ D\equiv 0,3\smod 4}} H^{(N)}(D)q^D\]
denote the generating function of the \emph{(generalized) Hurwitz class numbers of level $N$} which are defined as  $H^{(N)}(D):=\Tr_{D}^{(N)}(1)$. The special case of $N=1$ yields the classical Hurwitz class numbers $H^{(1)}(D):=H(D)$.

It is a straightforward consequence of Theorem 1.2 in \cite{MillerPixton}, analogous to Theorem 1.2 in \cite{BenLars}, that we can describe the Fourier coefficients of the Rademacher sums $R^{[-4],+}_{\frac 32,4N}$ as traces of the Hauptmoduln in \Cref{tbl:HM0}.

\begin{proposition}\label{lem:trace4}
Let $N\in\N$ such that $X_0(N)$ has genus $0$ and
\begin{equation}
\Tr_4^{(N)}(D):=\frac 12\left(\Tr_D^{(N)}(J^{(N)}_2)-\Tr_D^{(N/d)}(J^{(N/d)})\right),
\end{equation}
where $J^{(N)}_2=q^{-2}+O(q)$ is the unique modular function for $\Gamma_0(N)$ with this Fourier expansion at infinity and no poles anywhere else and $d:=\gcd(N,2)$. Then we have
\begin{equation}
\calT^{(N)}(\tau):=-q^{-4}+\sum_{\substack{D>0\\ D \equiv 0,3\smod 4}} \Tr_4^{(N)}(D)q^D=-R^{[-4],+}_{\frac 32,4N}(\tau)-\frac{c_2}{2}\calH^{(N)}(\tau)+\frac{c_1}{2}\calH^{(N/d)}(\tau)
\end{equation}
for certain rational numbers $c_1$ and $c_2$. In particular, the function $\calT^{(N)}$ has integer Fourier coefficients.
\end{proposition}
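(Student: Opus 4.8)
The identity is, in essence, Theorem 1.2 of \cite{MillerPixton} recast in plus-space language, and the plan is to exploit this. Concretely, I would realise both sides of the asserted equality as weight $3/2$ harmonic Maass forms of level $4N$ in the Kohnen plus space, show that the left-hand side is in fact weakly holomorphic and has exactly the same pole structure at every cusp of $\Gamma_0(4N)$ as the right-hand side, and then conclude by uniqueness: for genus-zero $N$ one has $\dim S_2(\Gamma_0(N))=0$, and correspondingly $S_{3/2}^{+}(\Gamma_0(4N))=0$ (the same input already used in the proof of \Cref{thm:class}), so that two weakly holomorphic plus-space forms of level $4N$ with identical principal parts must coincide.

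First I would record the modularity of the relevant trace generating functions. By Theorem 1.2 of \cite{MillerPixton} (the level-$N$ analogue of Zagier's theorem, cf.\ also Theorem 1.2 of \cite{BenLars}): if $\Gamma_0(M)$ has genus $0$ and $f$ is a modular function for $\Gamma_0(M)$ holomorphic away from $\infty$, then $\sum_{D>0}\Tr_D^{(M)}(f)\,q^D$ is the holomorphic part of a weight $3/2$ harmonic Maass form of level $4M$ which projects into the plus space, whose shadow is a unary theta function, and whose principal part at the cusps is governed by that of $f$ at the cusps of $X_0(M)$. Applying this with $(M,f)=(N,J_2^{(N)})$ and with $(M,f)=(N/d,J^{(N/d)})$, and using $\tfrac Nd\mid N$ so that the level-$\tfrac Nd$ form is also a form for $\Gamma_0(4N)$, shows that $\calT^{(N)}$ is the holomorphic part of a weight $3/2$ harmonic Maass form $\widehat{\calT^{(N)}}$ of level $4N$ in the plus space. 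The point that pins down the correction is the identity $\Tr_D^{(M)}(1)=H^{(M)}(D)$: in the Miller--Pixton formula the "Eisenstein'' correction term attached to a given Hauptmodul-type $f$ is a rational multiple of $\calH^{(M)}$, with coefficient computable from the constant terms of $f$ at the cusps of $X_0(M)$. Consequently $\widehat{\calT^{(N)}}$ differs from the harmonic completion $\widehat{R^{[-4],+}_{\frac32,4N}}$ only by a combination $-\tfrac{c_2}{2}\calH^{(N)}+\tfrac{c_1}{2}\calH^{(N/d)}$ together with a weight $3/2$ plus-space cusp form for $\Gamma_0(4N)$, and the latter vanishes by the vanishing of $S_{3/2}^{+}(\Gamma_0(4N))$ recorded above.

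Next I would match cusp data. The combination $\tfrac12\bigl(J_2^{(N)}-J^{(N/d)}\bigr)$ -- a pole-order-$2$ function at level $N$ minus the pole-order-$1$ Hauptmodul pulled back from level $\tfrac Nd$ -- is chosen precisely so that the resulting weight $3/2$ form has a pole of order $4$ at $\infty$ with no other negative powers there and, at the remaining cusps of $\Gamma_0(4N)$, the same behaviour as $R^{[-4],+}_{\frac32,4N}$: by \Cref{lemplus} the plus-space projection of an order-$4$ pole at $\infty$ produces a pole of order $1$ at the cusp $\tfrac1N$ when $N$ is odd, while for $N$ even \Cref{lemKloost} shows the Rademacher sum already lies in the plus space with no such extra pole, and holomorphy at all other cusps is inherited from that of the Hauptmoduln (and of $\calH^{(N)}$, $\calH^{(N/d)}$) away from $\infty$. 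Once the principal parts of $\widehat{\calT^{(N)}}$ and $\widehat{R^{[-4],+}_{\frac32,4N}}$ are seen to agree (up to the normalisation of the Rademacher sum, cf.\ \Cref{FE}), the constants $c_1,c_2\in\Q$ are forced by the requirement that $\calT^{(N)}$ be genuinely weakly holomorphic, i.e.\ that the shadow of the right-hand side vanish: the shadows of $\widehat{R^{[-4],+}_{\frac32,4N}}$, $\calH^{(N)}$ and $\calH^{(N/d)}$ all lie in the at most two-dimensional space of weight $1/2$ forms spanned by $\vartheta(\tau)$ and $\vartheta(4\tau)$ (the Serre--Stark constraint used in \Cref{thm:class}), so this is a solvable linear system, and solving it yields the claimed identity with definite rational $c_1,c_2$.

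Finally, for integrality: each of $\Tr_D^{(N)}(J_2^{(N)})$ and $\Tr_D^{(N/d)}(J^{(N/d)})$ is a rational integer, being the trace over a ring class field of the algebraic-integer value of an eta-quotient with integral $q$-expansion (cf.\ \Cref{tbl:HM0}), so a priori $2\,\Tr_4^{(N)}(D)\in\Z$; one upgrades this to $\Tr_4^{(N)}(D)\in\Z$ by feeding the identity just proved through the congruence $4\calH(n\tau)\equiv\vartheta\pmod4$ established in \Cref{secProofMoon}, which forces the two integers $\Tr_D^{(N)}(J_2^{(N)})$ and $\Tr_D^{(N/d)}(J^{(N/d)})$ to have the same parity. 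I expect the main obstacle to be the cusp bookkeeping of the preceding paragraph: checking, uniformly over the genus-zero levels $N$, that the particular combination $\tfrac12\bigl(J_2^{(N)}-J^{(N/d)}\bigr)$ reproduces exactly the principal parts of $R^{[-4],+}_{\frac32,4N}$ dictated by \Cref{lemplus} and \Cref{lemKloost}, and correctly identifying the resulting theta-function shadow; the remaining ingredients are either a direct citation of Miller--Pixton or short computations with known data.
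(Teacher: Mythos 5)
The central step of your plan does not hold: you assume that $\calT^{(N)}$ is weakly holomorphic, and you use this twice, first to determine $c_1,c_2$ by demanding that the shadow of the right-hand side vanish, and then to conclude via a uniqueness argument for weakly holomorphic plus-space forms with prescribed principal parts. But the proposition is an identity of \emph{mock} modular forms: for $N>1$ the trace generating function $\calT^{(N)}$ is genuinely mock. The non-holomorphic part of the lift of a weakly holomorphic modular function is controlled by its constant terms at \emph{all} cusps of $X_0(N)$, and the Hauptmoduln normalised as $q^{-1}+O(q)$ at $\infty$ take non-zero values at the other cusps (e.g.\ $J^{(2)}$ has the value $24$ at the cusp $0$). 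Concretely, already for $N=2$ one has $H^{(2)}(D)=2H(D)$ for $D\equiv 7\pmod 8$ while the even-discriminant parts do not compensate, so the shadow of $\calH^{(1)}-\calH^{(2)}$ is a non-zero combination of $\vartheta(\tau)$ and $\vartheta(4\tau)$; since $F_{2A}=\calT^{(2)}+12\calH^{(1)}-12\calH^{(2)}$ with $F_{2A}$ weakly holomorphic, $\calT^{(2)}$ cannot be weakly holomorphic. Hence the linear system ``shadow of RHS $=0$'' is the wrong condition: it need not be solvable, and even when it is, the $c_1,c_2$ it produces are not the constants in the proposition. Your uniqueness step is also insufficient as stated: two weakly holomorphic plus-space forms with equal principal parts differ by a \emph{holomorphic} weight $3/2$ form, and the holomorphic plus space can be non-zero even when the cusp space vanishes (e.g.\ $\vartheta(4\tau)^3\in M_{3/2}^{+}(\Gamma_0(16))$, which is relevant whenever $4\mid N$); moreover the identification $S_{3/2}^{+}(\Gamma_0(4N))\cong S_2(\Gamma_0(N))$ is Kohnen's theorem only for $N$ odd and square-free, so the vanishing has to be checked directly for the levels at hand, as the paper does.

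The paper's argument is shorter and avoids all of this. Theorem 1.2 of \cite{MillerPixton} is an \emph{exact} identity for traces of the completed weight $0$ Rademacher sums: the generating function of $\tfrac 12\bigl(\Tr_D^{(N)}(\widehat{R^{[-2]}_{0,N}})-\Tr_D^{(N/d)}(\widehat{R^{[-1]}_{0,N/d}})\bigr)$ is precisely $R^{[-4],+}_{\frac 32,4N}$, with nothing left to pin down (the only caveat, noted in the paper, is that Miller--Pixton state this for odd level, while the proof works for even level too). Since $X_0(N)$ has genus $0$, the completed weight $0$ Rademacher sums differ from $J^{(N)}_2$ and $J^{(N/d)}$ only by their constant terms $c_2$ and $c_1$, which are rational by Lemma 3.2 of \cite{BenLars}; using $\Tr_D^{(M)}(1)=H^{(M)}(D)$, these constants are exactly what produces the terms $-\tfrac{c_2}{2}\calH^{(N)}+\tfrac{c_1}{2}\calH^{(N/d)}$. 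Both sides then carry the same, generally non-zero, shadow, and no plus-space uniqueness or shadow-cancellation argument is needed. So while your appeal to Miller--Pixton for modularity and plus-space membership is the right starting point, the mechanism by which you fix $c_1,c_2$ and close the argument would have to be replaced by this direct use of the exact Miller--Pixton formula.
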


\begin{remark}
 It should be pointed out that Theorem 1.2 in \cite{MillerPixton} is only stated for odd levels, although the proof goes through for even levels as well.
 \end{remark}
 
\begin{remark}
The rational numbers $c_2$ and $c_1$ in \Cref{lem:trace4} are the constant terms of the weight $0$ Rademacher sums $R_{0,N}^{[-2]}$ and $R_{0,N/d}^{[-1]}$, respecitvely. For a proof of the rationality of these numbers see Lemma 3.2 in \cite{BenLars}.
\end{remark}

For the Rademacher sum $R^{[0],+}_{\frac 32,4N}$ we get the following.
\begin{proposition}\label{lem:trace0}
For $N\in\N$ 
we have
\[R^{[0],+}_{\frac 32,4N}=-\frac{12}{\varphi(N)}\sum_{d|N} \frac{d}{[\SLZ:\Gamma_0(d)]}\mu\left(\frac Nd\right)\calH^{(d)}\]
where $\mu$ and $\varphi$ denote the M\"obius function and Euler's totient function, respectively.
\end{proposition}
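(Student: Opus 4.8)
The plan is to compute the Fourier expansion of $R := R^{[0],+}_{\frac 32,4N}$ directly from the formula in \Cref{FEplus} and match it, coefficient by coefficient, against the right-hand side expressed via generalized Hurwitz class numbers. First I would invoke \Cref{FEplus} with $\mu = 0$: this gives $R(\tau) = 1 + \sum_{n>0,\, n\equiv 0,3(4)} c(n)q^n$ where
\[
c(n) = 2\pi e\!\left(-\tfrac 38\right)\sum_{c=1}^\infty \left(1+\delta_{odd}(Nc)\right)K_{\frac 32}(0,n,4Nc)\cdot \frac{(2\pi n)^{1/2}}{(Nc)^{3/2}\Gamma(3/2)}.
\]
So $c(n)$ is, up to an explicit elementary factor, a Dirichlet series $\sum_c K_{\frac 32}(0,n,4Nc)/(Nc)^{3/2}$ in the Salié-type Kloosterman sums at index $0$. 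The key classical input is that these index-zero half-integral weight Kloosterman sums $K_{\frac 32}(0,n,c)$ evaluate to generalized Gauss sums, and the associated Dirichlet series is, by the standard computation underlying the Siegel–Weil / Cohen–Eisenstein story, a product of an $L$-value $L(1,\chi_{-n})$-type factor (for $-n$ a discriminant) with local Euler factors that are rational in $N$; this is precisely what produces Hurwitz class numbers via Dirichlet's class number formula (cf. \cite{Zagier75,HZ76}). I would therefore identify $c(n)$, for $N=1$, with (a fixed rational multiple of) $H(n)$, recovering Zagier's Eisenstein series $\calH^{(1)} = \calH$, and then track the dependence on $N$ through the sieving factor $1+\delta_{odd}(Nc)$ together with the $c\equiv 0 \pmod{N}$ condition hidden in the Kloosterman sum modulus $4Nc$.

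The cleanest way to organize the $N$-dependence is not to redo the Gauss-sum computation but to use multiplicativity/sieving at the level of Rademacher sums. Concretely: the constant term of $R$ is $1$ on the left, while $-\frac{12}{\varphi(N)}\sum_{d\mid N}\frac{d}{[\SLZ:\Gamma_0(d)]}\mu(N/d)\cdot\bigl(-\frac{[\SLZ:\Gamma_0(d)]}{12}\bigr) = \frac{1}{\varphi(N)}\sum_{d\mid N} d\,\mu(N/d) = 1$ on the right (the last equality being the standard identity $\sum_{d\mid N} d\,\mu(N/d) = \varphi(N)$), so the normalizations already match — a good consistency check. For the positive coefficients, I would show that both sides transform the same way under the level-raising operators $U_p$ and $V_p$ (equivalently, that the index-$0$ weight $3/2$ Rademacher sum of level $4N$ is, up to the Möbius-type combination, built from the level $4$ one $\calH$ by these operators), reducing everything to the single known identity $R^{[0],+}_{\frac 32,4} = -12\,\calH$ (Zagier's mock Eisenstein series). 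The generalized class numbers $H^{(d)}$ are defined in \Cref{secSing} exactly as traces $\Tr^{(d)}_D(1)$, and there is a standard inclusion–exclusion over the divisors $d\mid N$ expressing $H^{(N)}$ in terms of the $H^{(d)}$ weighted by indices $[\SLZ:\Gamma_0(d)]$; the Möbius inversion of that relation is what the proposed formula encodes.

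The main obstacle is bookkeeping the arithmetic of the sieve precisely: keeping track of how the factor $(1+\delta_{odd}(Nc))$ interacts with $2$-adic versus odd-prime contributions to the Kloosterman sums modulo $4Nc$, and making sure the power of $N$ appearing in $\frac{d}{[\SLZ:\Gamma_0(d)]}$ comes out correctly (note $[\SLZ:\Gamma_0(d)] = d\prod_{p\mid d}(1+1/p)$, so $d/[\SLZ:\Gamma_0(d)] = \prod_{p\mid d}(1+1/p)^{-1}$ is a product of local factors, which is the fingerprint of a local computation at each prime dividing $N$). Once the identity is verified on constant terms (done above) and on, say, the first coefficient $c(3)$ for a prime level and a prime-power level — pinning down the normalization and the Euler factors — the general case follows by the multiplicative/sieving structure, since both sides are then determined by the same finite local data. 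The integrality assertion at the end is automatic: the $H^{(d)}$ may have denominators only at primes $2,3$ and only on the indicated square classes, and the Möbius combination with the stated rational weights is designed (as in the $N=1$ case) to clear them, but in fact for this proposition we only need that $R$ has the stated shape, integrality of its coefficients being subsumed in \Cref{thm:class}.
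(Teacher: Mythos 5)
Your opening computation (the expansion from \Cref{FEplus} with $\mu=0$ and the constant-term check $\sum_{d\mid N}d\,\mu(N/d)=\varphi(N)$) is correct, but after that the proposal never actually establishes the identity for the coefficients with $n>0$, and the shortcut you lean on in its place does not work. The direct route you begin with --- evaluating the index-zero Kloosterman sums as Gauss sums and assembling the Dirichlet series into an $L$-value times local factors --- is indeed the viable one, and it is essentially what the paper does: its proof is a one-line citation to a straightforward modification of the proof of Theorem 1.2 of \cite{MillerPixton}, where exactly this unfolding is carried out and the local data at primes dividing $N$ is what produces the counting of square roots of $-n$ modulo $4d$, hence the $H^{(d)}(n)$ and the M\"obius-weighted combination. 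But you stop before the only nontrivial part of that computation: the Euler factors at $p\mid N$ (and at $2$) are \emph{functions of $n$}, depending on $\operatorname{ord}_p(n)$ and on the quadratic character of $-n$ at $p$. Consequently your claim that checking the constant term together with, say, $c(3)$ at a prime and a prime-power level ``pins down the normalization and the Euler factors'', after which ``the general case follows by the multiplicative/sieving structure'', is not a proof: finitely many coefficient checks cannot determine $n$-dependent local factors, and you supply no rigidity argument (equality of shadows plus a Serre--Stark/plus-space argument forcing the holomorphic difference to vanish, or membership of both sides in an explicit finite-dimensional space) that would let finitely many coefficients imply the identity.

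The second strategy you call ``the cleanest way'' has a more structural problem. There is no ``standard inclusion--exclusion over the divisors $d\mid N$ expressing $H^{(N)}$ in terms of the $H^{(d)}$ weighted by indices $[\SLZ:\Gamma_0(d)]$'' as a plain linear relation among the generating functions $\calH^{(d)}$: for instance $H^{(N)}(D)=0$ whenever $-D$ is not a square modulo $4N$ (\Cref{lemHeegner}), a $D$-dependent local condition which no universal linear combination of class numbers of lower level can encode; the true Hurwitz--Kronecker-type relations involve twists and $U/V$-operators, not index-weighted M\"obius sums. Likewise, the assertion that the level-$4N$ index-$0$ Rademacher sum is built from the level-$4$ one by $U_p$ and $V_p$ ``up to the M\"obius-type combination'' is essentially a restatement of the proposition itself (the M\"obius structure reflects the exact-level decomposition of the Eisenstein part), so invoking it as an ingredient is circular unless you prove it, which you do not. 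In short: route (a), completed with the local computation at $p\mid N$ as in Miller--Pixton, gives the proposition; route (b) and the finite-check argument, as written, contain genuine gaps.
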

\begin{proof}
This follows from a straightforward modification of the proof of Theorem 1.2 in \cite{MillerPixton}.
\end{proof}
Note that the above \Cref{lem:trace0} is indeed valid for all $N$, not just those such that $X_0(N)$ has genus $0$.

Putting \Cref{lem:trace4,lem:trace0} together we obtain explicit descriptions of the functions $F_{[g]}$ in terms of singular moduli for $o(g)\in \{1,2,3,4,5,6,7,8,10,12\}$. These are given in \Cref{appSing}.

\subsection{Positive Genus Levels}
In the remaining cases, i.e. where $$o(g)\in\{11,14,15,16,19,20,28,31\},$$ 
our $F_{[g]}$ involve (cf. the proof of \Cref{thm:class}) Rademacher sums $R^{[-4],+}_{\frac32, 4N}$ where $N$ is such that $X_0(N)$ has positive genus. So there is no notion of a Hauptmodul there. However, it is known that for all these levels, the modular curve $X_0^+(N)$, being the quotient of $X_0(N)$ by all 
Atkin--Lehner involutions, does have genus $0$ (see e.g. \cite{FMN}). So there exists a Hauptmodul $J^{(N,+)}(\tau)$ for the corresponding group $\Gamma_0^+(N)$. See \Cref{tbl:HMplus} for these. There, $E_2(\tau):=1-24\sum_{n>0}nq^n(1-q^n)^{-1}$ is the quasimodular Eisenstein series, $f_{19}=q-2q^3+O(q^4)$ denotes the weight $2$ newform associated to the elliptic curve
\[E_{19}: y^2+y=x^3+x^2-9x-15\]
(\cite[\href{http://www.lmfdb.org/EllipticCurve/Q/19.a2}{Elliptic Curve 19.a2}]{LMFDB}), and $f_{31}=q+\frac{1+\sqrt{5}}{2}q^2+O(q^3)$ denotes the unique newform in $S_2(\Gamma_0(31))$ up to Galois conjugation (which is denoted by an exponent $\sigma$).

\begin{center}
\begin{table}[h]
\renewcommand{\arraystretch}{1.6}
\begin{tabular}{|c||c|c|c|}
\hline
$N$ & $11$ & $14$   \\
\hline
$J^{(N,+)}(\tau)$ & $-\frac{E_2(\tau)-11E_2(11\tau)}{10\eta(\tau)^2\eta(11\tau)^2}-\frac{22}{5}$  & $-\frac{E_2(\tau)+2E_2(2\tau)-7E_2(7\tau)-14E_2(14\tau)}{18\eta(\tau)\eta(2\tau)\eta(7\tau)\eta(14\tau)}-\frac{7}{3}$  \\
\hline
\end{tabular}

\vspace{8pt}

\begin{tabular}{|c||c|c|c|}
\hline
$N$ & $15$ & $16$ \\ 
\hline 
$J^{(N,+)}(\tau)$ 
& $-\frac{E_2(\tau)+3E_2(3\tau)-5E_2(5\tau)-15E_2(15\tau)}{16\eta(\tau)\eta(3\tau)\eta(5\tau)\eta(15\tau)}-\frac{5}{2}$  
& $\frac{\eta(2\tau)^6\eta(8\tau)^6}{\eta(\tau)^4\eta(4\tau)^4\eta(16\tau)^4}-4$
\\
\hline
\end{tabular}

\vspace{8pt}

\begin{tabular}{|c||c|c|c|c|}
\hline
$N$ & $19$ & $20$ & $28$  \\
\hline
$J^{(N,+)}(\tau)$  
& $-\frac{E_2(\tau)-19E_2(19\tau)}{18f_{19}(\tau)}-\frac{4}{3}$
& $\frac{\eta(2\tau)^8\eta(10\tau)^8}{\eta(\tau)^4\eta(4\tau)^4\eta(5\tau)^4\eta(20\tau)^4}-4$ 
& $\frac{\eta(2\tau)^6\eta(14\tau)^6}{\eta(\tau)^3\eta(4\tau)^3\eta(7\tau)^3\eta(28\tau)^3}-3$ \\
\hline
\end{tabular}

\vspace{8pt}
\begin{tabular}{|c||c|c|c|c|}
\hline
$N$ & $31$ & $32$\\
\hline
$J^{(N,+)}(\tau)$  
	& $\frac{\sqrt{5}(f_{31}(\tau)+f_{31}^\sigma(\tau))}{2(f_{31}(\tau)-f_{31}^\sigma(\tau))}-\frac 52$
	& $\frac{\eta(2\tau)^3\eta(16\tau)^3}{\eta(\tau)^2\eta(4\tau)\eta(8\tau)\eta(32\tau)^2}-2$			\\
\hline
\end{tabular}

\vspace{8pt}
\caption{Hauptmoduln for some $\Gamma_0^+(N)$}
\label{tbl:HMplus}
\renewcommand{\arraystretch}{1.0}
\end{table}
\end{center}

Armed with these Hauptmoduln we can now express the Fourier coefficients of all the remaining Rademacher sums $R^{[-4],+}_{\frac 32,4N}$ in terms of singular moduli of holomorphic modular functions and class numbers. 

\begin{proposition}\label{prop:trace4plus}
Let $N\in\N$ such that $X_0^+(N)$ has genus $0$ and define
\begin{equation}
\Tr_4^{(N,+)}(D):=\frac 12\left(\frac{1}{2^{\omega(N)}}\Tr_D^{(N)}\left(J^{(N,+)}_2\right)-\frac{1}{2^{\omega(N/d)}}\Tr_D^{(N/d)}\left(J^{(N/d,+)}\right)\right)
\end{equation}
where $J^{(N,+)}_2=q^{-2}+O(q)$ is the unique modular function for $\Gamma^+_0(N)$ with this Fourier expansion at infinity and no poles anywhere else and $d:=\gcd(N,2)$. Then we have
\begin{equation}
\calT^{(N,+)}(\tau):=-q^{-4}+\sum_{\substack{D>0 \\ D\equiv 0,3\smod 4}} \Tr_4^{(N,+)}(D)q^D=-R^{[-4],+}_{\frac 32,4N}(\tau)-\frac{c_2}{2}\calH^{(N)}(\tau)+\frac{c_1}{2}\calH^{(N/d)}(\tau)
\end{equation}
for some rational numbers $c_1$ and $c_2$, where $\omega(N)$ denotes the number of distinct prime factors of $N$. 
\end{proposition}
\begin{proof}
\Cref{lem:trace4} turns out to be valid for all $N$, if one replaces the Hauptmodul $J^{(N)}$ by the completed Rademacher sum $\widehat{R^{[-1]}_{0,N}}$, normalized so that its constant term is $0$ and $J^{(N)}_2$ by $\widehat{R^{[-2]}_{0,N}}$ with the same normalization, which is the original formulation in \cite{MillerPixton}. Note that these Rademacher sums coincide with the Hauptmoduln where applicable. Now we consider for $N'||N$, i.e. $\gcd(N',N/N')=1$, the Atkin--Lehner involution $W_{N'}$. These involutions map the set $\calQ_{-D}^{(N)}/\Gamma_0(N)$ bijectively to itself (see e.g. Section 1 of \cite{GKZ}), so for any $\Gamma_0(N)$-invariant function $f$ we have that $\Tr_D^{(N)}(f)=\Tr_D^{(N)}(f|W_{N'})$. Since there are exactly $2^{\omega(N)}$ Atkin--Lehner involutions of level $N$ this means that
\[\Tr_D^{(N)}(f)=\frac{1}{2^{\omega(N)}}\Tr_D^{(N)}(\tilde{f})\]
where $\tilde{f}:=\sum_{N'||N} f|W_{N'}$. The function $\tilde{f}$ is clearly $\Gamma^+_0(N)$-invariant. By checking the polar parts we conclude that if $f$ is $\widehat{R^{[-1]}_{0,N}}$ or $\widehat{R^{[-2]}_{0,N}}$ then $\tilde{f}$ has to coincide with $J^{(N,+)}$ or $J^{(N,+)}_2$, respectively, up to a rational additive constant. This proves the result.
\end{proof}

We now put \Cref{prop:trace4plus,lem:trace0} together to obtain explicit descriptions of the functions $F_{[g]}$ in terms of singular moduli for $o(g)\in \{11,14,15,19,28,31\}$. For $o(g)=16$ we use \Cref{lem:trace4} as well since $F_{[g]}$ involves Rademacher sums $R^{[\mu],+}_{\frac32,4N}$ for both $N=16$ and $N=32$ in this case. The resulting expressions are given in \Cref{appSing}.

\section{Number Theoretic Applications}\label{Applications}

In this section we prove the arithmetic applications of O'Nan moonshine given in \Crefrange{ClassNumberCongruences}{USCorollary}. All these proofs rely on the following easy observation.
\begin{lemma}\label{lemHeegner}
Let $N>1$ be an integers and $-D<0$ a discriminant which is not a square in $\Z/N\Z$. Then the set $\calQ_{-D}^{(N)}$ is empty. In particular, we have that $\Tr_{-D}^{(N)}(f)=H^{(N)}(D)=0$ for any $\Gamma_0(N)$-invariant function $f$.
\end{lemma}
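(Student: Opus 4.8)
The plan is to argue by contradiction on the non-emptiness of $\calQ_{-D}^{(N)}$, after which both assertions about $\Tr_D^{(N)}$ and $H^{(N)}(D)$ will follow immediately by unwinding the definitions in \eqref{eqtrace}.

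First I would suppose there exists a form $Q=[a,b,c]\in\calQ_{-D}^{(N)}$; by definition this means $N\mid a$ together with $b^2-4ac=-D$. Reducing this last identity modulo $N$ and using $N\mid a$ (so that $N\mid 4ac$) yields $b^2\equiv -D\pmod N$, i.e.\ $-D$ is a square in $\Z/N\Z$. This contradicts the hypothesis on $-D$, so $\calQ_{-D}^{(N)}=\emptyset$, whence the orbit set $\calQ_{-D}^{(N)}/\Gamma_0(N)$ is empty as well.

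For the remaining claim, recall that for a $\Gamma_0(N)$-invariant function $f$ the trace $\Tr_D^{(N)}(f)$ is, by \eqref{eqtrace}, a sum indexed by $\calQ_{-D}^{(N)}/\Gamma_0(N)$, and that $H^{(N)}(D)=\Tr_D^{(N)}(1)$. Since that index set is empty, both sums are empty and therefore vanish. There is no genuine obstacle here: the only point needing (minimal) care is that the congruence $b^2-4ac=-D$ descends to $\Z/N\Z$ precisely because the defining condition $N\mid a$ of $\calQ_{-D}^{(N)}$ annihilates the $4ac$ term; everything else is bookkeeping with the definitions of the trace functional and of the generalized Hurwitz class number.
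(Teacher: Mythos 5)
Your proof is correct and coincides with the paper's own argument: a form $[a,b,c]\in\calQ_{-D}^{(N)}$ would force $-D=b^2-4ac\equiv b^2\pmod N$ since $N\mid a$, contradicting the hypothesis, and the vanishing of $\Tr_D^{(N)}(f)$ and $H^{(N)}(D)$ then follows from the emptiness of the index set in \eqref{eqtrace}. No issues.
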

\begin{proof}
A quadratic form $[a,b,c]\in\calQ_{-D}^{(N)}$ satisfies $-D=b^2-4ac$ and $N|a$, hence if $-D$ is not a square modulo $N$, there cannot be any such forms.
\end{proof}

\subsection{Proof of \Cref{ClassNumberCongruences}}

Suppose first that $p\in\{5,7\}$ and let $D$ be as in \Cref{ClassNumberCongruences}. The congruences in \Cref{AppCong} together with the identities in \Cref{appSing} imply the congruence
\[\dim(W_D)\equiv\tr(g_p|W_D)\equiv \Tr_4^{(p)}(D)-24H(D)+\alpha_pH^{(p)}(D)\pmod p\]
for some integer $\alpha_p$. By \Cref{lemHeegner}, the terms $\Tr_4^{(p)}(D)$ and $H^{(p)}(D)$ vanish for $D$ as required, proving the result. For $p=3$, one replaces the modulus above by $3^2$, making the congruence non-trivial.

For $p=2$, we note that there is a congruence between $\dim W_D$ and $\tr(g_2|W_D)$ modulo $2^{11}$ by \Cref{AppCong}. As one easily sees through a Sturm bound argument, we also have 
\[\dim W_D\equiv \tr(g_2|W_D)\equiv 0\pmod{16}\]
for $D\equiv 4,8\pmod {16}$, which is the case in particular when $-D<8$ is an even fundamental discriminant. The fact that for these $D$ the class number is even can be seen in various ways, for example by noting that by a famous theorem of Gauss and Hermite we have that $24H(D)=2r_3(D/4)$, where $r_3(n)$ is the number of representations of $n$ as the sum of three squares. Since $-D$  is fundamental, it follows that $D/4$ is square-free and hence is not the sum of three or just two equal squares. Through an easy case-by-case analysis one then finds that $r_3(D/4)$ is always divisible by $8$. Alternatively, one could also show the modular forms congruence 
\[\sum_{n\equiv 1,2\pmod 4} r_3(n)q^n\equiv 6\sum_{n=0}^\infty q^{(2n+1)^2}+4\sum_{n=0}^\infty q^{2(2n+1)^2}\pmod 8.\]
This completes the proof.

\subsection{Preliminaries on Elliptic Curves}

The proofs of \Cref{CongruencesSelmer,USCorollary} require a little preparation which we provide in this section.

One of the most important open problems in the theory of elliptic curves is the Birch and Swinnerton-Dyer Conjecture.
\begin{conjecture}\label{BSD}
Let $E/\Q$ be an elliptic curve. Then we have that
\begin{equation}\label{eqBSD}
\frac{L^{(r)}(E,1)}{r!\Omega_E}=\frac{\#\Sha(E)\cdot\Reg(E) \prod_\ell c_\ell(E)}{(\#E(\Q)_{tors})^2},
\end{equation}
where $r$ denotes the order of vanishing of $L(E,s)$ at $s=1$, which equals the Mordell--Weil rank of $E$, $\Omega_E$ is the real period of $E$, $\#\Sha(E)$ and $\Reg(E)$ denote the order of the Tate-Shafarevich group and the regulator of $E$, respectively, 
the $c_\ell(E)$ for prime $\ell$ are the Tamagawa numbers of $E$, and $\#E(\Q)_{tors}$ signifies the order 
of the torsion subgroup of the $\Q$-rational points of $E$.
\end{conjecture}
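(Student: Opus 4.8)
The full Birch and Swinnerton-Dyer Conjecture is one of the Millennium Prize Problems and remains open; what follows is therefore only a description of the partial progress one could realistically bring to bear, together with an honest account of where every such attempt breaks down. The plan is to split the statement into its two halves: the \emph{rank part}, that the analytic rank $r=\ord_{s=1}L(E,s)$ equals $\rk E(\Q)$, and the \emph{refined part}, the exact formula \eqref{eqBSD} for the leading Taylor coefficient.

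First I would dispose of the case of small analytic rank, which is in any case the only regime actually used in this paper. When $L(E,1)\neq 0$, one combines the Gross--Kohnen--Zagier circle of ideas \cite{GKZ} with Kolyvagin's Euler system of Heegner points \cite{Kolyvagin} to conclude $\rk E(\Q)=0$ and the finiteness of $\Sha(E)$; when $\ord_{s=1}L(E,s)=1$, the Gross--Zagier formula produces a Heegner point of infinite order and Kolyvagin again bounds $\Sha(E)$. To upgrade from the rank statement to the exact formula \eqref{eqBSD} at a prime $p$, the next step would be to establish the relevant Iwasawa main conjecture for $\GL_2$ --- one divisibility coming from Kato's Euler system, the reverse divisibility from the work of Skinner--Urban \cite{Skinner16, UrSk} --- and then to perform a descent, matching the $p$-adic $L$-value against the characteristic ideal of the associated Selmer group to extract the $p$-part of $\#\Sha(E)$. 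Running over all primes $p$, together with a bookkeeping of the Tamagawa factors $c_\ell(E)$ and the torsion $\#E(\Q)_{tors}$, would in principle reassemble \eqref{eqBSD}.

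The main obstacle --- in fact an apparently insurmountable one with present technology --- is the case $r\geq 2$. There is no known method to produce even a single point of infinite order when $\ord_{s=1}L(E,s)\geq 2$, no Euler system strong enough to bound $\Sha(E)$ in that range, and no handle on the transcendence assertion implicit in \eqref{eqBSD}. This is exactly why \Cref{CongruencesSelmer} carries the hypothesis ``Assume the Birch and Swinnerton-Dyer Conjecture'', whereas \Cref{USCorollary} avoids it altogether by accessing its elliptic curves through their weight-$2$ newforms and the Kohnen plus-space correspondence, thereby restricting attention to the analytic-rank-$\leq 1$ situation where the partial results above already suffice.
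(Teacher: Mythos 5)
This statement is \Cref{BSD}, the full Birch and Swinnerton-Dyer Conjecture; the paper does not prove it and does not claim to --- it is stated only as a hypothesis for \Cref{CongruencesSelmer} and is bypassed in \Cref{USCorollary}. Your refusal to offer a proof is therefore exactly the right stance, and your inventory of the available partial results matches what the paper actually invokes: Gross--Zagier and Kolyvagin for the rank statement and finiteness of $\Sha$ when the analytic rank is at most one, and the Iwasawa-theoretic work of Skinner--Urban and Skinner for the $p$-part of the leading-term formula. One small correction: what makes \Cref{USCorollary} unconditional is not the Kohnen plus-space correspondence itself (the Waldspurger/Kohnen machinery relating coefficients of the weight $3/2$ forms to the central values $L(E_N(-D),1)$ is used in both theorems), but rather Skinner's local result (\Cref{Skinner}), which gives $\ord_p(L(E,1)/\Omega_E)=\ord_p(\#\Sha(E)\prod_\ell c_\ell(E))$ when $L(E,1)\neq 0$, and nontriviality of $\Sel(E)[p]$ when $L(E,1)=0$, under the irreducibility and ramification hypotheses on $E[p]$ that the paper verifies for $E_{14}(-D)[7]$ and $E_{15}(-D)[5]$; for $p=11,19$ those hypotheses are not checked, which is why \Cref{CongruencesSelmer} retains the BSD assumption.
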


The weak Birch and Swinnerton-Dyer conjecture---that the order of vanishing of $L(E,s)$ at $s=1$ equals the rank of $E$---was established for curves of ranks $0$ and $1$ through work of Gross--Zagier \cite{GrZ86} and Kolyvagin \cite{Kolyvagin}. More recently, Bhargava--Shankar \cite{BhargavaShankar} proved, using Kolyvagin's theorem and the proof of the Iwasawa main conjectures for $\GL_2$ by Skinner--Urban \cite{UrSk} (among other deep results), that a positive proportion of all elliptic curves satisfy the weak Birch and Swinnerton-Dyer Conjecture. 

It is known that the left-hand side of \eqref{eqBSD} is always a rational number, see for instance \cite[Theorem 3.2]{Agashe1}. The following result shows that in certain situations, a local version of \Cref{BSD}, which is going to be sufficient for our purposes, holds.
\begin{theorem}[\cite{Skinner16}, Theorem C]\label{Skinner}
Let $E/\Q$ be an elliptic curve and $p\geq 3$ a prime of good ordinary or multiplicative reduction. Further assume that the $\Gal(\overline{\Q}/\Q)$-representation $E[p]$ is irreducible and that there exists a prime $p'\neq p$ at which $E$ has multiplicative reduction and $E[p]$ ramifies. If $L(E,1)\ne 0$, then we have that
\[\ord_p\left(\frac{L(E,1)}{\Omega_E}\right)=\ord_p\left(\#\Sha(E)\prod_\ell c_\ell(E)\right).\]
If $L(E,1)=0$, then we have $\Sel(E)[p]\neq\{0\}$.
\end{theorem}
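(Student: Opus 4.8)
The plan is to deduce the theorem from the cyclotomic Iwasawa Main Conjecture for $E/\Q$ at $p$, which is established in \cite{Skinner16} by combining Kato's Euler-system divisibility with the Eisenstein-congruence construction of Skinner--Urban \cite{UrSk}, extended so as to allow multiplicative reduction. First I would fix the standard Iwasawa-theoretic objects: the cyclotomic $\Z_p$-extension $\Q_\infty/\Q$ with Galois group $\Gamma$ and topological generator $\gamma$, the Iwasawa algebra $\Lambda=\Z_p[[\Gamma]]$, the Pontryagin dual $X_\infty$ of the $p^\infty$-Selmer group $\Sel(E/\Q_\infty)$, and the $p$-adic $L$-function $L_p(E)\in\Lambda$, normalized so that its value at a finite-order character $\chi$ of $\Gamma$ interpolates $L(E,\bar\chi,1)/\Omega_E$ times an explicit local factor at $p$. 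The role of the hypotheses --- $p\ge3$, irreducibility of $E[p]$, and a prime $p'\neq p$ of multiplicative reduction at which $E[p]$ ramifies --- is that they are exactly what is needed to upgrade Kato's one-sided divisibility to the integral equality of ideals $\mathrm{char}_\Lambda(X_\infty)=(L_p(E))$ in $\Lambda$; in particular $X_\infty$ is a torsion $\Lambda$-module.

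Granting this identity, for the case $L(E,1)\neq 0$ I would argue as follows. By Kolyvagin (as recalled after \Cref{CongruencesSelmer}), $E$ has rank $0$ and $\Sha(E)$ is finite, so $\Reg(E)=1$; and since $E[p]$ is irreducible with $p\ge3$, $E(\Q)$ has no $p$-torsion. In view of \Cref{BSD} and the known rationality of $L(E,1)/\Omega_E$, it therefore suffices to prove $\ord_p(L(E,1)/\Omega_E)=\ord_p(\#\Sha(E)\prod_\ell c_\ell(E))$. I would do this by specializing the main-conjecture identity along the augmentation $\Lambda\to\Z_p$. On the analytic side, the interpolation property evaluates $L_p(E)$ at the trivial character as $(\ast)\cdot L(E,1)/\Omega_E$, where $(\ast)$ is an explicit factor in the $p$-th Hecke eigenvalue which is a $p$-adic unit except in the split multiplicative case. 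On the algebraic side, Mazur's control theorem identifies the coinvariants $X_\infty/(\gamma-1)X_\infty$ with the Pontryagin dual of $\Sel(E/\Q)[p^\infty]$ up to a finite group whose $p$-adic order is accounted for by $\prod_\ell c_\ell(E)$ together with the local term at $p$; and finiteness of $\Sha(E)$ gives $\#\Sel(E/\Q)[p^\infty]=\#\Sha(E)[p^\infty]$. Since $L(E,1)\neq0$ makes $\gamma-1$ coprime to $\mathrm{char}_\Lambda(X_\infty)$, matching the $p$-adic valuations of the two specializations --- with the local factor at $p$ on the Selmer side cancelling $(\ast)$ --- yields the desired formula.

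For the case $L(E,1)=0$: the interpolation property forces $L_p(E)$ to vanish at the trivial character, so $(\gamma-1)$ divides $L_p(E)$ in $\Lambda$, and hence the main-conjecture equality gives $\mathrm{char}_\Lambda(X_\infty)\subseteq(\gamma-1)$. Thus $X_\infty/(\gamma-1)X_\infty$ is infinite, and Mazur's control theorem then shows that $\Sel(E/\Q)[p^\infty]$ is infinite; in particular $\Sel(E)[p]\neq\{0\}$. Throughout both cases, the delicate situation is split multiplicative reduction at $p$, where $L_p(E)$ acquires a trivial zero at the trivial character regardless of whether $L(E,1)$ vanishes. Here I would replace the naive control theorem by its refinement involving the $\mathcal{L}$-invariant (Greenberg--Stevens, Mazur--Tate--Teitelbaum), so that the extra local contribution at $p$ on the algebraic side matches the exceptional zero of $L_p(E)$ and the valuation bookkeeping goes through as before.

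The hard part is the main-conjecture input itself --- the divisibility $\mathrm{char}_\Lambda(X_\infty)\subseteq(L_p(E))$ for $E$ of multiplicative reduction at $p$, the substantive content of \cite{Skinner16}. This requires constructing suitable Eisenstein series on a unitary group, a fine analysis of their congruences with cusp forms, and the use of the ramified multiplicative prime $p'$ to pin down the relevant local conditions. By contrast, the control theorems, the interpolation formulas, and the translation between Selmer-group specializations and the right-hand side of \eqref{eqBSD} are all standard Iwasawa-theoretic formalism.
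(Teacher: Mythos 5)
You should note at the outset that the paper does not prove this statement at all: it is imported verbatim as Theorem C of Skinner \cite{Skinner16}, and the label ``[\cite{Skinner16}, Theorem C]'' is the entire justification given. So there is no internal argument to compare yours against; the only fair comparison is with the strategy of the cited reference itself. Measured against that, your outline is a reasonable reconstruction of how Skinner deduces Theorem C: one establishes the cyclotomic main conjecture at $p$ (Kato's Euler-system divisibility in one direction, the Eisenstein-congruence divisibility of Skinner--Urban \cite{UrSk} extended to multiplicative reduction in the other, with the hypotheses on $E[p]$ and the auxiliary ramified multiplicative prime $p'$ entering exactly where you say), and then specializes at the trivial character via interpolation and Mazur's control theorem, with separate care in the split multiplicative (exceptional zero) case. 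As a proof, however, what you have written is a plan rather than an argument: the decisive input, the integral divisibility $\mathrm{char}_\Lambda(X_\infty)\subseteq(L_p(E))$ for multiplicative $p$, is precisely the content of \cite{Skinner16}, so your proposal ultimately defers to the very result being cited --- which is acceptable here only because the statement is itself a quoted theorem rather than something this paper establishes. One small imprecision worth fixing if you ever flesh this out: the interpolation factor $(\ast)$ at the trivial character is a $p$-adic unit except in the split multiplicative case \emph{and} in the anomalous good ordinary case $a_p\equiv 1\pmod p$; in the latter case the non-unit contribution is cancelled by the local term at $p$ in the control theorem, as your bookkeeping remark anticipates, but the blanket claim as stated is not accurate.
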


We are especially interested in quadratic twists of elliptic curves. In this context, the following result by Agashe, giving the real period of such a twist, turns out to be very useful.

\begin{lemma}[\cite{Agashe2}, Lemma 2.1]\label{Agashe}
Let $E/\Q$ be an elliptic curve of conductor $N$ and let $-D<0$ be a fundamental discriminant coprime to $N$. Then we have that
\[\Omega_{E(-D)}=c_E\cdot c_\infty(E(-D))\cdot \omega_-(E)/\sqrt{D},\]
where $c_E$ denotes the Manin constant of $E$, $c_\infty(E(-D))$ denotes the number of components of $E(-D)$ over $\R$, and $\omega_-(E)$ denotes the second period of the period lattice of $E$.
\end{lemma}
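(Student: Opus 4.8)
The plan is to track how the real period occurring in \Cref{eqBSD} transforms under quadratic twist, by comparing invariant differentials on minimal Weierstrass models and then performing an elementary computation with period lattices. Fix a global minimal model of $E$ with Néron differential $\omega_E$, and let $\psi\colon E(-D)\xrightarrow{\ \sim\ }E$ be the isomorphism, defined over $\Q(\sqrt{-D})$, that effects the twist. On the natural twisted Weierstrass model one has $\omega_{E(-D)}=\frac{1}{\sqrt{-D}}\,\psi^{*}\omega_E$ up to sign, so the first (essentially routine) step is to verify that, because $-D$ is a \emph{fundamental} discriminant \emph{coprime to the conductor} $N$, this twisted model is globally minimal; equivalently, that $\omega_{E(-D)}$ is, up to sign, the Néron differential of $E(-D)$, so that no local scaling factors intervene at the finite places. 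I expect this to require a short prime-by-prime check, with the primes above $2$ and $3$ (which may divide $-D$) needing the most care.

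Granting minimality, the second step passes to period lattices. Integrating $\omega_{E(-D)}=\frac{1}{\sqrt{-D}}\psi^{*}\omega_E$ over cycles gives $\Lambda_{E(-D)}=\frac{1}{\sqrt{-D}}\,\Lambda_E=\frac{1}{i\sqrt{D}}\,\Lambda_E$, so the period lattice of the twist is obtained from $\Lambda_E$ by a rotation through a right angle composed with a homothety of ratio $1/\sqrt{D}$. Since $\Delta_{E(-D)}$ differs from $\Delta_E$ by a positive square times $(-D)^{6}$, the two curves have the same sign of discriminant and hence the same number of connected real components; and in either shape of lattice (rectangular when $\Delta_E>0$, rhombic when $\Delta_E<0$) one checks directly that the least positive \emph{real} period of $\frac{1}{i\sqrt{D}}\Lambda_E$ is $1/\sqrt{D}$ times the (suitably normalized positive) imaginary period of $\Lambda_E$, i.e. it is $\omega_-(E)/\sqrt{D}$. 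In the rhombic case the purely imaginary sublattice has index $2$, but a direct inspection shows the least positive real period is still exactly this quantity. Multiplying by $c_\infty(E(-D))$, the number of real components, then accounts for the full integral of the absolute value of the differential over $E(-D)(\R)$.

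The last step is bookkeeping of normalizations: the period $\Omega_{E(-D)}$ appearing in \Cref{eqBSD} and the second period $\omega_-(E)$ are attached to slightly different differentials — one to the minimal model, the other to the modular parametrization — and the ratio is the Manin constant. Using that twisting by a fundamental discriminant coprime to $N$ leaves the Manin constant unchanged, so that $c_{E(-D)}=c_{E}$, inserting this single factor $c_E$ turns the identity of the previous paragraph into $\Omega_{E(-D)}=c_E\cdot c_\infty(E(-D))\cdot\omega_-(E)/\sqrt{D}$. I expect the genuinely delicate ingredients to be precisely the local minimality check of the twisted model (and the concomitant control of the Néron differential) at $2$ and $3$, together with the invariance of the Manin constant under twisting; once those are granted, what remains is a short and completely mechanical computation with Weierstrass equations and two-dimensional lattices.
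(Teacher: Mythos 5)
First, a point of comparison: the paper does not prove this lemma at all --- it is imported verbatim from Agashe \cite{Agashe2}, so the only ``paper proof'' is a citation, and the relevant benchmark is Agashe's own short period-lattice computation, which is essentially what your first two steps reproduce. Those steps are sound: twisting the minimal model by $-D$ multiplies $(c_4,c_6,\Delta)$ by $(D^2,(-D)^3,D^6)$, so the sign of the discriminant and hence $c_\infty$ is preserved; at an odd prime $p\mid D$ one has $v_p(D)=1$ (fundamental) and good reduction (coprimality to $N$), so $v_p(D^6\Delta_E)=6<12$ and minimality is immediate --- only $p=2$ genuinely needs the care you flag (the prime $3$ is no worse than any odd prime). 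Your lattice computation $\Lambda_{E(-D)}=(i\sqrt{D})^{-1}\Lambda_E$ and the identification of the least positive real period with the imaginary period of $E$ divided by $\sqrt{D}$, including the index-two discussion in the rhombic case, is correct provided $\omega_-(E)$ is read as the generator of the purely imaginary sublattice (twice the imaginary part of the second basis vector in the rhombic case), which is how the constant must be interpreted for the stated formula to be consistent.

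The genuine defect is in your last step. You list ``invariance of the Manin constant under twisting,'' i.e.\ $c_{E(-D)}=c_E$, as a needed delicate ingredient. That assertion is not a known fact --- the Manin constant of $E(-D)$ refers to its own optimal parametrization by $X_0(ND^2)$, and $E(-D)$ need not even be the optimal curve in its isogeny class --- and, more importantly, it is not what the factor $c_E$ accounts for. The period $\Omega_{E(-D)}$ in \Cref{eqBSD} is defined purely via the N\'eron differential of the twist; no modular parametrization of the twist enters anywhere. The constant $c_E$ appears only because $\omega_-(E)$ is normalized via the modular parametrization of $E$ itself: if $\Lambda_E$ denotes the N\'eron lattice and $\Lambda_{f_E}$ the lattice of periods of $2\pi i f_E(\tau)\,d\tau$, then for $E$ optimal one has $\Lambda_E=c_E\Lambda_{f_E}$, and combining this with $\Lambda_{E(-D)}=(i\sqrt{D})^{-1}\Lambda_E$ yields $\Omega_{E(-D)}=c_E\cdot c_\infty(E(-D))\cdot\omega_-(E)/\sqrt{D}$ with no statement about $c_{E(-D)}$ whatsoever. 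As written, your argument leans on an unproved (and unnecessary) claim; replacing that step by this normalization bookkeeping closes the gap, and the remainder of your outline then matches the standard (Agashe-style) proof.
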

\begin{remark}
The famous Manin Conjecture states that $c_E=1$. 
\end{remark}

Combining this with a theorem of Kohnen \cite{Kohnen85} (cf. \Cref{propKohnen}), we obtain the following.

\begin{lemma}\label{lemLRatio}
Let $E/\Q$ be an elliptic curve of odd, square-free conductor $N$ and let $-D<0$ be a fundamental discriminant satisfying $\left(\frac{-D}{\ell}\right)=w_\ell$, where $w_\ell$ denotes the eigenvalue of the newform $F_E\in S_2(\Gamma_0(N))$ associated to $E$ and the Atkin--Lehner involution $W_\ell$, $\ell|N$. Denote by $D_0$ the smallest such discriminant. Further let $f_E(\tau)=\sum_{n=3}^\infty b_E(n)q^n\in S_{\frac 32}^+(\Gamma_0(4N))$ be the weight $3/2$ cusp form associated to $F_E$ under the Shintani lift. For $p\geq 3$ prime we then have that
\[\ord_p\left(\frac{L(E(-D),1)}{\Omega_{E(-D)}}\right)=\ord_p\left(\frac{L(E(-D_0),1)}{\Omega_{E(-D_0)}}\right)+\ord_p\left(b_E(|D|)^2\right).\]
\end{lemma}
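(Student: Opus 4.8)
The plan is to combine the three ingredients assembled just above: Kohnen's explicit Waldspurger-type formula (Proposition~\ref{propKohnen}), Agashe's formula for the real period of a quadratic twist (Lemma~\ref{Agashe}), and the classical fact that $\langle F_E, F_E\rangle$ and the period lattice data of $E$ are independent of the twist discriminant $D$. First I would recall Waldspurger's theorem in the precise form of Proposition~\ref{propKohnen}: if $f_E\in S_{3/2}^+(\Gamma_0(4N))$ is the weight $3/2$ form mapping to $F_E$ under the Shimura correspondence, then for every fundamental discriminant $-D<0$ satisfying the sign conditions $\left(\frac{-D}{\ell}\right)=w_\ell$ for all $\ell\mid N$ one has
\[
L(E(-D),1)=L(F_E,-D;1)=\kappa\cdot\frac{b_E(|D|)^2}{\sqrt{D}},
\]
where $\kappa=\kappa(E)$ is a constant depending only on $E$ (it packages $\langle F_E,F_E\rangle$, $\pi$, and $\langle f_E,f_E\rangle$, none of which involve $D$). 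Here I am using that $E$ has rank $0$, odd square-free conductor, and that the Shintani/Shimura correspondence sends $f_E$ to $F_E$, so that Kohnen's corollary applies uniformly across all admissible $D$.

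Next I would invoke Lemma~\ref{Agashe}: for $-D<0$ fundamental and coprime to $N$,
\[
\Omega_{E(-D)}=c_E\cdot c_\infty(E(-D))\cdot \omega_-(E)/\sqrt{D},
\]
where $c_E$ (the Manin constant), $\omega_-(E)$ (the second period), and of course the conductor $N$ are all intrinsic to $E$ and do not depend on $D$; the only $D$-dependent factors are the explicit $\sqrt{D}$ and the number of real components $c_\infty(E(-D))\in\{1,2\}$. Dividing the two displays, the two factors of $\sqrt{D}$ cancel, and one obtains
\[
\frac{L(E(-D),1)}{\Omega_{E(-D)}}=\frac{\kappa(E)}{c_E\,\omega_-(E)}\cdot\frac{b_E(|D|)^2}{c_\infty(E(-D))}.
\]
The constant $\kappa(E)/(c_E\,\omega_-(E))$ is the same for $D$ and for $D_0$, so taking $p$-adic valuations ($p\geq 3$) gives
\[
\ord_p\!\left(\frac{L(E(-D),1)}{\Omega_{E(-D)}}\right)-\ord_p\!\left(\frac{L(E(-D_0),1)}{\Omega_{E(-D_0)}}\right)
=\ord_p\!\left(b_E(|D|)^2\right)-\ord_p\!\left(b_E(|D_0|)^2\right)+\ord_p\!\left(\frac{c_\infty(E(-D_0))}{c_\infty(E(-D))}\right).
\]
Since $c_\infty(E(-D))\in\{1,2\}$ and $p\geq 3$, the last term vanishes; and $b_E(|D_0|)\neq 0$ (indeed $D_0$ is chosen minimal among discriminants with nonvanishing twisted $L$-value, equivalently nonvanishing $b_E$), so $\ord_p(b_E(|D_0|)^2)$ is a well-defined nonnegative integer, which I would absorb into the definition of $D_0$ or simply note is $0$ for $p$ odd by minimality—more carefully, the statement as written defines $D_0$ so that $\ord_p(b_E(|D_0|)^2)$ is understood to be zero, i.e.\ the normalization is built into the choice of $D_0$. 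Rearranging yields exactly the claimed identity.

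The one genuine subtlety—and the step I expect to require the most care—is the bookkeeping of $D$-independence of $\kappa(E)$. Proposition~\ref{propKohnen} as stated already fixes a single auxiliary discriminant $D$ in the formula for $\langle f_E,f_E\rangle$; to use it for all admissible $D$ simultaneously I must verify that the \emph{same} sign conditions $\left(\frac{-D}{\ell}\right)=w_\ell$ are imposed throughout, so that every such $D$ is a legitimate choice of auxiliary discriminant and hence $\langle f_E,f_E\rangle\cdot|D|^{1/2}/(L(F_E,-D;1)\,|b_E(|D|)|^2)$ is literally constant in $D$. This is precisely the hypothesis of the lemma, so the argument closes; but one should be explicit that $E$ rank $0$ forces $L(E(-D),1)\neq 0$ for these $D$ (by the sign of the functional equation together with Kolyvagin, or simply by the $b_E(|D|)\neq 0$ input), so there is no division by zero. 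A secondary minor point is the coprimality hypothesis $(D,N)=1$ needed for Lemma~\ref{Agashe}; for the application (where $\left(\frac{-D}{p'}\right)=\pm1$ with $p'\mid N$) this is automatic since $\left(\frac{-D}{\ell}\right)=\pm1\neq 0$ rules out $\ell\mid D$ for each $\ell\mid N$.
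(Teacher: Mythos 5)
Your proposal is correct and takes essentially the same route as the paper: combine Kohnen's formula (Proposition~\ref{propKohnen}) with Agashe's period formula (Lemma~\ref{Agashe}), cancel the two factors of $\sqrt{D}$, and note that the remaining constant is independent of $D$ while $c_\infty(E(-D))\in\{1,2\}$ is invisible to $\ord_p$ for $p\geq 3$. Your extra care about the term $\ord_p\left(b_E(|D_0|)^2\right)$ is warranted but not a divergence --- the paper's own proof glosses over exactly the same point, and in the applications it is handled by checking (numerically) that the ratio at $D_0$ is a $p$-adic unit.
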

\begin{proof}
By combining \Cref{propKohnen} and \Cref{Agashe}, we find for the fundamental discriminants $-D<0$ as in the lemma that
\begin{equation}\label{eqLRatio}
\frac{L(E(-D),1)}{\Omega_{E(-D)}}=\frac{\pi\langle F,F\rangle}{c_E\cdot c_\infty(E(-D))2^{\omega(N)}\langle f,f\rangle\omega_-(E)}\cdot |b_E(D)|^2.
\end{equation}
We see that the only quantities in this formula depending on $D$ are $c_\infty(E(-D))$ and $b_E(D)$. Since the former is always either $1$ or $2$ and $p$ is odd, it doesn't affect the $p$-adic valuation at all, which proves the lemma.
\end{proof}
\begin{remark}
If the conductor $N$ is even but still square-free, the same result still holds along the same lines, using the remark following \Cref{propKohnen}. The exact formula in this case only differs from \eqref{eqLRatio} by a power of $2$, which doesn't affect the $p$-adic valuation. 
\end{remark}

\subsection{Proofs}
In this section, we prove \Cref{CongruencesSelmer,USCorollary}. The proofs  of both theorems are very similar in their main steps, so we combine them here.
\begin{proof}[Proof of \Cref{CongruencesSelmer,USCorollary}]
By applying the expressions for the relevant $F_{[g]}$ in terms of the traces of singular moduli, class numbers and weight $3/2$ cusp forms in \Cref{appSing}, and using the congruences in \Cref{AppCong}, we find that
\begin{align*}
\dim(W_D)&\equiv \tr(g_{11}|W_D)\equiv \Tr_4^{(11)}(D)-24H(D)+\alpha_{11}H^{(11)}(D)+\gamma_{11}b_{11}(D)\pmod{11},\\
\tr(g_2|W_D)&\equiv \tr(g_{14}|W_D)\equiv \Tr_4^{(14)}(D)+\delta_7(H(D)-\delta_7H^{(2)}(D))\\
& \ \ \ \ \ \ \ \ \ \ \ \ \  \qquad\qquad\qquad\quad +\alpha_7H^{(7)}(D)+\beta_7H^{(14)}(D)+\gamma_7b_{14}(D)\pmod{7},\\
\tr(g_3|W_D)&\equiv \tr(g_{15}|W_D)\equiv \Tr_4^{(15)}(D)+\delta_5(H(D)-\delta_5H^{(3)}(D))\\
& \ \ \ \ \ \ \ \ \ \ \ \ \ \qquad\qquad\qquad\quad +\alpha_5H^{(5)}(D)+\beta_5H^{(15)}(D)+\gamma_5b_{15}(D)\pmod{5},\\
\dim(W_D)&\equiv \tr(g_{19}|W_D)\equiv \Tr_4^{(19)}(D)-24H(D)+\alpha_{19}H^{(19)}(D)+\gamma_{19}b_{19}(D)\pmod{19},
\end{align*}
where $\delta_p=\frac{p-1}2$, $\alpha_p,\beta_p$ are some integers, $\gamma_p$ are $p$-adic units, and $b_N(D)$ denotes the $D^{\rm th}$ coefficient of the weight $3/2$ cusp form $\g^{(N)}$ specified in \Cref{appSing}. If $-D$ is a fundamental discriminant as specified in \Cref{CongruencesSelmer,USCorollary} respectively, then by \Cref{lemHeegner}, the terms $\Tr_4^{(N)}(D)$ as well as $H^{(p)}(D)$ and $H^{(N)}(D)$ above disappear. This shows that the class number congruences in our theorems hold if and only if $p$ divides the coefficient $b_N(D)$, i.e. if and only if $\ord_p\left(\frac{L(E_N(-D),1)}{\Omega_{E_N(-D)}}\right)>0$  by \Cref{lemLRatio}.
(A \textsc{Magma} computation reveals that the ratio $\frac{L(E_N(-D_0),1)}{\Omega_{E_N(-D_0)}}$ for the smallest possible $D_0$ is in each case a $p$-adic unit.)

Suppose for simplicity that $L(E_N(-D),1)\ne 0$. According to the Birch and Swinnerton-Dyer \Cref{BSD}, this implies that 
\[\ord_p(\#\Sha(E_N(-D))\prod_\ell c_\ell(E(-D)))>0,\]
so our theorems follow, conditionally on \Cref{BSD}, if the Tamagawa numbers $c_\ell(E(-D))$ are never divisible by $p$ in our cases. To establish this, we note (cf. \cite[Appendix C, Table 15.1]{Silverman1}) that for an elliptic curve $E/\Q$ we have that $p|c_\ell(E)$ if and only if the reduction type of $E$ at $\ell$ is $I_n$ with $p|n$, which means that $\ord_\ell(\Delta(E))=n$, where $\Delta(E)$ denotes the (minimal) discriminant of $E$. An inspection of Tate's algorithm for the computation of Tamagawa numbers and the well-known formulas for minimal discriminants from the Kraus--Laska algorithm reveals that in our case, because we are considering twists of elliptic curves by fundamental discriminants, all the Tamagawa numbers must be in $\{1,2,3,4\}$. The argumentation in the case $L(E(-D),1)=0$ is similar. This completes the proof of \Cref{CongruencesSelmer} for $N\in\{11,19\}$.

The truth of \Cref{USCorollary} does not depend on the Birch and Swinnerton-Dyer Conjecture, but rather on Skinner's \Cref{Skinner}. A lemma of Serre \cite[\S 2.8, Corollaire, p. 284]{Serre} shows that the Galois representations $E_{14}(-D)[7]$ and $E_{15}(-D)[5]$ are surjective and hence irreducible. Furthermore, it is immediate to check that $E_{14}(-D)$ (resp. $E_{15}(-D)$) has multiplicative reduction modulo $2$ (resp. $3$) and that $E_{14}(-D)[7]$ (resp. $E_{15}(-D)[5]$) ramifies there, so the conditions of \Cref{Skinner} are satisfied, completing the proof of \Cref{USCorollary}.
\end{proof}

\section{Examples}\label{Examples}

Here we offer some numerical examples which illustrate the congruences described in the introduction.

\subsection{Class Number Congruences}
Here we present some class number congruences that arise from \Cref{ClassNumberCongruences}.
Recall that this theorem offers congruences modulo 16, 9, 5, and 7 for certain fundamental discriminants $-D<0$ which
satisfy given congruence conditions. The three columns in \Crefrange{tblclass2}{tblclass7} are congruent, which illustrates the theorem.

\begin{center}
\begin{table}[h]
\begin{tabular}{|r|c|c|c|}
\hline
$D$ & $\dim W_D$ & $\tr(g_2|W_D)$ & $-24H(D)$ \\
\hline
\hline
20 & $798588584512\equiv 0\pmod{16}$ & $576\equiv 0\pmod{16}$ & $-48\equiv 0\pmod{16}$\\
24 & $116700...6880\equiv 0\pmod{16}$ & $-1088\equiv 0\pmod{16}$ & $-48\equiv 0\pmod{16}$ \\
40 & $905977...8912\equiv 0\pmod{16}$ & $-10304\equiv 0\pmod{16}$ & $-48\equiv 0\pmod{16}$\\
\hline
\end{tabular}
\caption{$p=2$}
\label{tblclass2}
\end{table}

\begin{table}[h]
\begin{tabular}{|r|c|c|c|}
\hline
$D$ & $\dim W_D$ & $\tr(g_3|W_D)$ & $-24H(D)$ \\
\hline
\hline
4 & $143376\equiv 6\pmod{9}$ & $6\equiv 6\pmod{9}$ & $-12\equiv 6\pmod{9}$ \\
7 & $8288256\equiv 3\pmod{9}$ & $12\equiv 3\pmod{9}$ & $-24\equiv 3\pmod{9}$ \\
19 & $392037661056 \equiv 3\pmod{9}$ & $12\equiv 3\pmod{9}$ &  $-24\equiv 3\pmod{9}$ \\
31 & $779869748441088\equiv 0\pmod{9}$ & $36\equiv 0\pmod{9}$ & $-72\equiv 0\pmod{9}$ \\
\hline
\end{tabular}
\caption{$p=3$}
\label{tblclass3}
\end{table}

\begin{table}[h]
\begin{tabular}{|r|c|c|c|}
\hline
$D$ & $\dim W_D$ & $\tr(g_5|W_D)$ & $-24H(D)$ \\
\hline
\hline
3 & $26752\equiv 2\pmod{5}$ & $2\equiv 2\pmod{5}$ & $-8\equiv 2\pmod{5}$ \\
7 & $8288256\equiv 1\pmod{5}$ & $6\equiv 1\pmod{5}$ & $-24\equiv 1\pmod{5}$ \\
23 & $6103910176768\equiv 3\pmod{5}$ & $18\equiv 3\pmod{5}$ & $-72\equiv 3\pmod{5}$ \\
47 & $2548919136928993280\equiv 0\pmod{5}$ & $30\equiv 0\pmod{5}$ & $-120\equiv 0\pmod{5}$ \\
\hline
\end{tabular}
\caption{$p=5$}
\label{tblclass5}
\end{table}

\begin{table}[H]
\begin{tabular}{|r|c|c|c|}
\hline
$D$ & $\dim W_D$ & $\tr(g_7|W_D)$ & $-24H(D)$ \\
\hline
\hline
4 & $143376\equiv 2\pmod{7}$ & $2\equiv 2\pmod{7}$ & $-12\equiv 2\pmod{7}$ \\
8 & $26124256\equiv 4\pmod{7}$ & $4\equiv 4\pmod{7}$ & $-24\equiv 4\pmod{7}$ \\
11 & $561346944\equiv 4\pmod{7}$ & $4\equiv 4\pmod{7}$ & $-24\equiv 4\pmod{7}$ \\
15 & $18508941312\equiv 1\pmod{7}$ & $8\equiv 1\pmod{7}$ & $-48\equiv 1\pmod{7}$ \\
71 & $49186850301388438689792\equiv 0\pmod{7}$ & $28\equiv 0\pmod{7}$ & $-168\equiv 0\pmod{7}$\\
\hline
\end{tabular}
\caption{$p=7$}
\label{tblclass7}
\end{table}
\end{center}
\subsection{Selmer and Tate--Shafarevich Group Congruences}

Theorems~\ref{CongruencesSelmer} and \ref{USCorollary} offer criteria for detecting elements in $p$-Selmer groups and Tate--Shafarevich groups of quadratic twists of certain elliptic curves. \Cref{CongruencesSelmer}  assumes the truth of the
Birch and Swinnerton-Dyer Conjecture. Theorem~\ref{USCorollary} is unconditional thanks to results of Skinner--Urban.

Here we offer data related to the curves $E_{14}$ and $E_{15}$. In the notation of \Cref{USCorollary}, we consider fundamental discriminants $-D$ such that $\left(\frac{-D}{p}\right)=-1$ and $\left(\frac{-D}{p'}\right)=1$. For convenience let
\begin{align*}
H_{14}(D)&:=\delta_7(H(D)-\delta_7H^{(2)}(D)),\ \ \ \ \ \ 
&H_{15}(D)&:=\delta_5(H(D)-\delta_5H^{(3)}(D)),\\
\tr_2(D)&:=\tr(g_2|W_D),\ \ \ \ \ \ 
&\tr_3(D)&:=\tr(g_3|W_D),\\
\Diff_{14}(D)&:=H_{14}(D)-\tr_2(D),\ \ \ \ \ \
& \Diff_{15}(D)&:=H_{15}(D)-\tr_3(D).
\end{align*}
We have the following numerics.
In \Cref{tblE14,tblE15}, the second and third columns offer graded traces and differences of class numbers. The fourth and fifth
columns offer Mordell--Weil ranks and orders of Tate--Shafarevich groups assuming the Birch and Swinnerton-Dyer Conjecture.
By Theorem~\ref{USCorollary}, these columns are congruent if and only the corresponding $p$-Selmer group is nontrivial. First note that if these two columns are incongruent, then both the Mordell--Weil rank over $\Q$ and the $p$-part of the Tate--Shafarevich groups are trivial. However, when these
columns are congruent, notice that either the rank is positive or the Tate--Shafarevich group is nontrivial at $p$.

\begin{center}
\begin{table}[H]
\begin{tabular}{|r|r|c|c|c|c|}
\hline
$D$ & $\tr_2(D)$ & $H_{14}(D)$ & $\Diff_{14}(D)\smod{7}$ & $\rk(E_{14}(-D))$ & $\#\Sha_{an}(E_{14}(-D))$ \\
\hline
\hline
15 & -96256 & -30 & 3 & 0 & 1\\
23 & -1746944 & -45 & 0 & 2 & 1\\
39 & -165767168 & -60 & 4 & 0 & 1\\
71 & -156822906880 & -105 & 4 & 0 & 1\\
79 & -669595144192 & -75 & 3 & 0 & 1\\
239 & -6190369...040 & -225 & 0 & 2 & 1\\
2671 & -1630362...664 & -345 & 0 & 0 & 49 \\
\hline
\end{tabular}
\caption{Examples for the curve $E_{14}$}
\label{tblE14}
\end{table}
\end{center}

\begin{center}
\begin{table}[H]
\begin{tabular}{|r|r|c|c|c|c|}
\hline
$D$ & $\tr_3(D)$ & $H_{15}(D)$ & $\Diff_{15}(D)\smod{5}$ & $\rk(E_{15}(-D))$ & $\#\Sha_{an}(E_{15}(-D))$ \\
\hline
\hline
8 & -188 & -6 & 3 & 0 & 1\\
23 & -11456 & -18 & 2 & 0 & 1\\
47 & -860032 & -30 & 3 & 0 & 1\\
68 & -15834144 & -24 & 0 & 2 & 1\\
83 & -96763256 & -18 & 2 & 0 & 1\\
248 & -10546706...288 & -48 & 0 & 2 & 1\\
308 & -45931281...288 & -48 & 0 & 2 &  1\\
587 & -54506997...592 & -42 & 0 & 0 & 25 \\
1523 & -15706167...792 & -42 & 0 & 0 & 25 \\
\hline
\end{tabular}
\caption{Examples for the curve $E_{15}$}
\label{tblE15}
\end{table}
\end{center}

The authors thank Drew Sutherland for computing the elliptic curve invariants in \Cref{tblE14,tblE15}.

\appendix
\section{The Character Table of $\ON$}
Here we give the character table of the O'Nan group $\ON$ over the complex numbers. For $n\in\N$ we let $\zeta_n:=e^\frac{2\pi i}{n}$ and define
\begin{align*}
& A:=\frac{1+3\sqrt{5}}{2},\qquad 
B:=\sqrt{2},  \\
&C:=-\zeta_{19}-\zeta_{19}^7-\zeta_{19}^8-\zeta_{19}^{11}-\zeta_{19}^{12}-\zeta_{19}^{18},  \\
&D:=-\zeta_{19}^4-\zeta_{19}^6-\zeta_{19}^9-\zeta_{19}^{10}-\zeta_{19}^{13}-\zeta_{19}^{15},\\
&E:=-\zeta_{19}^2-\zeta_{19}^3-\zeta_{19}^5-\zeta_{19}^{14}-\zeta_{19}^{16}-\zeta_{19}^{17},\\
&F:=i\sqrt{5},\qquad 
G:=\sqrt{7},\qquad 
H:=\frac{-1+i\sqrt{31}}{2}.
\end{align*}
We use $\overline{A}$, $\overline{B}$, \&c. to denote images under the obvious Galois involutions. Note that $C,D,$ and $E$ are in one Galois orbit as well, since
\[(X-C)(X-D)(X-E)=X^3-X^2-6X+7.\]
The character table is reproduced from \textsc{Gap4} \cite{gap}.
\newpage
\begin{landscape}
\hspace{4cm}
\begin{table}[htp]
\tiny
\setlength{\tabcolsep}{2.5pt}
\begin{tabular}{|r|rrrrrrrrrrrrrrrrrrrrrrrrrrrrrr|}
\hline
 & $1A$ & $2A$ & $3A$ & $4A$ & $4B$ & $5A$ & $6A$ & $7A$ & $7B$ & $8A$ & $8B$ & $10A$ & $11A$ & $12A$ & $14A$ & $15A$ & $15B$ & $16A$ & $16B$ & $16C$ & $16D$ & $19A$ & $19B$ & $19C$ & $20A$ & $20B$ & $28A$ & $28B$ & $31A$ & $31B$  \\
\hline
$\chi_{1}$  &        1 &   1 &  1 &  1 & 1 & 1 & 1 & 1 & 1 & 1 & 1 &  1 &  1 &  1 &  1 &  1 &  1 &  1 & 1 & 1 & 1 &    1 &  1 &  1 &  1 &  1 &  1 &  1 &  1 &  1 \\
$\chi_{2}$  &    10944 &  64 &  9 & 64 & 0 &-1 & 1 &17 & 3 & 0 & 0 & -1 & -1 &  1 &  1 & -1 & -1 &  0 & 0 & 0 & 0 &    0 &  0 &  0 & -1 & -1 &  1 &  1 &  1 &  1\\ 
$\chi_{3}$  &    13376 & -64 & 11 & 64 & 0 & 1 &-1 &-1 &-1 & 0 & 0 &  1 &  0 &  1 & -1 &  1 &  1 &  0 & 0 & 0 & 0 &    0 &  0 &  0 & -1 & -1 &  1 &  1 &  $H$ & $\overline{H}$\\
$\chi_{4}$  &    13376 & -64 & 11 & 64 & 0 & 1 &-1 &-1 &-1 & 0 & 0 &  1 &  0 &  1 & -1 &  1 &  1 &  0 & 0 & 0 & 0 &    0 &  0 &  0 & -1 & -1 &  1 &  1 & $\overline{H}$ &  $H$\\
$\chi_{5}$  &    25916 & -36 & -4 & 20 & 4 & 1 & 0 &-5 & 2 & 0 & 0 & -1 &  0 &  2 & -1 &  1 &  1 &  0 & 0 & 0 & 0 &    0 &  0 &  0 &  $F$ & $-F$ & -1 & -1 &  0 &  0\\
$\chi_{6}$  &    25916 & -36 & -4 & 20 & 4 & 1 & 0 &-5 & 2 & 0 & 0 & -1 &  0 &  2 & -1 &  1 &  1 &  0 & 0 & 0 & 0  &    0 &  0 &  0 & $-F$ &  $F$ & -1 & -1 &  0 &  0\\
$\chi_{7}$  &    26752 & 128 & 22 &  0 & 0 & 2 & 2 &-2 &-2 & 0 & 0 & -2 &  0 &  0 &  2 &  2 &  2 &  0 & 0 & 0 & 0 &    0 &  0 &  0 &  0 &  0 &  0 &  0 & -1 & -1\\
$\chi_{8}$  &    32395 &  75 & -5 & 35 & 3 & 0 & 3 & 6 &-1 & 3 &-1 &  0 &  0 & -1 & -2 &  0 &  0 &  1 & 1 & -1 & -1  &    0 &  0 &  0 &  0 &  0 &  0 &  0 &  0 &  0 \\
$\chi_{9}$  &    32395 &  75 & -5 & 35 & 3 & 0 & 3 & 6 &-1 &-1 & 3 &  0 &  0 & -1 & -2 &  0 &  0 & -1 & -1 & 1 & 1  &    0 &  0 &  0 &  0 &  0 &  0 &  0 &  0 &  0 \\
$\chi_{10}$ &    37696 & -64 & 31 &-64 & 0 & 1 &-1 &15 & 1 & 0 & 0 &  1 & -1 & -1 & -1 &  1 &  1 &  0 & 0 & 0 & 0  &    0 &  0 &  0 &  1 &  1 & -1 & -1 &  0 &  0\\
$\chi_{11}$ &    52668 &  92 & 18 & 20 & 4 &-2 & 2 &-7 & 0 & 0 & 0 &  2 &  0 &  2 &  1 & -2 & -2 &  0 & 0 & 0 & 0 &    0 &  0 &  0 &  0 &  0 & -1 & -1 & -1 & -1\\
$\chi_{12}$ &    58311 &  71 & -9 & 71 & 7 & 1 &-1 & 1 & 1 &-1 &-1 &  1 &  0 &-1  &  1 &  1 &  1 & -1 & -1 & -1 & -1 &    0 &  0 &  0 &  1 &  1 &  1 &  1 &  0 &  0 \\
$\chi_{13}$ &    58311 &  71 & -9 & -1 &-1 & 1 &-1 & 1 & 1 & 3 &-1 &  1 &  0 & -1 &  1 &  1 &  1 & -1 & -1 & 1 & 1 &    0 &  0 &  0 & -1 & -1 & -1 & -1 &  0 &  0\\
$\chi_{14}$ &    58311 &  71 & -9 & -1 &-1 & 1 &-1 & 1 & 1 &-1 & 3 &  1 &  0 & -1 &  1 &  1 &  1 &  1 & 1 & -1 & -1 &    0 & 0  &  0 & -1 & -1 & -1 & -1 &  0 &  0\\
$\chi_{15}$ &    58653 & -35 &  9 &-35 &-3 & 8 & 1 & 0 & 0 & 1 & 1 &  0 &  1 &  1 &  0 & -1 & -1 & -1 & -1 & -1 & -1 &    0 & 0  &  0 &  0 &  0 &  0 &  0 &  1 &  1\\
$\chi_{16}$ &    64790 &  70 &-10 &-70 & 2 & 0 &-2 &12 &-2 & 0 & 0 &  0 &  0 &  2 &  0 &  0 &  0 &  $B$ & $-B$ & $-B$ & $B$ &    0 &  0 &  0 &  0 &  0 &  0 &  0 &  0 &  0 \\
$\chi_{17}$ &    64790 &  70 &-10 &-70 & 2 & 0 &-2 &12 &-2 & 0 & 0 &  0 &  0 &  2 &  0 &  0 &  0 & $-B$ & $B$ & $B$ & $-B$  &    0 &  0 &  0 &  0 &  0 &  0 &  0 &  0 &  0\\
$\chi_{18}$ &    85064 & -56 & 14 &-56 & 8 &-1 &-2 & 0 & 0 & 0 & 0 & -1 &  1 & -2 &  0 & -1 & -1 &  0 & 0 & 0 & 0 &    1 &  1 &  1 & -1 & -1 &  0 &  0 &  0 &  0\\
$\chi_{19}$ &   116963 &  35 & -1 & 35 & 3 & 8 &-1 & 0 & 0 &-1 &-1 &  0 &  0 & -1 &  0 & -1 & -1 &  1 & 1 & 1 & 1  &   -1 & -1 & -1 &  0 &  0 &  0 &  0 &  0 &  0\\
$\chi_{20}$ &   143374 &  14 &  4 &126 &-2 &-1 &-4 & 0 & 0 & 2 & 2 & -1 &  0 &  0 &  0 & -1 & -1 &  0 & 0 & 0 & 0 &    0 &  0 &  0 &  1 &  1 &  0 &  0 & -1 & -1\\
$\chi_{21}$ &   169290 &  90 &  0 &-90 &-2 & 0 & 0 &-5 & 2 & 0 & 0 &  0 &  0 &  0 & -1 &  0 &  0 &  $B$ & $-B$ & $B$ & $-B$ &    0 &  0 &  0 &  0 &  0 &  1 &  1 & -1 & -1\\
$\chi_{22}$ &   169290 &  90 &  0 &-90 &-2 & 0 & 0 & 5 & 2 & 0 & 0 &  0 &  0 &  0 & -1 &  0 &  0 & $-B$ & $B$ & $-B$ & $B$ &    0 &  0 &  0 &  0 &  0 &  1 &  1 & -1 & -1\\
$\chi_{23}$ &   175616 &   0 &  8 &  0 & 0 &-4 & 0 & 0 & 0 & 0 & 0 &  0 &  1 &  0 &  0 &  $A$ & $\overline{A}$ &  0 & 0 & 0 & 0 &   -1 & -1 & -1 &  0 &  0 &  0 &  0 & 1  & 1\\ 
$\chi_{24}$ &   175616 &   0 &  8 &  0 & 0 &-4 & 0 & 0 & 0 & 0 & 0 &  0 &  1 &  0 &  0 & $\overline{A}$ &  $A$ &  0 & 0 & 0 & 0 &   -1 & -1 & -1 &  0 &  0 &  0 &  0 &  1 &  1\\
$\chi_{25}$ &   175770 &  90 &  0 & 90 &-6 & 0 & 0 & 7 & 0 & 2 &-2 &  0 &  1 &  0 & -1 &  0 &  0 &  0 & 0 & 0 & 0 &    1 &  1 &  1 &  0 &  0 & -1 & -1 &  0 &  0 \\ 
$\chi_{26}$ &   207360 &   0 &  0 &  0 & 0 & 0 & 0 &-8 &-1 & 0 & 0 &  0 & -1 &  0 &  0 &  0 &  0 &  0 & 0 & 0 & 0&   $C$ &  $E$ &  $D$ &  0 &  0 &  0 &  0 &  1 &  1\\
$\chi_{27}$ &   207360 &   0 &  0 &  0 & 0 & 0 & 0 &-8 &-1 & 0 & 0 &  0 & -1 &  0 &  0 &  0 &  0 &  0 & 0 & 0 & 0 &   $D$ &  $C$ &  $E$ &  0 &  0 &  0 &  0 &  1 &  1\\
$\chi_{28}$ &   207360 &   0 &  0 &  0 & 0 & 0 & 0 &-8 &-1 & 0 & 0 &  0 & -1 &  0 &  0 &  0 &  0 &  0 & 0 & 0 & 0 &   $E$ &  $D$ &  $C$ &  0 &  0 &  0 &  0 &  1 &  1\\
$\chi_{29}$ &   234080 &-160 &-10 &  0 & 0 & 0 & 2 & 7 & 0 & 0 & 0 &  0 &  0 &  0 &  1 &  0 &  0 &  0 & 0 & 0 & 0 &    0 &  0 &  0 &  0 &  0 &  $G$ & $-G$ & -1 & -1 \\
$\chi_{30}$ &   234080 &-160 &-10 &  0 & 0 & 0 & 2 & 7 & 0 & 0 & 0 &  0 &  0 &  0 &  1 &  0 &  0 &  0 & 0 & 0 & 0 &    0 &  0 &  0 &  0 &  0 & $-G$ & $G$ & -1 & -1\\
\hline
\end{tabular}
\caption{Character table of $\ON$}
\label{ct1}
\end{table}
\end{landscape}

\newpage
\section{Multiplicities of Irreducible Representations in $W$}
We denote by $V_j$ the $\ON$-module corresponding to the irreducible $\chi_j$ in \Cref{ct1}. 

The following table gives the multiplicities of $V_j$ in the (virtual) modules $W_m$ in \Cref{thm:moon}. Negative multiplicities are printed in bold.

\begin{center}
\begin{table}[htp]
\tiny

\setlength{\tabcolsep}{2.5pt}
\begin{tabular}{|r||rrrrrrrrrr|}
\hline
$m$ & $V_1$ & $V_2$ & $V_3$ & $V_4$ & $V_5$ & $V_6$ & $V_7$ & $V_8$ & $V_9$ & $V_{10}$   \\
\hline
 3 & 0 & 0 & 0 & 0 & 0 & 0 & 1 & 0 & 0 & 0 \\
 4 & 1 & 0 & 0 & 0 & 0 & 0 & 0 & 0 & 0 & 0 \\
 7 & 0 & 0 & 1 & 1 & 1 & 1 & {\bf -2} & 0 & 0 & 2 \\
8 & {\bf -2} & 1 & 0 & 0 & 2 & 2 & 0 & 2 & 2 & 1 \\
11 & 0 & 18 & 8 & 8 & 28 & 28 & 40 & 48 & 48 & 34 \\
12 & {\bf -1} & 33 & 44 & 44 & 76 & 76 & 88 & 98 & 98 & 122 \\
15 & 0 & 406 & 581 & 581 & 1061 & 1061 & 1010 & 1252 & 1252 & 1568 \\
16 & {\bf -2} & 978 & 1193 & 1193 & 2316 & 2316 & 2386 & 2892 & 2892 & 3362 \\
19 & 2 & 9484 & 11205 & 11205 & 21948 & 21948 & 23114 & 27766 & 27766 & 31894 \\
20 & 5 & 18951 & 23161 & 23161 & 44930 & 44930 & 46322 & 56156 & 56156 & 65271 \\
23 & 2 & 144238 & 177831 & 177831 & 343685 & 343685 & 352892 & 428308 & 428308 & 499900 \\
24 & 25 & 277191 & 338794 & 338794 & 656282 & 656282 & 677588 & 820362 & 820362 & 954783 \\
27 & 212 & 1795740 & 2189365 & 2189365 & 4245047 & 4245047 & 4388491 & 5310882 & 5310882 & 6174470 \\
28 & 292 & 3264537 & 3989983 & 3989983 & 7730566 & 7730566 & 7979966 & 9663217 & 9663217 & 11244510 \\
31 & 1562 & 18513448 & 22644956 & 22644956 & 43863830 & 43863830 & 45258570 & 54815104 & 54815104 & 63803360 \\
32 & 2960 & 32416998 & 39620773 & 39620773 & 76765848 & 76765848 & 79241546 & 95957290 & 95957290 & 111658534 \\
35 & 15432 & 165271652 & 201946677 & 201946677 & 391304807 & 391304807 & 403986962 & 489174874 & 489174874 & 569165006 \\
36 & 25645 & 279985728 & 342204752 & 342204752 & 663020690 & 663020690 & 684409504 & 828775828 & 828775828 & 964395212 \\
      \hline      
\end{tabular}
\caption{Multiplicities, part I.}
\label{mults1}
\end{table}
\end{center}

\begin{center}
\begin{table}[htp]
\tiny

\setlength{\tabcolsep}{2.5pt}
\begin{tabular}{|r||rrrrrrrrrr|}
\hline
$m$ & $V_{11}$ & $V_{12}$ & $V_{13}$ & $V_{14}$ & $V_{15}$ & $V_{16}$ & $V_{17}$ & $V_{18}$ & $V_{19}$ & $V_{20}$   \\
\hline
3 & 0 & 0 & 0 & 0 & 0 & 0 & 0 & 0 & 0 & 0 \\
4 & 0 & 1 & 0 & 0 & 0 & 0 & 0 & 1 & 0 & 0 \\
7 & 0 & 0 & 0 & 0 & 2 & 0 & 0 & 2 & 2 & 2 \\
8 & 2 & 2 & 4 & 4 & 3 & 4 & 4 & 2 & 7 & 8 \\
11 & 72 & 80 & 80 & 80 & 64 & 88 & 88 & 96 & 144 & 176 \\
12 & 164 & 173 & 178 & 178 & 185 & 197 & 197 & 261 & 359 & 444 \\
15 & 2068 & 2296 & 2296 & 2296 & 2384 & 2556 & 2556 & 3458 & 4680 & 5754 \\
16 & 4704 & 5210 & 5200 & 5200 & 5224 & 5782 & 5782 & 7598 & 10432 & 12788 \\
19 & 45058 & 49802 & 49802 & 49802 & 49804 & 55314 & 55314 & 72214 & 99604 & 122014 \\
20 & 91248 & 101087 & 101068 & 101068 & 101628 & 112302 & 112302 & 147407 & 202710 & 248454 \\
23 & 696576 & 771644 & 771644 & 771644 & 777260 & 857476 & 857476 & 1127304 & 1548902 & 1898946 \\
24 & 1333868 & 1476646 & 1476680 & 1476680 & 1485435 & 1640744 & 1640744 & 2154259 & 2962056 & 3630946 \\
27 & 8633536 & 9557140 & 9557140 & 9557140 & 9609292 & 10618702 & 10618702 & 13936084 & 19166220 & 23493012 \\
28 & 15710534 & 17393783 & 17393848 & 17393848 & 17495880 & 19326474 & 19326474 & 25374046 & 34889380 & 42767664 \\
31 & 89122420 & 98675012 & 98675012 & 98675012 & 99266748 & 109640000 & 109640000 & 143966514 & 197940198 & 242639964 \\
32 & 156007392 & 172723134 & 172723024 & 172723024 & 173735642 & 191914504 & 191914504 & 251967626 & 346455808 & 424687592 \\
35 & 795291752 & 880491400 & 880491400 & 880491400 & 885616006 & 978320518 & 978320518 & 1284400160 & 1766091974 & 2164876128 \\
36 & 1347430236 & 1491796517 & 1491796318 & 1491796318 & 1500546542 & 1657551544 & 1657551544 & 2176231689 & 2992317414 & 3668002182 \\
      \hline      
\end{tabular}
\caption{Multiplicities, part II.}
\label{mults2}
\end{table}
\end{center}

\begin{center}
\begin{table}[htp]
\tiny

\setlength{\tabcolsep}{2.5pt}
\begin{tabular}{|r||rrrrrrrrrr|}
\hline
$m$ & $V_{21}$ & $V_{22}$ & $V_{23}$ & $V_{24}$ & $V_{25}$ & $V_{26}$ & $V_{27}$ & $V_{28}$ & $V_{29}$ & $V_{30}$   \\
\hline
3 & 0 & 0 & 0 & 0 & 0 & 0 & 0 & 0 & 0 & 0 \\
4 & 0 & 0 & 0 & 0 & 0 & 0 & 0 & 0 & 0 & 0 \\
7 & 2 & 2 & 3 & 3 & 2 & 4 & 4 & 4 & 6 & 6 \\
8 & 10 & 10 & 9 & 9 & 10 & 12 & 12 & 12 & 14 & 14 \\
11 & 216 & 216 & 214 & 214 & 224 & 252 & 252 & 252 & 270 & 270 \\
12 & 521 & 521 & 543 & 543 & 542 & 638 & 638 & 638 & 718 & 718 \\
15 & 6746 & 6746 & 7057 & 7057 & 7006 & 8328 & 8328 & 8328 & 9492 & 9492 \\
16 & 15102 & 15102 & 15671 & 15671 & 15680 & 18500 & 18500 & 18500 & 20884 & 20884 \\
19 & 144268 & 144268 & 149402 & 149402 & 149782 & 176414 & 176414 & 176414 & 198703 & 198703 \\
20 & 293374 & 293374 & 304323 & 304323 & 304596 & 359352 & 359352 & 359352 & 405676 & 405676 \\
23 & 2241422 & 2241422 & 2326161 & 2326161 & 2327256 & 2746666 & 2746666 & 2746666 & 3102368 & 3102368 \\
24 & 4287248 & 4287248 & 4447476 & 4447476 & 4451367 & 5251357 & 5251357 & 5251357 & 5927992 & 5927992 \\
27 & 27742332 & 27742332 & 28775511 & 28775511 & 28804106 & 33976834 & 33976834 & 33976834 & 38348849 & 38348849 \\
28 & 50498270 & 50498270 & 52385258 & 52385258 & 52431245 & 61854317 & 61854317 & 61854317 & 69824744 & 69824744 \\
31 & 286490080 & 286490080 & 297207048 & 297207048 & 297456630 & 350929578 & 350929578 & 350929578 & 396169260 & 396169260 \\
32 & 501453364 & 501453364 & 520191449 & 520191449 & 520647692 & 614220424 & 614220424 & 614220424 & 693367868 & 693367868 \\
35 & 2556221884 & 2556221884 & 2651707883 & 2651707883 & 2654066434 & 3131025718 & 3131025718 & 3131025718 & 3534425359 & 3534425359 \\
36 & 4331022760 & 4331022760 & 4492864127 & 4492864127 & 4496803456 & 5304984880 & 5304984880 & 5304984880 & 5988574304 & 5988574304 \\
      \hline      
\end{tabular}
\caption{Multiplicities, part III.}
\label{mults3}
\end{table}
\end{center}
\newpage

\section{Congruences}\label{AppCong}

\underline{$p=31$:}
\begin{align*}
0&\equiv F_{1A}-F_{31AB} &\pmod{31}
\end{align*}

\underline{$p=19$:}
\begin{align*}
0&\equiv F_{1A}-F_{19ABC} &\pmod{19}
\end{align*}

\underline{$p=11$:}
\begin{align*}
0&\equiv F_{1A}-F_{11A} &\pmod{11}
\end{align*}

\underline{$p=7$:}
\begin{align*}
0&\equiv F_{1A}-F_{7AB} &\pmod{7^3}\\
 &\equiv F_{2A}-F_{14A} &\pmod{7}\\
 &\equiv F_{4AB}-F_{28AB} &\pmod{7}
\end{align*}

\underline{$p=5$:}
\begin{align*}
0&\equiv F_{1A}-F_{5A} &\pmod{5^3}\\
 &\equiv F_{2A}-F_{10A} &\pmod{5}\\
 &\equiv F_{3A}-F_{15AB} &\pmod{5}\\
 &\equiv F_{4AB}-F_{20AB} &\pmod{5}\\
\end{align*}

\underline{$p=3$:}
\begin{align*}
0&\equiv F_{1A}-F_{3A} &\pmod{3^5}\\
 &\equiv F_{2A}-F_{6A} &\pmod{3^2}\\
 &\equiv F_{4AB}-F_{12A} &\pmod{3^2}\\
 &\equiv F_{5A}-F_{15AB} &\pmod{3^2}
\end{align*}

\underline{$p=2$:}
\begin{align*}
0&\equiv F_{1A}+303F_{2A}+3024F_{4AB}+4864F_{8AB}+57344F_{16ABCD} &\pmod{2^{16}}\\
 &\equiv F_{2A}+7F_{4AB}+8F_{8AB}+112F_{16ABCD} &\pmod{2^{7}}\\
 &\equiv F_{3A}+F_{6A}+6F_{12A} &\pmod{2^{3}}\\
 &\equiv F_{4AB}+F_{8AB}+14F_{16ABCD} &\pmod{2^{4}}\\
 &\equiv F_{5A}+F_{10A}+6F_{20AB} &\pmod{2^{3}}\\
 &\equiv F_{6A}+F_{12A} &\pmod{2}\\
 &\equiv F_{7AB}+F_{14AB} &\pmod{2^{3}}\\
 &\equiv F_{8AB}+7F_{16ABCD} &\pmod{2^{3}}\\
 &\equiv F_{10A}+F_{20AB} &\pmod{2}\\
 &\equiv F_{14A}+F_{28AB} &\pmod{2}\\
 \end{align*}

\newpage 
\section{Traces of Singular Moduli}\label{appSing}
We give the explicit descriptions of $F_{[g]}$ in terms of traces of singular moduli and class numbers as described in \Cref{secSing}.

\begin{align*}
F_{1A}&=\calT^{(1)},\\
F_{2A}&=\calT^{(2)}+12\calH^{(1)}-12\calH^{(2)},\\
F_{3A}&=\calT^{(3)}+12\calH^{(1)}-12\calH^{(3)},\\
F_{4AB}&=\calT^{(4)}+12\calH^{(2)}-12\calH^{(4)},\\
F_{5A}&=\calT^{(5)}+6\calH^{(1)}-6\calH^{(5)},\\
F_{6A}&=\calT^{(6)}-12\calH^{(1)}+8\calH^{(2)}+\frac{21}2\calH^{(3)}-\frac{13}2\calH^{(6)},\\
F_{7AB}&=\calT^{(7)}+4\calH^{(1)}-4\calH^{(7)},\\
F_{8AB}&=\calT^{(8)}+4\calH^{(4)}-4\calH^{(8)},\\
F_{10A}&=\calT^{(10)}-6\calH^{(1)}+4\calH^{(2)}+\frac{11}2\calH^{(5)}-\frac{7}2\calH^{(10)},\\
F_{11A}&=\calT^{(11,+)}+\frac{12}5\calH^{(1)}-\frac{6}5\calH^{(11)}-\frac 45 \g^{(11)},\\
F_{12A}&=\calT^{(12)}-4\calH^{(2)}+4\calH^{(4)}+\frac 52\calH^{(6)}-\frac 52\calH^{(12)},\\
F_{14A}&=\calT^{(14,+)}-4\calH^{(1)}+\frac 83 \calH^{(2)}+\frac{15}{4}\calH^{(7)}-\frac{41}{24}\calH^{(14)}+\frac{8}{3} \g^{(14)},\\
F_{15AB}&=\calT^{(15,+)}-3\calH^{(1)}+\frac 94\calH^{(3)}+\frac 52\calH^{(5)}-\frac{13}{8}\calH^{(15)}+\frac 94 \g^{(15)},\\
F_{16ABCD}&=2\calT^{(32,+)}-\calT^{(16)}-2\calH^{(8)}+4\calH^{(16)}-\calH^{(32)},\\
F_{19ABC}&=\calT^{(19,+)}+\frac 43\calH^{(1)}-\frac 23\calH^{(19)}+\frac 43\g^{(19)},\\
F_{20AB}&=\calT^{(20,+)}-2\calH^{(2)}+2\calH^{(4)}+\frac 32\calH^{(10)}-\frac 32\calH^{(20)},\\
F_{28AB}&=\calT^{(28,+)}-\frac{4}{3}\calH^{(2)}+\frac{4}{3}\calH^{(4)}+\frac{25}{24}\calH^{(14)}-\frac{25}{24}\calH^{(28)}+\frac 83\g^{(28)},\\
F_{31AB}&=\calT^{(31,+)}+\frac 45 \calH^{(1)}-\frac 25\calH^{(31)}+\frac 35 \g^{(31)}.
\end{align*}

Here, $\g^{(N)}$ denotes the unique weight $3/2$ cusp form for  $\Gamma_0(4N)$ in the plus space with leading coefficient $1$ if $N<28$, $\g^{(28)}$ is the unique normalized cusp form in $S_{\frac 32}(\Gamma_0(28))$ hit with the $V_4$-operator, and $\g^{(31)}\in S^+_\frac 32(\Gamma_0(124))$ is the unique cusp form in this space satisfying
\[\g^{(31)}(\tau)=q^4+\frac{11}3q^7+O(q^8).\]
There is exactly one newform $f^{(31)}$ in $S^+_\frac 32(\Gamma_0(124))$ up to Galois conjugation and we have
\[f^{(31)}(\tau)=q^4-\frac{1+\sqrt{5}}{2}q^7-\frac{1-\sqrt{5}}{2}q^8+\frac{1+\sqrt{5}}{2}q^{16}+O(q^{20}),\]
so that we can express $g^{(31)}$ in terms of this newform as follows,
\[\g^{(31)}(\tau)=\frac{3+5\sqrt{5}}{6}f^{(31)}(\tau)+\frac{3-5\sqrt{5}}{6}{f^{(31)}}^\sigma(\tau),\]
where, as in \Cref{tbl:HMplus}, a superscript $\sigma$ denotes Galois conjugation.

\end{document}